\newcommand{\Rmnum}[1]{\expandafter\@slowromancap\romannumeral #1@}
\newtheorem{theorem}{Theorem}
\newtheorem*{theorem*}{Theorem}
\newtheorem{definition}{Definition}
\newtheorem{lemma}{Lemma}
\newtheorem*{lemma*}{Lemma}
\newtheorem*{cor*}{Corollary}
\newtheorem{fact}{Fact}
\newcommand{\namedref}[2]{\hyperref[#2]{#1~\ref*{#2}}}
\definecolor{darkred}{rgb}{0.5, 0, 0} 
\definecolor{darkblue}{rgb}{0,0,0.5} 
\newcommand{\verts}[1]{\Vert #1 \Vert}
\newcommand{\projXn}{\ensuremath{{ \Pi_{\mathcal{X}^n} }}\xspace}
\newcommand{\projX}{\ensuremath{{ \Pi_{\mathcal{X}} }}\xspace}
\newcommand{\projXt}{\ensuremath{{ \Pi_{\widetilde{\mathcal{X}}} }}\xspace}
\newcommand{\btheta}{\ensuremath{{ \boldsymbol{\theta} }}\xspace}
\newcommand{\bxi}{\ensuremath{{ \boldsymbol{\xi} }}\xspace}
\newcommand{\blambda}{\ensuremath{{ \boldsymbol{\lambda} }}\xspace}
\newcommand{\be}{\ensuremath{{\bf e}}\xspace}
\newcommand{\bhx}{\ensuremath{\hat{\bf x}}\xspace}
\newcommand{\bg}{\ensuremath{{\bf g}}\xspace}
\newcommand{\bx}{\ensuremath{{\bf x}}\xspace}
\newcommand{\bq}{\ensuremath{{\bf q}}\xspace}
\newcommand{\bp}{\ensuremath{{\bf p}}\xspace}
\newcommand{\btx}{\ensuremath{\widetilde{\bf x}}\xspace}
\newcommand{\blx}{\ensuremath{\overline{\bf x}}\xspace}
\newcommand{\by}{\ensuremath{{\bf y}}\xspace}
\newcommand{\bu}{\ensuremath{{\bf u}}\xspace}
\newcommand{\bv}{\ensuremath{{\bf v}}\xspace}
\newcommand{\C}{\ensuremath{\mathcal{C}}\xspace}
\newcommand{\bbE}{\ensuremath{\mathbb{E}}\xspace}
\newcommand{\cP}{\ensuremath{\mathcal{P}}\xspace}
\newcommand{\calH}{\ensuremath{\mathcal{H}}\xspace}
\newcommand{\cN}{\ensuremath{\mathcal{N}}\xspace}
\newcommand{\cX}{\ensuremath{\mathcal{X}}\xspace}
\newcommand{\trans}{\ensuremath{\mathsf{T}}\xspace}
\renewcommand{\paragraph}[1]{\smallskip\noindent{\bf #1}~}
\definecolor{brightmaroon}{rgb}{0.76, 0.13, 0.28}
\definecolor{ceruleanblue}{rgb}{0.16, 0.32, 0.75}
\definecolor{bluepigment}{rgb}{0.2, 0.2, 0.6}
\definecolor{amaranth}{rgb}{0.9, 0.17, 0.31}
\definecolor{auburn}{rgb}{0.43, 0.21, 0.1}
\definecolor{burntumber}{rgb}{0.54, 0.2, 0.14}
\newtheorem{prop}{Proposition}
\newtheorem*{prop*}{Proposition}
\newtheorem*{fact*}{Fact}
\newtheorem{assumption}{A.\hspace{-0.7mm}}
\begin{document}

\title{Decentralized Multi-Task Stochastic Optimization \\ With Compressed Communications}

\author[*]{Navjot Singh}
\author[$\dagger$]{Xuanyu Cao}
\author[*]{Suhas Diggavi}
\author[$\ddagger$]{Tamer Ba\c{s}ar}

\affil[*]{University of California, Los Angeles, USA}
\affil[*] {\text{navjotsingh@ucla.edu, suhas@ee.ucla.edu}\vspace{0.25cm}}
\affil[$\dagger$]{Hong Kong University of Science and Technology, Clear Water Bay, Hong Kong}
\affil[$\dagger$] {\text{eexcao@ust.hk}\vspace{0.25cm}}
\affil[$\ddagger$] {University of Illinois Urbana-Champaign, Urbana, USA}
\affil[$\ddagger$] {\text{basar1@illinois.edu}}

\date{\vspace{-5ex}}
\maketitle

\begin{abstract}
We consider a multi-agent network where each node has a stochastic (local) cost function that depends on the decision variable of that node and a random variable, and further  the decision variables of neighboring nodes are pairwise constrained. There is an aggregate objective function for the network, composed additively of the expected values of the local cost functions at the nodes, and the overall goal of the network is to obtain the minimizing solution to this aggregate objective function subject to all the pairwise constraints.  This is to be achieved at the nodes level using decentralized information and local computation, with exchanges of only compressed information allowed by neighboring nodes. The paper develops algorithms and obtains performance bounds for two different models of local information availability at the nodes: (i) sample feedback, where each node has direct access to samples of the local random variable to evaluate its local  cost, and (ii) bandit feedback, where samples of the random variables are not available, but only the values of the local cost functions at  two random points close to the decision are available to each node.  For both models, with compressed communication between neighbors, we have developed decentralized saddle-point algorithms that deliver performances no different  (in order sense) from  those without communication compression; specifically, we show that deviation from the global minimum value and violations of the constraints are upper-bounded by $O(T^{-{1\over 2}})$ and $O(T^{-{1\over 4}})$, respectively, where $T$ is the number of iterations. Numerical examples provided in the paper corroborate these bounds and demonstrate the communication efficiency of the proposed method.
\end{abstract}

\section{Introduction}
The emergence of multi-agent networks and the need to distribute computation across different nodes which have access to only piece of the network-wide data but are allowed to exchange information  under some resource constraints, have accelerated research efforts on decentralized and distributed optimization in multiple communities, particularly during the last 10-15 years. Spearheading this activity has been decentralized consensus optimization in static settings, where the goal is to minimize the sum of local cost functions, toward which \cite{NO09a} proposed a decentralized sub-gradient algorithm, whose convergence was further studied in \cite{YLY16}.  Following this initial work, several other consensus algorithms were introduced and studied, including alternating direction method of multipliers (ADMM) \cite{SLYWY14}, exact first-order algorithm \cite{SLWY15},  stochastic consensus optimization \cite{SVKB17, LLZ20}, and online consensus optimization with time-varying cost functions \cite{MC14, AGL17}. 

Consensus modeling framework requires, in essence, all nodes to converge to the same value, which however may not be appropriate in many network scenarios, where different nodes, even neighboring ones, may ultimately end up with different decision (or action) values. Such a scenario arises in, for example, distributed multitask adaptive signal processing, where the weight vectors at neighboring nodes are not the same \cite{CRS14, NRFS16}. One of the first papers that has analyzed such departure from consensus optimization is \cite{KSR17} where the formulation included proximity constraints between neighboring nodes, which were handled through construction of Lagrangians and using saddle-point algorithms, and extended to the asynchronous setting in \cite{BKR19}.

Decentralized algorithms are built on the assumption that there is some exchange of information among the nodes (at least among the neighboring nodes) which then propagates across the network toward achieving the global optimum in the limit. Extensive and frequent exchange of such information is generally practically impossible (due to bandwidth constraints on the edges of the underlying network which constitute the communication links, and computation and storage limitations, among many others), which inherently brings in a restriction on the amount and timing of the exchange of relevant current data. In the literature several studies have addressed these limitations through quantization of information or actions \cite{KBS07, EB16, BEO16, ELB16, ZC16}, by using only sign information on some differences \cite{ZYB19, CB21a}, by controlling the timing of transmissions through event triggering \cite{CB21b, SDGD21}, or by sparsification \cite{AHJKKR18,KSJ19, SCJ18}. Quantization in the context of decentralized optimization (and not consensus problems) has also been studied, with some of the algorithms leading to nonzero errors in convergence (see the early work \cite{RN05, NOOT08}) and others to exact convergence \cite{RMHP19}; see also \cite{AGLTV17} for quantized stochastic optimization. Some recent work has also used error-compensated compression in decentralized optimization, such as \cite{KSJ19, WHHZ18, SDGD20}.

Most of the existing works on decentralized optimization with quantized/compressed communications are, as discussed above,  focused on either {\it consensus} optimization or {\it unconstrained} optimization. Research departing from that trend was initiated in \cite{CB20}, which addressed the problem of  multitask learning (or distributed optimization with pairwise constraints) using quantized communications. More specifically, the model adopted  in that paper (with an underlying network topology) associated with each node a stochastic (local, individual) cost and with each pair of neighbors an inequality
constraint, e.g., proximity constraint.
Note that in such a formulation, different from consensus problems, each node has its own decision variable, but these cannot be picked independently because of the pairwise constraints. Further, the
distribution of the random variable in the stochastic local cost function of each node is unknown
and each node operates based on sequential feedback information,  rendering the formulation distinct from deterministic optimization. The paper developed stochastic saddle-point algorithms
with quantized communications between neighbors, and studied the impact of quantization on the optimization performance. One shortcoming of the result of \cite{CB20} is that the scheme developed led to nonzero convergence error; said differently, the algorithm in that work does not lead to converge to the exact optimal solution, as the number of iterations grow. This is precisely the issue we address in this paper, and achieve exact convergence, by employing a saddle-point algorithm along with an approach based  on error-compensated communication compression. Before further discussing the contents and contributions of this paper, let us point out that saddle-point algorithms (a.k.a. primal-dual algorithms) have been extensively used in the literature on constrained optimization,  such as deterministic  centralized optimization \cite{AHU58, NO09b}, decentralized optimization \cite{CNS14},
stochastic optimization \cite{KSR17, BKR19, EZCLR19}, and online optimization \cite{MJY12, CL19}. 

\subsection{Contributions}

In this paper, we address the problem of decentralized multi-agent stochastic optimization on a network, where each agent has a local stochastic convex cost function and each pair of neighbors is associated with an inequality constraint. The overall goal is to minimize the total (additive) expected cost of all agents subject to all the constraints on all edges, with all computation carried out at the nodes and with information exchanged among the nodes using compressed communication. 
We consider two scenarios of interest based on the sample information available locally at the nodes:
\begin{itemize}
	\item
	{\it Sample Feedback:} In this scenario, each node has access to the local samples of the random variable affecting its local cost function drawn from its distribution at any time instance during the optimization process
	and can thus evaluate its cost function and its gradient.
	\item
	{\it Bandit Feedback:} In this scenario, nodes do not have
	direct access to the samples, but rather can only observe
	values of the corresponding local cost functions at two points sufficiently close to
	the original node parameter. For references on bandit feedback in context of optimization (a.k.a. zeroth-order optimization), see \cite{FKM05, ADX10, DJWW15, S17, LKCTCA18, HHG19}.
\end{itemize}
Under both scenarios, 
the paper develops a decentralized saddle-point algorithm which leads to zero convergence error, even with a \emph{finite} number of bits for each iteration. Note that previous works in this topic \cite{CB20} required the number of bits to be unbounded for the error to diminish. 
	Specifically,  we show,  under some standard assumptions, that the expected suboptimality and the expected constraint violations are upper bounded by $O(T^{-{1\over 2}}) $ and $O(T^{-{1\over 4}})$, respectively, where $T$ is the number of iterations, despite using a finite number of bits. These bounds match, in order sense, the bounds for algorithms without communication compression. Hence, we get near optimal optimization performance even with finite number of bits under both scenarios. The paper also provides results of numerical experiments, which corroborate these bounds.

Accordingly, the main contributions of this paper are:
\begin{itemize}
	\item
	Using finite bit compressed sample feedback, with $T$ being the horizon of the optimization problem, achieving $O(1/\sqrt{T})$ closeness to optimum value of the objective function, and achieving  $O(T^{-{1\over 4}})$ constraint violation---both being the same as in the case without compression.
	\item
	Obtaining the same order bounds with bandit feedback, using only two-point feedback values.
\end{itemize}

\subsection{Paper Organization}

The rest of the paper is organized as follows: Section \ref{sec:prob_form} provides a precise formulation of the problem under consideration. Section \ref{sec:sample_feedback} develops the saddle-point algorithm (Algorithm \ref{algo}) under sample feedback, and provides convergence results and performance bounds (Theorem \ref{thm:opt}) along with essential points of the analyses and proofs, with some details relegated to an appendix (Appendix \ref{sec:app_feedback}). Section \ref{sec:bandit_feedback} presents the counterpart of Section \ref{sec:sample_feedback} for bandit feedback, with the corresponding algorithm (Algorithm \ref{algob}) and corresponding main result on convergence and performance bounds (Theorem \ref{thm:opt_bandit}), with some details relegated to Appendix \ref{sec:app_bandit}. Section \ref{sec:expts} discusses results of some numerical experiments.


\section{Problem Formulation} \label{sec:prob_form}
We first set up the multi-task decentralized optimization problem and establish the notation we use throughout.

We consider an undirected graph $\mathcal{G} = (\mathcal{V},\mathcal{E})$ with $|\mathcal{V}| = n $ participating with a total of $|\mathcal{E}| = m$ connected edges. As $\mathcal{G}$ is undirected, we assume that each connected node pair $(i,j) \in \mathcal{E}$ allows for bi-directional communication from $i$ to $j$ and from $j$ to $i$. The neighbor set of the node $i$ is denoted by $\mathcal{N}_i$.
 
Associated with each node $i \in [n]:= \{ 1,2,\hdots,n\}$ is an unknown data distribution which we denote by $\mathcal{P}_i$. The samples generated from the distribution are denoted by $\xi_i \sim \mathcal{P}_i$ where $\xi_i \in \Xi_i$. Each node also has a local cost function $f_i: \mathcal{X} \times \Xi_i \rightarrow \mathbb{R}^{+}$ which takes as input a sample $\xi_i \in \Xi_i$ and a local parameter $\bx_i \in \mathcal{X} \subset \mathbb{R}^d$ to yield the sample cost $f_i(\bx_i,\xi_i)$. Here, the set $\mathcal{X}$ corresponds to the set of feasible parameters the node can choose from, which is the same across all nodes. As an example, for supervised image recognition tasks, the sample $\xi_i$ for a node $i$ may correspond to an image-label pair with the set $\mathcal{X}$ being the set of all neural networks with a width of 2 layers and $\bx_i$ a particular 2-layer neural network. The local objective $f_i$ in this case may denote a cross-entropy loss function evaluated using the given image-label pair and the neural network. The expected cost for a node $i$ for parameter $\bx_i \in \mathcal{X}$ is denoted by $F_i(\bx_i) = \bbE_{\xi_i \sim \mathcal{P}_i} [ f_i(\bx_i,\xi_i) ]$. In general, we are interested in minimizing the expected cost for all the nodes $i\in [n]$. That is, we are interesting in finding node parameters $\{\bx_i\}_{i=1}^{n}$ that minimize the cost $F(\bx):= \sum_{i=1}^{n} F_i(\bx_i)$ where $F_i(\bx_i)$ denotes the expected cost of the node $i$ evaluated using parameter $\bx_i$ and $\bx \in \mathcal{X}^{n}$ denotes stacking of all the individual node parameters  $\{\bx_i\}_{i=1}^{n}$. Further, we assume that the node parameters are related via pairwise constraints on the connected nodes in the graph. Specifically, for any $i \in [n]$ and $j \in \mathcal{N}_i$, there is a function $g_{ij}: \mathcal{X} \times \mathcal{X} \rightarrow \mathbb{R}$ such that the inequality $g_{ij}(\bx_i,\bx_j) \leq 0$ should be satisfied. This may, for example, encode a proximity constraint on the node parameters by having $g_{ij}(\bx_i,\bx_j) = \Vert \bx_i - \bx_j \Vert_2^2 - c_{ij}$ where $\verts{.}_2$ denotes the $\ell_2$ norm and $c_{ij} \geq 0$ is a constant. In this paper, we assume that the constraint functions $g_{ij}(\bx_i,\bx_j)$ are symmetric in their parameters, i.e., $g_{ij}(\bx_i,\bx_j) = g_{ji}(\bx_j,\bx_i)$ for all $\bx_i,\bx_j \in \mathcal{X}$ and connected node pairs $(i,j) \in \mathcal{E}$, which leads to $m$ number of distinct pairwise constraints for all the parameters. With the notation now in place, we state the learning objective for the multi-task problem can be stated as follows:

\begin{align} \label{eq:main_obj}
	&\min_{\bx \in \mathcal{X}^n} F(\bx) = \sum_{i=1}^{n}\mathbb{E}_{\bxi_i}[f_i(\bx_i, \bxi_i) ] \\
	&\text{subject to} \quad g_{ij}(\bx_i,\bx_j) \leq 0, \quad \forall i \in [n], j \in \mathcal{N}_i \notag
\end{align}

To solve the problem in \eqref{eq:main_obj} in a decentralized manner, the nodes need to communicate during the optimization procedure which can be prohibitive for low bandwidth links or when the exchanged information updates among the nodes are large. To this end, in this paper we consider compression of the information exchanges among the nodes to make the communication efficient. In particular, we employ the notion of the compression operator proposed in \cite{SCJ18}:
\begin{definition} \label{def:comp}
	A (possibly randomized) function $\C: \mathbb{R}^d \rightarrow \mathbb{R}^d$ is called a compression operator, if there exists a positive constant $\omega \in (0,1)$, such that for every $\bx \in \mathbb{R}^d$:
	\begin{align} \label{eq:comp_op}
		\mathbb{E}\Vert \bx - \C(\bx) \Vert_2^2 \leq (1-\omega) \Vert \bx \Vert^2_2
	\end{align}
	where expectation is taken over the randomness of $\C$. We assume $\C(\mathbf{0}) = \mathbf{0}$.
\end{definition}
Many important sparsifiers and quantizers in the literature satisfy the above definition, with some of the major ones being:\\
{\sf(i)} $Top_k$ and $Rand_k$ sparsifiers \cite{SCJ18}, {\sf (ii)} Stochastic quantizer QSGD \cite{AGLTV17}, {\sf (iii)} The scaled $Sign$ quantizer \cite{KQSJ19}, and {\sf (iv)} composed quantization and sparsification operators in \cite{BDKD19}.

We consider two scenarios of interest based on the sampled information available locally at the nodes:
\begin{enumerate}[(i)]
	\item Sample Feedback: In this scenario we assume that each node $i$ has access to the local samples $\xi_i$ drawn from $\mathcal{P}_i$ at any time instance during the optimization procedure and can thus evaluate the cost function and its derivative.
	\item Bandit Feedback: In this scenario, nodes do not have a direct access to the samples, but rather can only observe values of the local cost function at two perturbations from the original node parameter. 
\end{enumerate}
We focus on these scenarios separately in Section \ref{sec:sample_feedback} and Section \ref{sec:bandit_feedback} respectively, where for each, we develop a compressed decentralized algorithm for optimizing \eqref{eq:main_obj} and present our theoretical convergence results for the developed schemes.

\section{Decentralized compressed optimization with Sample feedback} \label{sec:sample_feedback}
In this section we describe our approach for optimizing the objective in \eqref{eq:main_obj} for the case of sample feedback. In this setting, each node $i \in [n]$ has access to the sampled instance $\xi_i$ at any stage of the optimization procedure, and thus can evaluate the local objective $f_i(\bx_i,\xi_i)$ based on its local parameter $\bx_i$.

\subsection{Algorithm: with Sample Feedback}
We develop a stochastic saddle-point algorithm for solving \eqref{eq:main_obj} in a decentralized manner with compressed parameter exchanges. Our proposed scheme is presented in Algorithm~\ref{algo} and is based on finding a saddle point of the modified Lagrangian for the optimization problem in \eqref{eq:main_obj}. For a given sample $\xi_i$, we define this modified Lagrangian as follows:
\begin{align} \label{eq:sample_lagrangain}
	\mathcal{L}( \bx,\blambda ) = \sum_{i=1}^{n} \Big[f_i(\bx_i, \xi_i)  
	+ \sum_{j \in \mathcal{N}_i} \Big( \lambda_{ij} g_{ij}(\bx_i,\bx_j) - \frac{\delta \eta}{2} \lambda_{ij}^2\Big)  \Big]
\end{align}
On the L.H.S. of \eqref{eq:sample_lagrangain}, $\bx$ denotes the concatenation of all the model parameters $\{\bx_i\}_{i=1}^{n}$, each of which is in $\mathbb{R}^d$, leading to $\bx \in \mathbb{R}^{nd}$. For $i \in [n]$ and $j \in \mathcal{N}_i$, $\lambda_{ij} \geq 0$ denotes the Lagrangian multiplier associated with the constraint $g_{ij} (\bx_i, \bx_j) \leq 0$. Similarly, $\blambda$ on the L.H.S. denotes the concatenation of all $\lambda_{ij}$ for $i \in [n]$ and $j \in \mathcal{N}_i$, thus $\blambda \in \mathbb{R}^{m}$, where $m$ is twice the number of edges in the underlying undirected graph. The last term on the R.H.S. of \eqref{eq:sample_lagrangain} corresponds to a regularizer which mitigates the growth of the Lagrangian multiplier $\blambda$ during the saddle-point algorithm updates. In this term, $\eta > 0$ corresponds to the learning rate of the algorithm and $\delta >0$ is a control parameter.

To find the saddle point of the Lagrangian in \eqref{eq:sample_lagrangain}, we utilize alternating gradient updates of the primal variables concatenated in $\bx$, and the dual variables in $\blambda$. For any $i \in [n]$, the gradient of the modified Lagrangian with respect to (w.r.t.) the model parameter $\bx_i$ is given by:
\begin{align} \label{eq:sample_grad_primal}
	\nabla_{\bx_i}  \mathcal{L}(\bx,\blambda) &=  \sum_{j \in \mathcal{N}_i} \left[\lambda_{ij} \nabla_{\bx_i} g_{ij}(\bx_i,\bx_j) + \lambda_{ji} \nabla_{\bx_i} g_{ij}(\bx_j, \bx_i) \right]  + \nabla_{\bx_i} f_i(\bx_i, \xi_i) 
\end{align}
The gradient w.r.t. the Lagrangian multiplier $\lambda_{ij}$ for $i\in[n]$, $j \in \mathcal{N}_i$ is similarly given by:
\begin{align} \label{eq:sample_grad_dual}
	\frac{\partial}{\partial \lambda_{ij} } \mathcal{L}(\bx,\blambda) = g_{ij}(\bx_i,\bx_j) - \delta \eta \lambda_{ij}
\end{align}
The stochastic algorithm developed for updating the primal and dual variables via equations \eqref{eq:sample_grad_primal} and \eqref{eq:sample_grad_dual} is presented in Algorithm \ref{algo}, which is described below.

\begin{algorithm}[htp!]
	\caption{Compressed Decentralized Optimization with Sample Feedback}
	{\bf Initialize:} Random raw parameters $\btx_i^{(1)} \in \mathcal{X}$ individually, $\lambda_{ij}^{(1)} = 0$ for each $i \in [n]$, $j \in \mathcal{N}_i$, $\bhx_i^{(0)}=\mathbf{0}$ for each $i \in [n]$, number of iterations $T$, learning rate $\eta$, parameter $\delta >0$.  \\
	(Communicate in the first iteration without compression to ensure that $\btx^{(1)} = \bhx^{(1)}$ )
	\vspace{0cm}
	\begin{algorithmic}[1] \label{algo}
		\FOR{$t=1$ \textbf{to} $T$ in parallel for $i \in [n]$}
			\STATE Compute $\bq_i^{(t)} = \C ( \btx_i^{(t)} - \bhx_i^{(t-1)} ) $	
			\FOR{nodes $k \in \mathcal{N}_i \cup \{i\}$} 	
						\STATE Send $\bq_i^{(t)}$ and receive $\bq_k^{(t)}$
						\STATE Update $\bhx_k^{(t)} = \bhx_k^{(t-1)} + \bq_k^{(t)} $ 
						\STATE Compute $\bx_k^{(t)} = \projX (\bhx_k^{(t)}) $
			\ENDFOR
			\STATE Update running average for local parameter: \\
			$ \blx_{i,avg}^{(t)} = \frac{1}{t} \bx_i^{(t)} + \frac{t-1}{t} \blx_{i,avg}^{(t-1)} $
			\STATE Sample $\bxi_i^{(t)} \sim \mathcal{P}_i$ and compute $\nabla_{\bx_i} f_i(\bx_i^{(t)} ,\bxi_i^{(t)} )$
			\STATE For all $j \in \mathcal{N}_i$ compute $\nabla_{\bx_i} g_{ij}(\bx_i^{(t)}, \bx_j^{(t)})$
			\STATE Update the primal variable by gradient descent:
			\begin{align*}
				&\btx_i^{(t+1)}  = \projX \left( \btx_i^{(t)}  - \eta  \nabla_{\bx_i} f_i(\bx_i^{(t)} ,\bxi_i^{(t)} )  - 2 \eta \displaystyle \sum_{j \in \mathcal{N}_i} \lambda_{ij}^{(t)} \nabla_{\bx_i} g_{ij}(\bx_i^{(t)}, \bx_j^{(t)} )    \right)
			\end{align*}
			\STATE For $j \in \mathcal{N}_i$, update the dual variables through gradient ascent:
			\begin{align*}
				& \lambda_{ij}^{(t+1)} = \left[ \lambda_{ij}^{(t)} + \eta \left( g_{ij}(\bx_i^{(t)} , \bx_j^{(t)} ) - \delta \eta \lambda_{ij}^{(t)}  \right)  \right]^{+} 
			\end{align*}
		\ENDFOR
	\end{algorithmic}
	{\bf Output:} Time averaged parameters $\blx_{i,avg}^{(T)}$ for all $i \in [n]$.
\end{algorithm}

Our proposed scheme in Algorithm \ref{algo} is a stochastic saddle-point algorithm to minimize the objective \eqref{eq:main_obj} by finding a saddle point of the modified Lagrangian in \eqref{eq:sample_lagrangain} in a communication efficient manner. In particular, each node is allowed to exchange with its neighboring nodes only compressed parameters, via the compression operator in \eqref{eq:comp_op}. To realize exchange of compressed parameters between workers, for each node $i \in [n]$ and its associated \emph{raw} parameter $\btx_i$, all nodes $j \in \mathcal{N}_i$ maintain an estimate $\bhx_i$ of $\btx_i$, so, each node $i \in [n]$ has access to $\bhx_j$ for all $j \in \mathcal{N}_i$. The parameter $\btx_i$ is called raw as it corresponds to the model parameter before any compression in our algorithm. We refer to $\bhx_i$ as the \emph{copy} parameter of the node $i$.

We first initialize the regularization parameter $\delta$ (see Theorem \ref{thm:opt} for definition) and learning rate $\eta$. We initialize the parameter copies of all the nodes as $\bhx_i = \mathbf{0}$ for all $i \in [n]$ and allow each node to communicate with its neighbors in the first round without any compression. This is to ensure that $\btx_i^{(1)} = \bhx_i^{(1)}$ for all the nodes (this is a requirement to control the error encountered via compression, c.f. Lemma \ref{lem:e_t}). At any time step $t \in [T]$ of the algorithm, node $i$ first computes the compressed update to its copy parameter, given by $\bq_i^{(t)}$ (line 2) and then sends and receives these copy parameter updates from its neighbor nodes in $\mathcal{N}_i$ (line 3). Importantly, these copy parameter updates are compressed using the operator in \eqref{eq:comp_op}, and thus the communication is efficient. After receiving the copy updates from its neighbors, each node updates the locally available copy parameters of its neighbors and its own copy parameters (line 5) and ensures that the copy parameters lie in the set $\mathcal{X}$ by taking a projection to form the local node parameter $\bx_i^{(t)}$ (line 6). As the node $i$ has access to the updated copy parameters of its neighbors, it also has access to $\bx_j^{(t)}$ for all $j \in \mathcal{N}_i$. With the local node parameter evaluated, the node can now update its running average of parameters (line 8). 

For the stochastic saddle-point update steps at time $t$, in the case of sample feedback, the node can sample datapoint $\bxi_i^{(t)}$ and evaluate the gradient using the previously computed node parameter $\bx_i^{(t)}$ (line 9). Since node $i$ also has access to the parameters $\bx_j^{(t)}$ for its neighbors $j \in \mathcal{N}_i$, it can compute the gradient w.r.t. the pairwise constraint function $g_{ij}$ evaluated at $\bx_i^{(t)},\bx_j^{(t)}$ (line 10). Thus, the node can evaluate the gradient of the modified Lagrangian w.r.t. the primal local node parameters as in \eqref{eq:sample_grad_primal} and take a gradient descent step to update the raw node parameter $\btx_i^{(t)}$. Similarly, the dual variables $\lambda_{ij}^{(t)}$ are also updated via a gradient ascent step (line 12) similar to the update in \eqref{eq:sample_grad_dual} and then projected on the positive real space.

\paragraph{Symmetry of dual updates:}
Note that the derived expression for the gradient $\nabla_{\bx_i}  \mathcal{L}(\bx,\blambda)$, consists of the dual parameters $\lambda_{ij}$ and $\lambda_{ji}$. Meanwhile, the update in line 11 of Algorithm~\ref{algo} considers these parameters to be the same for all time $t \in [T]$. We describe the reasoning behind this update in the following induction argument. The dual variables are initialized to 0, that is, $\lambda_{ij}^{(1)} = 0$ for all $i \in [n]$ and $j \in \mathcal{N}_i$. Thus for any connected nodes $i,j$, for $t=1$, the condition $\lambda_{ij}^{(t)} = \lambda_{ji}^{(t)}$ holds. Next, we assume that for any arbitrary $\tau \in [T]$, $\tau \neq 1$, it is the case that $\lambda_{ij}^{(\tau)} = \lambda_{ji}^{(\tau)}$. Thus for the time step $t = \tau + 1$, by the update given in line 12 of Algorithm \ref{algo}, we have:
\begin{align*}
	\lambda_{ij}^{(\tau+1)} &= \left[ \lambda_{ij}^{(\tau)} + \eta \left( g_{ij}(\bx_i^{(\tau)} , \bx_j^{(\tau)} ) - \delta \eta \lambda_{ij}^{(\tau)}  \right)  \right]^{+} 
	&\stackrel{(a)}{=} \left[ \lambda_{ji}^{(\tau)} + \eta \left( g_{ji}(\bx_j^{(\tau)} , \bx_i^{(\tau)} ) - \delta \eta \lambda_{ji}^{(\tau)}  \right)  \right]^{+} 
	&= \lambda_{ji}^{(\tau+1)}
\end{align*}
where (a) follows from the fact that $\lambda_{ij}^{(\tau)} = \lambda_{ji}^{(\tau)}$ and the symmetry of the pairwise constraints $g_{ij}$ for connected nodes $i,j$. Thus, as the induction step holds for arbitrary $\tau \in [T]$ and for the base case $t =1$, it follows that $\lambda_{ij}^{(t)} = \lambda_{ji}^{(t)}$ for all $t \in [T]$ for all $i \in[n], j\in \mathcal{N}_i$.

\paragraph{Justification for raw parameter updates:}
Note that in the steps given in lines (9-11) in Algorithm \ref{algo}, the gradients are evaluated at the node parameters $\{\bx_i^{(t)}\}_{i=1}^{n}$, while the updates are made to the raw parameters $\{\btx_i^{(t)}\}_{i=1}^{n}$ via gradient descent. The reason for this is that in our scheme, the raw parameters effectively play the role of a \emph{virtual} parameter, which mimic SGD-like updates (c.f. line 11), with the gradients evaluated at a different (perturbed) parameter. The notion of such virtual parameters to analyze convergence has been promising lately in stochastic optimization within the perturbed iterate analysis framework, see \cite{MPPRRJ17,SDGD21, SCJ18, BDKD19}. The key idea to analyze convergence in such settings is to control the difference of the iterates $\verts{\bx_i^{(t)} - \btx_i^{(t)}}_2$ for all $i \in [n]$. Controlling this difference is one key contribution of our work, c.f. Lemma \ref{lem:e_t}.


\subsection{Main Result: Sample Feedback}
\allowdisplaybreaks
We now present our theoretical result on the convergence rate of Algorithm \ref{algo} for decentralized optimization for the case with sample feedback. We first present and discuss the set of assumptions our result is based on.

\begin{assumption}\label{assump:X}
		The set of admissible model parameters $\mathcal{X}$, is closed, convex and bounded, i.e., there exists a constant $R>0$ such that $\verts{\btx}_2 \leq \frac{R}{\sqrt{n}} $, for all  $\btx \in  \cX$ 
		
	\end{assumption}
	
	\begin{assumption}\label{assump:convexity}
		For any $i \in [n]$, the local objective $f_i(\bx_i,\bxi_i)$ is convex in $\bx_i$ for any $\bxi_i \in \Xi_i$. The pairwise constraint function $g_{ij}(\bx_i,\bx_j)$ is (jointly) convex in $\bx_i$ and $\bx_j$, for any pair $i\in[n],j\in\cN_i$.
	\end{assumption}
	
	\begin{assumption}\label{assump:gradient}
		For any $i\in[n]$, there exists a constant $G_i>0$ such that $\forall \bx_i\in \cX$:
		\begin{align} \label{assump:gradient_func}
			\bbE_{\bxi_i\sim \cP_i} \left[\| \nabla_{\bx_i}f_i(\bx_i,\bxi_i) \|_2^2\right]\leq G_i^2.
		\end{align}
		To simplify the notation, we also define $G := \sqrt{\sum_{i=1}^{n} G_i^2 } $.	
		Additionally, for any $i\in[n],j\in \cN_i$, we assume that there exists a constant $G_{ij}>0$ such that $\forall \bx_i,\bx_j\in \cX$:
		\begin{align} \label{assump:gradient_constraint}
			\left\|\left[\nabla_{\bx_i}g_{ij}(\bx_i,\bx_j)^{\trans},\nabla_{\bx_j}g_{ij}(\bx_i,\bx_j)^{\trans}\right]^{\trans}\right\|_2 \leq G_{ij}.
		\end{align}
		We define $\tilde{G}  := \max_{i \in [n], j \in \mathcal{N}_i } G_{ij}$
	\end{assumption}
	
	\begin{assumption}\label{assump:g}
		For any $i\in[n],j\in \cN_i$, the pairwise constraint function $g_{ij}$ is bounded. That is, there exists a constant $C_{ij}>0$ such that $|g_{ij}(\bx_i,\bx_j)|\leq C_{ij}$, $\forall\bx_i,\bx_j\in \cX$. We further define $C^2 :=  \sqrt{ \sum_{i=1}^{n}  \sum_{j \in \mathcal{N}_i}  C^2_{ij} }$ to simplify the results presented below.
\end{assumption}

We remark that Assumptions A.\ref{assump:X}-A.\ref{assump:g} are frequently used in convergence rate analysis of convex optimization algorithms, even without compression. The assumption on the boundedness of the parameter space $\mathcal{X}$ and the boundedness of the constraint functions have been made earlier in \cite{CB20,CLG17}. The assumption on boundedness of the gradient of the objectives (equation \eqref{assump:gradient_func}) has also been made earlier in \cite{CB20, NO09a, CLG17} and boundedness of gradients of the constraint functions (equation \eqref{assump:gradient_constraint}) have been assumed in \cite{CB20,YN17,NN13}\footnote{The bounded gradient assumption for compressed decentralized optimization has been relaxed in one of our previous works \cite{SDGD21}. We remark that the argument for relaxing this assumption can similarly be extended to the analysis in this paper, a technicality which we omit, in interest of keeping the analysis relatively cleaner, and to focus on the main novelty of analyzing compressed communication in the pairwise multi-task setting.}.

With these assumptions in place, we now present our main theoretical result in Theorem \ref{thm:opt} below for the convergence rate of Algorithm \ref{algo}.
The result is stated in terms of the stacked vector $\bx$, which corresponds to the concatenation of the parameters $\{\bx_i\}_{i=1}^{n}$, and thus is $n \times d$ dimensional. The vector $\bx^{*}$ represents the stacked optimal parameters which is the solution of the optimization problem \eqref{eq:main_obj}.
 The proof details for Theorem~\ref{thm:opt} are presented in Section \ref{subsec:proof_sample}.
\begin{theorem} \label{thm:opt}
	Consider running Algorithm \ref{algo} for $T$ iterations with fixed step size $\eta = \frac{a}{\sqrt{T}}$ for positive constant $a$ and regularization parameter $\delta$ taking values in $\left[ \frac{1 - \sqrt{1 - \frac{64 \eta^2 (1+m)\tilde{G}^2}{\omega^2} }}{4 \eta^2}, \frac{1 + \sqrt{1 - \frac{64 \eta^2 (1+m)\tilde{G}^2}{\omega^2}}}{4 \eta^2} \right]$  where $\omega \in (0,1)$ is the compression factor. Then, under assumptions A.\ref{assump:X} - A.\ref{assump:g}, for $T \geq \frac{64a^2 (1+m)\tilde{G}^2}{\omega^2}$, the expected value of $F$ evaluated at the stacked time-averaged vector $\blx^{(T)}_{avg} := \frac{1}{T} \sum_{t=1}^{T}\bx^{(t)} $ satisfies:
	\begin{align} \label{eq:thm_1}
		\mathbb{E}[F(\blx^{(T)}_{avg})] {-}   F(\bx^{*})  \leq \frac{2R^2}{a \sqrt{T}} + \frac{a}{\sqrt{T}} \left( \frac{4}{\omega^2}(1{+}m)G^2 {+} 2C^2 \right) 
	\end{align} 
	For any $i\in [n]$, $j \in \mathcal{N}_i$, the constraint function $g_{ij}$ satisfies:
	\begin{align} \label{eq:thm_2}
		\mathbb{E} \left[ g_{ij}(\blx_{i,avg}^{(T)} , \blx_{j,avg}^{(T)})  \right]  
		& \leq \frac{1}{T^{\frac{1}{4}}} \left( \sqrt{\frac{8GR}{a}} + \sqrt{8\delta aGR}  \right)   +  \frac{1}{\sqrt{T}} \sqrt{2\left(  2R^2\delta + \frac{4}{\omega^2}(1+m)G^2 + C^2  \right)} \notag \\
		& \qquad  + \frac{1}{\sqrt{T}} \sqrt{2 \delta a^2  \left( \frac{4}{\omega^2} (1+m)G^2 + C^2 \right)} + \frac{2R}{a \sqrt{T}}
	\end{align}
	where the $d$-dimensional vector $\blx_{k,avg}^{(T)}$ denotes the time averaged parameter for node $k \in [n]$ in $\blx_{avg}^{(T)}$.
\end{theorem}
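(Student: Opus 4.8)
The plan is to run a stochastic primal--dual (saddle-point) regret analysis on the modified Lagrangian $\mathcal{L}(\bx,\blambda)$ of \eqref{eq:sample_lagrangain}, adapted to the perturbed-iterate setting forced by compression. First I would establish a one-step primal inequality. Writing $\bg_i^{(t)}$ for the stochastic primal gradient of the Lagrangian evaluated at the copy parameters $\bx^{(t)}$ (here the symmetry $\lambda_{ij}^{(t)}=\lambda_{ji}^{(t)}$ together with the symmetry of $g_{ij}$ collapses the two dual terms of \eqref{eq:sample_grad_primal} into the single factor $2$ appearing in line 11), non-expansiveness of $\projX$ and $\bx_i^{*}\in\mathcal{X}$ give
\[
\verts{\btx_i^{(t+1)}-\bx_i^{*}}_2^2 \le \verts{\btx_i^{(t)}-\bx_i^{*}}_2^2 - 2\eta\langle \bg_i^{(t)}, \btx_i^{(t)}-\bx_i^{*}\rangle + \eta^2\verts{\bg_i^{(t)}}_2^2 .
\]
The crucial manoeuvre is to replace $\btx_i^{(t)}$ by the copy parameter $\bx_i^{(t)}$ inside the inner product, peeling off the error $\langle \bg_i^{(t)}, \btx_i^{(t)}-\bx_i^{(t)}\rangle$ which is bounded by $\verts{\bg_i^{(t)}}_2\verts{\btx_i^{(t)}-\bx_i^{(t)}}_2$; this is exactly the quantity Lemma \ref{lem:e_t} is built to control. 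Convexity of $f_i$ and $g_{ij}$ (Assumption A.\ref{assump:convexity}) then lower-bounds $\langle \bg_i^{(t)},\bx_i^{(t)}-\bx_i^{*}\rangle$ by the primal Lagrangian gap, while Assumption A.\ref{assump:gradient} bounds $\verts{\bg_i^{(t)}}_2^2$ by a $(1+m)$-multiple of the gradient constants, the factor $(1+m)$ absorbing the sum over neighbors and dual weights.

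Next I would derive the matching one-step dual inequality from the projected ascent of line 12: for any fixed $\lambda_{ij}\ge 0$,
\[
(\lambda_{ij}^{(t+1)}-\lambda_{ij})^2 \le (\lambda_{ij}^{(t)}-\lambda_{ij})^2 - 2\eta\big(g_{ij}(\bx_i^{(t)},\bx_j^{(t)})-\delta\eta\lambda_{ij}^{(t)}\big)(\lambda_{ij}^{(t)}-\lambda_{ij}) + \eta^2\big(g_{ij}^{(t)}-\delta\eta\lambda_{ij}^{(t)}\big)^2 ,
\]
and use concavity of $\mathcal{L}$ in $\blambda$ to convert the cross term into the dual Lagrangian gap. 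Summing primal and dual inequalities over all $i\in[n]$, $j\in\mathcal{N}_i$, telescoping over $t=1,\dots,T$ (the dual distance contributes $\verts{\blambda}_2^2/2\eta$ since $\blambda^{(1)}=\mathbf{0}$, and the primal distance contributes at most $2R^2/\eta$ by Assumption A.\ref{assump:X}), and taking expectation over both the sampling and compression randomness so that stochastic gradients become true Lagrangian gradients, yields
\[
\sum_{t=1}^{T}\mathbb{E}\big[\mathcal{L}(\bx^{(t)},\blambda)-\mathcal{L}(\bx^{*},\blambda^{(t)})\big] \le \frac{2R^2}{\eta}+\frac{\verts{\blambda}_2^2}{2\eta}+\frac{\eta T}{2}\,(\text{gradient terms})+\sum_{t}\mathbb{E}\big[\verts{\bg^{(t)}}_2\,\verts{\btx^{(t)}-\bx^{(t)}}_2\big].
\]
At this stage I would invoke Lemma \ref{lem:e_t} to bound the accumulated compression-error sum; this is precisely where the $1/\omega^2$ factor and the admissible interval for $\delta$ enter, since $\delta$ must be tuned so that the error recursion is contractive, and the hypothesis $T\ge 64a^2(1+m)\tilde{G}^2/\omega^2$ is exactly what makes the discriminant $1-64\eta^2(1+m)\tilde{G}^2/\omega^2$ defining that interval nonnegative.

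Finally I would specialize $\blambda$. For the optimality gap I set $\blambda=\mathbf{0}$, so $\mathcal{L}(\bx^{(t)},\mathbf{0})=F(\bx^{(t)})$, while feasibility of $\bx^{*}$ and $\lambda_{ij}^{(t)}\ge 0$ force $\mathcal{L}(\bx^{*},\blambda^{(t)})\le F(\bx^{*})$; dividing by $T$ and applying Jensen's inequality via convexity of $F$ (so that $F(\blx_{avg}^{(T)})\le\frac1T\sum_t F(\bx^{(t)})$) produces \eqref{eq:thm_1} after substituting $\eta=a/\sqrt{T}$. For the constraint violation I instead take $\blambda$ supported on the single edge $(i,j)$ with free magnitude $\mu$, apply Jensen to $g_{ij}$ together with a matching lower bound on the optimality gap, and then optimize over $\mu$; balancing the penalty $\mu^2/(2\eta T)$ against the linear gain $\mu\,g_{ij}$ is what forces $\mu\sim T^{1/4}$ and produces the dominant $T^{-1/4}$ term of \eqref{eq:thm_2}. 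I expect the main obstacle to be the compression-error control of Lemma \ref{lem:e_t} and its coupling with the dual regularization: making the error recursion contract while keeping $\delta$ weak enough not to bias the saddle point is the delicate balance that simultaneously pins down the interval for $\delta$ and the threshold on $T$.
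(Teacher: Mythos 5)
Your route is essentially the paper's: a perturbed-iterate primal--dual regret bound (the paper's Lemma~\ref{lem:proof_sample_init}), Lemma~\ref{lem:e_t} for the accumulated compression error, a quadratic-in-$\delta$ cancellation giving the stated interval and threshold on $T$, and Fact~\ref{fact:F_diff_bound} for the constraint-violation bound; your single-edge comparator $\blambda = \mu\,e_{(i,j)}$ optimized over $\mu$ is equivalent to the paper's explicit choice $\blambda = \bigl[\mathbb{E}\sum_{t}\bg(\bx^{(t)})\bigr]^{+}/(\delta\eta T + \tfrac{1}{\eta})$ restricted to one coordinate, and recovers the same quadratic term in \eqref{eq:proof_9}. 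However, there is one genuine gap in your account of where $\delta$ acts. You say $\delta$ ``must be tuned so that the error recursion is contractive,'' but the recursion in Lemma~\ref{lem:e_t} contracts with factor $\bigl(1-\tfrac{\omega}{2}\bigr)$ from the compression operator alone; $\delta$ plays no role there. The actual mechanism is this: expanding $-\mathcal{L}(\bx^{*},\blambda^{(t)})$ on the left-hand side produces the \emph{positive} term $+\tfrac{\delta\eta}{2}\sum_{t}\verts{\blambda^{(t)}}^2$, and this must be retained and moved to the right to form the coefficient $\eta\bigl(\tfrac{4}{\omega^2}(1+m)\tilde{G}^2 + \delta^2\eta^2 - \tfrac{\delta}{2}\bigr)$ multiplying $\sum_{t}\mathbb{E}\verts{\blambda^{(t)}}^2$, where the $\tfrac{4}{\omega^2}$ amplification comes from pushing the gradient bounds of Fact~\ref{fact:grad_lagrangian_bound} through the geometric sum of Lemma~\ref{lem:e_t}. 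The interval for $\delta$ and the condition $T \geq \tfrac{64a^2(1+m)\tilde{G}^2}{\omega^2}$ are exactly what make that coefficient nonpositive. Your specialization step bounds $\mathcal{L}(\bx^{*},\blambda^{(t)}) \leq F(\bx^{*})$, which discards both $\langle\blambda^{(t)},\bg(\bx^{*})\rangle \leq 0$ \emph{and} the regularizer $-\tfrac{\delta\eta}{2}\verts{\blambda^{(t)}}^2$ in one stroke; if that is done before the cancellation, the $\sum_{t}\mathbb{E}\verts{\blambda^{(t)}}^2$ accumulation on the right is uncontrolled --- no uniform bound on $\verts{\blambda^{(t)}}$ of the required order is available in the sample-feedback analysis --- and the argument stalls. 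You must absorb the regularizer term first and only afterwards drop $-\mathbb{E}\sum_{t}\langle\blambda^{(t)},\bg(\bx^{*})\rangle \geq 0$ by feasibility.

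A second, minor mismatch: your Cauchy--Schwarz bound $\langle\bg^{(t)},\btx^{(t)}-\bx^{(t)}\rangle \leq \verts{\bg^{(t)}}\,\verts{\btx^{(t)}-\bx^{(t)}}$ leaves a first-moment product, whereas Lemma~\ref{lem:e_t} controls the second moment $\mathbb{E}\verts{\be^{(t)}}^2$. You need the additional Young step $\verts{\bg^{(t)}}\verts{\be^{(t)}} \leq \tfrac{\eta}{2}\verts{\bg^{(t)}}^2 + \tfrac{1}{2\eta}\verts{\be^{(t)}}^2$ (this is what the paper does inside the proof of Lemma~\ref{lem:proof_sample_init}) so that the error enters as $\tfrac{1}{2\eta}\sum_{t}\mathbb{E}\verts{\be^{(t)}}^2$, which the double-sum swap and geometric series then convert into the $\tfrac{2\eta}{\omega^2}\sum_{t}\mathbb{E}\verts{\nabla_{\bx}\mathcal{L}_t(\bx^{(t)},\blambda^{(t)})}^2$ contribution. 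With these two repairs, your specialization $\blambda=\mathbf{0}$ for \eqref{eq:thm_1}, the Jensen steps, and the $\mu \sim T^{1/4}$ balancing for \eqref{eq:thm_2} all go through exactly as in the paper.
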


Theorem \ref{thm:opt} establishes that for any given compression requirement $\omega \in (0,1)$, the suboptimality of the objective, $\bbE [F(\blx^{(T)})] - F(\bx^{*})$, is $\mathcal{O}\left(\frac{1}{T^{\nicefrac{1}{2}}}\right)$, and the expected constraint violation $\mathbb{E} [ g_{ij}(\blx_{i}^{(T)} , \blx_{j}^{(T)})  ]$ for any connected node pair $(i,j)$ is $\mathcal{O}\left(\frac{1}{T^{\nicefrac{1}{4}}}\right)$. Thus, the difference between the attained objective and the global minimum of \eqref{eq:main_obj}, as well as the constraint violations can be made arbitrarily small by increasing the number of iterations the algorithm is run for.

\subsection{Convergence analysis} \label{subsec:proof_sample}
We now present the main proof details of Theorem \ref{thm:opt} which establishes a convergence rate of Algorithm \ref{algo}.

\subsubsection{Preliminaries} \label{subsubsec:sample_prelim}
We first introduce a compact vector notation which we will use throughout the proof. Consider the stacked (concatenated) vector of the node parameter vectors $\{\bx_i\}_{i=1}^{n}$ which we denote by $\bx$, and thus is $nd$-dimensional. Similarly, we define the vector $\blambda$ of size $m$ which stacks together the dual variables $\lambda_{ij}$ for $i \in [n]$ and $j \in \mathcal{N}_i$. The vector $\bg(\bx)$ represents the the stacked vector of constraint values $g_{ij}(\bx_j,\bx_j)$, and is also $m$-dimensional. Finally, $\bxi$ denotes the concatenated vector of samples across the nodes. The projection $\projXn(\bx)$ refers to projection of $\bx$ on the space $\mathcal{X}^n$ where each individual node parameter comprising $\bx$ is projected onto $\mathcal{X}$. Under this compact notation, the modified Lagrangian presented in \eqref{eq:sample_lagrangain} can be re-written as:
\begin{align} \label{eq:lagrangian}
	\mathcal{L}(\bx,\blambda) = f(\bx,\bxi)+ \blambda^T\bg(\bx) - \frac{\delta \eta}{2}\Vert \blambda \Vert^2
\end{align}
We now present a few auxiliary results which we use through the course of the proof. Some of these can be derived from the assumptions made in A.\ref{assump:X}-A.\ref{assump:g}.

\begin{fact} \label{fact:proj_bound}
	Suppose $\mathcal{A} \subset \mathbb{R}^l$ is closed and convex. Then, for any $\by \in \mathbb{R}^l$ and $\bx \in \mathcal{A}$, we have:
	\begin{align*}
		\verts{\bx - \Pi_{\mathcal{A}}(\by)}_2 \leq \verts{\bx - \by}_2 
	\end{align*}
	where $\Pi_{\mathcal{A}}(\by)$ denotes the projection of $\by$ on the set $\mathcal{A}$. 
\end{fact}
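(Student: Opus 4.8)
The plan is to invoke the standard variational (obtuse-angle) characterization of the Euclidean projection onto a closed convex set, and then expand a single squared norm. First I would record that, since $\mathcal{A}$ is closed and convex and the map $\bz \mapsto \verts{\bz - \by}_2^2$ is strictly convex and coercive, the projection $\bp := \Pi_{\mathcal{A}}(\by)$ exists and is the unique minimizer of $\verts{\bz - \by}_2$ over $\bz \in \mathcal{A}$. This fixes $\bp$ as a well-defined point of $\mathcal{A}$ against which the rest of the argument can proceed.

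Next I would establish the key variational inequality: for every $\bx \in \mathcal{A}$,
\[
	\langle \by - \bp, \bx - \bp \rangle \leq 0 .
\]
To prove it I would use convexity of $\mathcal{A}$ directly: for any $\bx \in \mathcal{A}$ and $t \in [0,1]$, the point $\bp + t(\bx - \bp)$ lies in $\mathcal{A}$, so the scalar function $\phi(t) := \verts{\bp + t(\bx - \bp) - \by}_2^2$ attains its minimum over $[0,1]$ at $t = 0$. Differentiating and evaluating at the endpoint gives $\phi'(0) = 2\langle \bp - \by, \bx - \bp\rangle \geq 0$, which is exactly the claimed inequality after moving the sign across.

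Finally I would expand, using $\bx - \by = (\bx - \bp) + (\bp - \by)$,
\[
	\verts{\bx - \by}_2^2 = \verts{\bx - \bp}_2^2 + 2\langle \bx - \bp, \bp - \by\rangle + \verts{\bp - \by}_2^2 ,
\]
and observe that the variational inequality yields $\langle \bx - \bp, \bp - \by\rangle = -\langle \by - \bp, \bx - \bp\rangle \geq 0$, while $\verts{\bp - \by}_2^2 \geq 0$ trivially. Dropping these two nonnegative terms gives $\verts{\bx - \by}_2^2 \geq \verts{\bx - \bp}_2^2$, and taking square roots delivers $\verts{\bx - \Pi_{\mathcal{A}}(\by)}_2 \leq \verts{\bx - \by}_2$, as required.

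The only delicate step in this otherwise routine fact is the variational inequality, where convexity of $\mathcal{A}$ is indispensable: without it the segment $\bp + t(\bx - \bp)$ need not remain feasible, and the first-order minimality argument producing $\phi'(0) \geq 0$ collapses. Closedness is used only to guarantee existence of $\bp$; everything after the variational inequality is elementary algebra with inner products.
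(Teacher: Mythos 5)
Your proof is correct: the variational inequality $\langle \by - \Pi_{\mathcal{A}}(\by), \bx - \Pi_{\mathcal{A}}(\by)\rangle \leq 0$ is derived properly from the endpoint-minimality of $\phi(t)$ on $[0,1]$ (which is exactly where convexity of $\mathcal{A}$ enters), and the expansion of $\verts{\bx-\by}_2^2$ then drops two nonnegative terms. The paper itself states this fact without proof, treating it as standard convex analysis (it is the obtuse-angle characterization of projection, from which nonexpansiveness follows), and your argument is precisely the canonical textbook one, so there is no discrepancy to flag.
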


\begin{fact} \label{fact:grad_lagrangian_bound}
	(Bound on gradients of the Lagrangian)		
	Consider the Lagrangian function over the primal and dual variables defined in \eqref{eq:lagrangian}. Then we have the following bounds:		
	\begin{enumerate} [(a)]
		\item $\bbE \Vert \nabla_{\blambda} \mathcal{L}(\bx^{(t)},\blambda^{(t)}) \Vert^2 \leq 2 C^2 + 2 \delta^2 \eta^2 \mathbb{E} \Vert \blambda^{(t)} \Vert^2$
		\item $\mathbb{E} \verts{\nabla_{\bx} \mathcal{L}(\bx^{(t)},\blambda^{(t)})}^2 \leq (1+m) \left( G^2  + 	\widetilde{G}^2 \mathbb{E} \verts{\blambda}^2  \right)$
	\end{enumerate}
	where $C^2, \tilde{G} $  and $G $ are as defined in Assumption \ref{assump:gradient} and \ref{assump:g}. We provide the proof for this fact in Appendix \ref{sec:app_feedback}.
\end{fact}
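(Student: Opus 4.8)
The plan is to differentiate the compact Lagrangian \eqref{eq:lagrangian} explicitly and then bound each gradient's squared norm by splitting it into a controlled number of summands. For part (a), observe that the objective term $f(\bx,\bxi)$ is free of $\blambda$, so $\nabla_{\blambda}\mathcal{L}(\bx,\blambda) = \bg(\bx) - \delta\eta\blambda$. I would apply the elementary inequality $\Vert\bu-\bv\Vert_2^2 \le 2\Vert\bu\Vert_2^2 + 2\Vert\bv\Vert_2^2$ to get $\Vert\nabla_{\blambda}\mathcal{L}(\bx^{(t)},\blambda^{(t)})\Vert^2 \le 2\Vert\bg(\bx^{(t)})\Vert^2 + 2\delta^2\eta^2\Vert\blambda^{(t)}\Vert^2$. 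Since $\bg$ stacks the scalars $g_{ij}(\bx_i,\bx_j)$, each bounded by $C_{ij}$, we have $\Vert\bg(\bx^{(t)})\Vert^2 = \sum_{i}\sum_{j\in\mathcal{N}_i} g_{ij}^2 \le \sum_{i}\sum_{j\in\mathcal{N}_i} C_{ij}^2 \le C^2$ pointwise by Assumption A.\ref{assump:g}; taking expectations yields the stated bound.

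For part (b), the regularizer $-\tfrac{\delta\eta}{2}\Vert\blambda\Vert^2$ is free of $\bx$, so $\nabla_{\bx}\mathcal{L} = \nabla_{\bx}f(\bx,\bxi) + \sum_{k=1}^m \lambda_k\nabla_{\bx}g_k(\bx)$, which I would regard as a sum of exactly $1+m$ vectors: the objective gradient $\bv_0 := \nabla_{\bx}f$ and one scaled constraint gradient $\bv_k := \lambda_k\nabla_{\bx}g_k$ for each of the $m$ constraints. The power-mean (Cauchy--Schwarz) inequality $\Vert\sum_{k=0}^m \bv_k\Vert^2 \le (1+m)\sum_{k=0}^m\Vert\bv_k\Vert^2$ then gives $\Vert\nabla_{\bx}\mathcal{L}\Vert^2 \le (1+m)(\Vert\nabla_{\bx}f\Vert^2 + \sum_{k}\lambda_k^2\Vert\nabla_{\bx}g_k\Vert^2)$. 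Each constraint term is controlled deterministically: writing $g_k = g_{ij}$, we have $\Vert\nabla_{\bx}g_k\Vert^2 = \Vert\nabla_{\bx_i}g_{ij}\Vert^2 + \Vert\nabla_{\bx_j}g_{ij}\Vert^2 \le G_{ij}^2 \le \widetilde{G}^2$ by \eqref{assump:gradient_constraint}, so $\sum_k\lambda_k^2\Vert\nabla_{\bx}g_k\Vert^2 \le \widetilde{G}^2\Vert\blambda\Vert^2$.

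It then remains to take expectations. The constraint contribution is already bounded pointwise, so by linearity its expectation is at most $\widetilde{G}^2\mathbb{E}\Vert\blambda\Vert^2$. For the objective term, $\Vert\nabla_{\bx}f\Vert^2 = \sum_i\Vert\nabla_{\bx_i}f_i(\bx_i^{(t)},\bxi_i^{(t)})\Vert^2$, and because $\bx_i^{(t)}$ (line 6) is fixed before the sample $\bxi_i^{(t)}$ is drawn (line 9), conditioning on the history and applying \eqref{assump:gradient_func} gives $\mathbb{E}\Vert\nabla_{\bx_i}f_i\Vert^2 \le G_i^2$; summing over $i$ yields $\mathbb{E}\Vert\nabla_{\bx}f\Vert^2 \le \sum_i G_i^2 = G^2$. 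Combining the two estimates produces $\mathbb{E}\Vert\nabla_{\bx}\mathcal{L}\Vert^2 \le (1+m)(G^2 + \widetilde{G}^2\mathbb{E}\Vert\blambda\Vert^2)$, as claimed.

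I do not anticipate a genuine obstacle, as both bounds are mechanical consequences of the assumptions; this is a lemma-of-convenience collecting the gradient estimates invoked later in the convergence analysis. The two points needing care are (i) counting the summands correctly as $1+m$ so that the constant in the power-mean inequality matches the statement, and (ii) ensuring the expectation splits cleanly, namely that the random multiplier $\blambda^{(t)}$ does not corrupt the stochastic-gradient bound. The latter is handled by noting that the constraint-gradient bounds hold uniformly over $\bx\in\mathcal{X}^n$ and are independent of the fresh sample, so only the objective term requires the conditioning argument above.
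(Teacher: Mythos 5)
Your proposal is correct and follows essentially the same route as the paper's proof: part (a) via the same split $\Vert\bg(\bx^{(t)})-\delta\eta\blambda^{(t)}\Vert^2 \le 2\Vert\bg\Vert^2+2\delta^2\eta^2\Vert\blambda^{(t)}\Vert^2$ with Assumption A.\ref{assump:g}, and part (b) via a $(1+m)$-term Cauchy--Schwarz expansion plus the law of total expectation to invoke \eqref{assump:gradient_func} (the paper applies triangle inequality first and then scalar Cauchy--Schwarz, which is computationally identical to your vector power-mean step). Your explicit remark that the constraint-gradient bound holds deterministically, so the random multiplier $\blambda^{(t)}$ factors out cleanly, is in fact slightly more careful than the paper's write-up of that same point.
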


\begin{fact} \label{fact:F_diff_bound}
	For all $\bx \in \mathcal{X}^n,$ we have:
	\begin{align*}
		\mathbb{E}[F(\bx)]-F(\bx^{*}) > -4GR
	\end{align*}
	where $\bx^{*}$ is an optimal solution of \eqref{eq:main_obj}, and $R,G$ are as defined in Assumptions \ref{assump:X} and \ref{assump:g}, respectively. We provide a proof for Fact \ref{fact:F_diff_bound} in Appendix \ref{sec:app_feedback}.
\end{fact}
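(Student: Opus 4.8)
The plan is to lower-bound $F(\bx)-F(\bx^*)$ directly from convexity of the expected objective together with the boundedness of the domain (Assumption~\ref{assump:X}) and of the gradients (Assumption~\ref{assump:gradient}). Since $F$ is deterministic in its argument and the claim is asserted for every $\bx\in\cX^n$, I would first prove the pointwise bound $F(\bx)-F(\bx^*)\ge -2GR$ for an arbitrary fixed $\bx\in\cX^n$; taking expectations over any (possibly random) $\bx$ supported on $\cX^n$ then recovers the stated $\bbE$-form, and the constant $4GR$ leaves comfortable slack beyond the sharper $2GR$ I expect to get.

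First I would record that each $F_i(\bx_i)=\bbE_{\bxi_i}[f_i(\bx_i,\bxi_i)]$ is convex, being an expectation of the convex maps $f_i(\cdot,\bxi_i)$ (Assumption~\ref{assump:convexity}), so the separable sum $F$ is convex on $\cX^n$. The first-order convexity inequality evaluated at the optimum then gives $F(\bx)-F(\bx^*)\ge \langle \nabla F(\bx^*),\,\bx-\bx^*\rangle$, and Cauchy--Schwarz converts this into $F(\bx)-F(\bx^*)\ge -\verts{\nabla F(\bx^*)}_2\,\verts{\bx-\bx^*}_2$. Bounding the two norms is all that remains.

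For the gradient, working in the stacked notation I would use $\verts{\nabla F(\bx^*)}_2^2=\sum_{i=1}^n \verts{\nabla F_i(\bx_i^*)}_2^2$ and apply Jensen's inequality per node, $\verts{\nabla F_i(\bx_i^*)}_2=\verts{\bbE_{\bxi_i}\nabla_{\bx_i} f_i(\bx_i^*,\bxi_i)}_2\le \sqrt{\bbE_{\bxi_i}\verts{\nabla_{\bx_i} f_i(\bx_i^*,\bxi_i)}_2^2}\le G_i$ by Assumption~\ref{assump:gradient}, whence $\verts{\nabla F(\bx^*)}_2\le \sqrt{\sum_i G_i^2}=G$. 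For the displacement, since $\bx^*$ is feasible it lies in $\cX^n$, so summing $\verts{\bx_i}_2^2\le R^2/n$ over the $n$ nodes (Assumption~\ref{assump:X}) yields $\verts{\bx}_2\le R$ and likewise $\verts{\bx^*}_2\le R$, so the triangle inequality gives $\verts{\bx-\bx^*}_2\le 2R$. Combining the two bounds produces $F(\bx)-F(\bx^*)\ge -2GR > -4GR$, as required.

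The argument is essentially routine, so I do not anticipate a genuine obstacle; the only two points needing care are the Jensen step that passes from the per-sample gradient bound in Assumption~\ref{assump:gradient} to a bound on $\verts{\nabla F_i}_2$, and the bookkeeping that aggregates the per-node constants into $G=\sqrt{\sum_i G_i^2}$ through Cauchy--Schwarz rather than a cruder $\sum_i G_i$. The generous constant $4GR$ in the statement absorbs any looseness one might prefer to introduce---for instance, bounding $F(\bx)$ and $F(\bx^*)$ separately against a common reference point instead of invoking the gradient inequality directly still comfortably yields the claim.
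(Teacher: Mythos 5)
Your proof is correct, and it takes a genuinely different route from the paper's. You apply the first-order convexity inequality to the \emph{expected} objective $F$ at the single point $\bx^{*}$, then bound $\verts{\nabla F(\bx^{*})}_2 \leq G$ (Jensen per node plus separability of $F$) and $\verts{\bx - \bx^{*}}_2 \leq 2R$, obtaining the sharper constant $2GR$. The paper instead works at the sample level: it writes the convexity inequality for $f(\cdot,\bxi)$ at \emph{both} points $\bx$ and $\btx$, deduces the pointwise two-sided bound $\vert f(\bx,\bxi)-f(\btx,\bxi)\vert \leq \left( \verts{\nabla_{\bx} f(\bx,\bxi)} + \verts{\nabla_{\bx} f(\btx,\bxi)} \right)\verts{\bx - \btx}$, and only then takes expectations (again via Jensen) to get the Lipschitz-type estimate $\vert F(\bx)-F(\btx)\vert \leq 2G\verts{\bx-\btx} \leq 4GR$, specializing $\btx=\bx^{*}$. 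Your approach buys a factor of $2$ in the constant, since only the gradient at $\bx^{*}$ enters; the paper's buys a slightly stronger intermediate conclusion (a uniform two-sided bound on $F$ over all pairs in $\cX^n$) and sidesteps the interchange $\nabla_{\bx_i}\,\bbE_{\bxi_i}[f_i] = \bbE_{\bxi_i}[\nabla_{\bx_i} f_i]$ that your step $\nabla F_i(\bx_i^{*})=\bbE_{\bxi_i}[\nabla_{\bx_i} f_i(\bx_i^{*},\bxi_i)]$ implicitly requires --- a standard interchange that is justified under Assumption~3, but worth flagging explicitly, since the paper's route needs convexity and the gradient bound only samplewise. Both arguments rest on the same two ingredients ($\verts{\bx}_2\leq R$ on $\cX^n$ from Assumption~1, and the $L^2$ gradient bound with $G=\sqrt{\sum_i G_i^2}$ from Assumption~3), so the difference is essentially where the expectation is taken; either way the strict inequality $>-4GR$ follows since $G,R>0$.
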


%

\subsubsection{Proof of Theorem \ref{thm:opt}}
We first consider the following lemma which establishes a relationship between the Lagrangian function and the primal, dual variables in Algorithm~\ref{algo}. The proof for the lemma, provided in Appendix~\ref{sec:app_feedback}, relies on considering the update steps of the primal and dual variables in Algorithm~\ref{algo} and invoking convexity/concavity arguments for the Lagrangian function.
\begin{lemma} \label{lem:proof_sample_init}
	Consider the update steps in Algorithm~\ref{algo} with learning rate $\eta$ and parameter $\delta \geq 0$. Under assumptions~A.\ref{assump:X}-A.\ref{assump:g}, for any $\bx \in \mathcal{X}^{n}$ and $\blambda \in \mathbb{R}^m$ with $\blambda \succeq \mathbf{0}$, the summation of the Lagrangian function satisfies:
	\begin{align*} 
		\sum_{t=1}^{T} \mathbb{E}\left(\mathcal{L}(\bx^{(t)}, \blambda) - \mathcal{L}(\bx, \blambda^{(t)})\right)  & \leq \frac{1}{2 \eta} \left(  \verts{\blambda }^2 + 4R^2 \right)  + \eta T \left( (1+m)G^2 + C^2 \right) + \frac{1}{2\eta} \sum_{t=1}^{T}    \bbE \verts{ \bx^{(t)} - \tilde{\bx}^{(t)} }^2  \notag \\
		& \quad + \eta \left( (1+m)\tilde{G}^2 + \delta^2\eta^2 \right)  \sum_{t=1}^{T} \mathbb{E}[ \Vert\blambda^{(t)}\Vert^2  ]
	\end{align*}
	where $G,C, \tilde{G}$, $R$ are defined in assumptions~A.\ref{assump:X}-A.\ref{assump:g}.
\end{lemma}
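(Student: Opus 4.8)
The plan is to run the classical primal--dual (saddle-point) telescoping argument separately on the primal and dual updates, and then add the two resulting inequalities so that the ``diagonal'' terms $\mathcal{L}(\bx^{(t)},\blambda^{(t)})$ cancel, leaving exactly $\sum_t \bbE[\mathcal{L}(\bx^{(t)},\blambda) - \mathcal{L}(\bx,\blambda^{(t)})]$ on the left. The one non-standard feature---and the reason the term $\frac{1}{2\eta}\sum_t \bbE\verts{\bx^{(t)}-\btx^{(t)}}^2$ appears---is that the gradient descent step is applied to the \emph{raw} iterate $\btx^{(t)}$ while all gradients are evaluated at the projected node parameter $\bx^{(t)}=\projXn(\bhx^{(t)})$. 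First I would rewrite the primal update of line~11 in stacked form as $\btx^{(t+1)} = \projXn\bigl(\btx^{(t)} - \eta\,\nabla_{\bx}\mathcal{L}(\bx^{(t)},\blambda^{(t)})\bigr)$, where the factor $2$ multiplying the constraint gradient is exactly what is needed for $\nabla_{\bx}\mathcal{L}$ of \eqref{eq:sample_grad_primal} to appear, using the symmetry $\lambda_{ij}^{(t)}=\lambda_{ji}^{(t)}$ already established. The dual update of line~12 is likewise $\blambda^{(t+1)} = \bigl[\blambda^{(t)} + \eta\,\nabla_{\blambda}\mathcal{L}(\bx^{(t)},\blambda^{(t)})\bigr]^{+}$ with $\nabla_{\blambda}\mathcal{L} = \bg(\bx^{(t)}) - \delta\eta\,\blambda^{(t)}$.

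For the primal side, fix any $\bx\in\cX^n$ and expand $\verts{\btx^{(t+1)}-\bx}^2$; by nonexpansiveness of the projection (Fact~\ref{fact:proj_bound}, applicable since $\bx\in\cX^n$) this is at most $\verts{\btx^{(t)}-\bx}^2 - 2\eta\langle\nabla_{\bx}\mathcal{L},\btx^{(t)}-\bx\rangle + \eta^2\verts{\nabla_{\bx}\mathcal{L}}^2$. I would split the inner product as $\langle\nabla_{\bx}\mathcal{L},\bx^{(t)}-\bx\rangle + \langle\nabla_{\bx}\mathcal{L},\btx^{(t)}-\bx^{(t)}\rangle$; convexity of the sample Lagrangian in $\bx$ (Assumption~\ref{assump:convexity}, with $\blambda^{(t)}\succeq\mathbf{0}$) lower-bounds the first piece by $\mathcal{L}(\bx^{(t)},\blambda^{(t)})-\mathcal{L}(\bx,\blambda^{(t)})$, while the second ``perturbation'' piece is controlled by the Young inequality $2\eta\,|\langle\nabla_{\bx}\mathcal{L},\btx^{(t)}-\bx^{(t)}\rangle| \leq \eta^2\verts{\nabla_{\bx}\mathcal{L}}^2 + \verts{\btx^{(t)}-\bx^{(t)}}^2$, which is precisely the Young parameter producing the coefficient $\tfrac{1}{2\eta}$ after dividing through by $2\eta$. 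Telescoping over $t$ kills all but $\verts{\btx^{(1)}-\bx}^2\leq 4R^2$ (both iterates lie in $\cX^n$, hence have norm $\leq R$ by Assumption~\ref{assump:X}); taking expectations and inserting Fact~\ref{fact:grad_lagrangian_bound}(b) for $\bbE\verts{\nabla_{\bx}\mathcal{L}}^2$ yields the primal half.

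The dual side is the mirror image: expand $\verts{\blambda^{(t+1)}-\blambda}^2$ for any $\blambda\succeq\mathbf{0}$, discard the projection onto the nonnegative orthant by Fact~\ref{fact:proj_bound}, and use \emph{concavity} of $\mathcal{L}$ in $\blambda$ (it is linear in $\blambda$ minus the concave quadratic $-\tfrac{\delta\eta}{2}\verts{\blambda}^2$) to obtain $\langle\nabla_{\blambda}\mathcal{L},\blambda-\blambda^{(t)}\rangle \geq \mathcal{L}(\bx^{(t)},\blambda)-\mathcal{L}(\bx^{(t)},\blambda^{(t)})$. No perturbation term arises here, since the dual variables are updated and evaluated at the same point. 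Telescoping (with $\blambda^{(1)}=\mathbf{0}$, so the initial term is $\verts{\blambda}^2$) and substituting Fact~\ref{fact:grad_lagrangian_bound}(a) for $\bbE\verts{\nabla_{\blambda}\mathcal{L}}^2$ gives the dual half. Adding the primal and dual inequalities, the $\mathcal{L}(\bx^{(t)},\blambda^{(t)})$ terms cancel, and collecting like terms---$\tfrac{1}{2\eta}(\verts{\blambda}^2+4R^2)$, the $G^2$ and $C^2$ contributions merging into $\eta T((1+m)G^2+C^2)$, the two $\bbE\verts{\blambda^{(t)}}^2$ contributions into $\eta((1+m)\tilde{G}^2 + \delta^2\eta^2)\sum_t\bbE\verts{\blambda^{(t)}}^2$, and the perturbation sum $\tfrac{1}{2\eta}\sum_t\bbE\verts{\bx^{(t)}-\btx^{(t)}}^2$---reproduces the claimed bound.

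I expect the main obstacle to be the bookkeeping around the perturbed iterate together with the stochastic expectation. One must apply convexity to the \emph{sample} Lagrangian and note that the stochastic gradient $\nabla_{\bx_i}f_i(\bx_i^{(t)},\bxi_i^{(t)})$ only becomes $\nabla F_i$ after conditioning on the history; this is legitimate because $\bx^{(t)}$ is measurable with respect to samples drawn strictly before iteration $t$, so $\bxi^{(t)}$ is independent of $\bx^{(t)}$ and the expectation of the sample Lagrangian difference equals the corresponding difference with $F$. The other delicate point is matching the Young parameter so that the cross term contributes exactly $\tfrac{1}{2\eta}\verts{\btx^{(t)}-\bx^{(t)}}^2$ (and no larger constant), and checking that the extra $\eta^2\verts{\nabla_{\bx}\mathcal{L}}^2$ it generates merges cleanly with the expansion's gradient term; the resulting sum $\sum_t\bbE\verts{\bx^{(t)}-\btx^{(t)}}^2$ is precisely what Lemma~\ref{lem:e_t} will later control.
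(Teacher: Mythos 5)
Your proposal is correct and follows essentially the same route as the paper's own proof: a nonexpansive-projection expansion of $\verts{\btx^{(t+1)}-\bx}^2$ and $\verts{\blambda^{(t+1)}-\blambda}^2$, the convexity/concavity inequality to replace inner products by Lagrangian differences, the perturbed-iterate split $\langle\nabla_{\bx}\mathcal{L},\bx^{(t)}-\bx\rangle=\langle\nabla_{\bx}\mathcal{L},\btx^{(t)}-\bx\rangle+\langle\nabla_{\bx}\mathcal{L},\bx^{(t)}-\btx^{(t)}\rangle$ with exactly the Young parameter producing $\tfrac{1}{2\eta}\verts{\bx^{(t)}-\btx^{(t)}}^2$ plus an extra $\tfrac{\eta}{2}\verts{\nabla_{\bx}\mathcal{L}}^2$, telescoping with $\blambda^{(1)}=\mathbf{0}$ and $\verts{\bx-\btx^{(1)}}^2\leq 4R^2$, and finally Fact~\ref{fact:grad_lagrangian_bound} to absorb the gradient norms. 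Your coefficient bookkeeping (total $\eta\verts{\nabla_{\bx}\mathcal{L}}^2$ from the two sources, merging into $\eta T((1+m)G^2+C^2)$ and $\eta((1+m)\tilde{G}^2+\delta^2\eta^2)\sum_t\mathbb{E}\verts{\blambda^{(t)}}^2$) matches the paper's derivation exactly.
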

We first note that using definition of Lagrangian from \eqref{eq:lagrangian} and $\bbE [f(\bx^{(t)} , \bxi^{(t)})] = F(\bx^{(t)})$, the L.H.S. of the result in Lemma~\ref{lem:proof_sample_init} can also be written as following for any $\blambda \succeq \mathbf{0}$:
\begin{align*}
	\mathbb{E} \left[ \sum_{t=1}^{T} \left(\mathcal{L}(\bx^{(t)}, \blambda) - \mathcal{L}(\bx, \blambda^{(t)})\right) \right] 
	& =  \sum_{t=1}^{T} ( \mathbb{E}[F(\bx^{(t)})] {-}   F(\bx^{*}) ) {+} \left\langle \blambda, \sum_{t=1}^T \mathbb{E}[\bg(\bx^{(t)})] \right\rangle {-} \frac{\delta \eta T}{2} \Vert \blambda \Vert^2   \\
	& \qquad  - \mathbb{E} \left[ \sum_{t=1}^{T} \langle \blambda^{(t)}, \bg(\bx^{*}) \rangle \right] + \frac{\delta \eta}{2} \mathbb{E}  \left[ \sum_{t=1}^{T} \Vert \blambda^{(t)} \Vert^2\right] 
\end{align*}
Rearranging the terms and employing the bound from Lemma~\ref{lem:proof_sample_init}, for any $\blambda \succeq \mathbf{0}$, we thus have:
\begin{align} \label{eq:proof_11}
	&\sum_{t=1}^{T} \left( \mathbb{E}[F(\bx^{(t)})] {-}   F(\bx^{*}) \right) + \left\langle \blambda, \sum_{t=1}^T \mathbb{E}[\bg(\bx^{(t)})] \right\rangle - \frac{\delta \eta T}{2} \Vert \blambda \Vert^2 
	 - \mathbb{E} \left[ \sum_{t=1}^{T} \langle \blambda^{(t)}, \bg(\bx^{*}) \rangle \right]  \notag \\
	& \leq \frac{1}{2 \eta} \left(  \verts{\blambda }^2 {+} 4R^2 \right) {+} \frac{1}{2\eta} \sum_{t=1}^{T}    \mathbb{E} \verts{ \be^{(t)} }^2   {+} \eta T \left( (1{+}m)G^2 {+} C^2 \right) + \eta \left( (1+m)\tilde{G}^2 + \delta^2\eta^2 - \frac{\delta}{2} \right)  \sum_{t=1}^{T} \mathbb{E}[ \Vert\blambda^{(t)}\Vert^2  ]
\end{align}
where we have defined $\bbE \verts{\be^{(t)}}^2 := \bbE \verts{\btx^{(t)} - \bx^{(t)}}^2 $ on the R.H.S. of \eqref{eq:proof_11}. This term relates to the error between the copies of the parameters at time $t$ (denoted by $\btx^{(t)}$) and the true parameters of the nodes (given by $\bx^{(t)}$). We provide a bound for this term in Lemma \ref{lem:e_t} stated below, the proof of which is provided in Section \ref{sec:app_feedback}.

\begin{lemma} \label{lem:e_t}
	For the update steps in Algorithm \ref{algo}, the norm of expected error $\mathbb{E}\Vert \be^{(t)} \Vert$ for any $t \in [T]$ is bounded as:
	\begin{align*}
		\mathbb{E}\Vert \be^{(t)} \Vert^2 \leq \frac{2 \eta^2}{\omega} \sum_{k=0}^{t-2} \left(  1 {-} \frac{\omega}{2} \right)^k \mathbb{E}\Vert\nabla_{\bx}\mathcal{L}_{t{-}1{-}k}(\bx^{(t{-}1{-}k)},\blambda^{(t{-}1{-}k)} ) \Vert^2
	\end{align*}
	where $\eta$ is the learning rate and $\omega$ is the compression factor.
\end{lemma}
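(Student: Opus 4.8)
The plan is to strip away the two projections and reduce the statement to a clean geometric recursion for the \emph{untransmitted residual} $\br^{(t)} := \btx^{(t)} - \bhx^{(t)}$, i.e. the part of the raw parameter that the error-feedback mechanism has not yet pushed into the copies. Since $\bx^{(t)} = \projXn(\bhx^{(t)})$ and $\btx^{(t)} \in \mathcal{X}^n$ (it is either the random initialization or a projection output from line~11), Fact~\ref{fact:proj_bound} applied with the point $\btx^{(t)}$ and the vector $\bhx^{(t)}$ gives $\verts{\be^{(t)}} = \verts{\btx^{(t)} - \projXn(\bhx^{(t)})} \leq \verts{\btx^{(t)} - \bhx^{(t)}} = \verts{\br^{(t)}}$. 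Hence it suffices to bound $\mathbb{E}\verts{\br^{(t)}}^2$.

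Next I would set up the one-step recursion. By the copy-update rule (lines~2 and 5), each block satisfies $\btx_i^{(t)} - \bhx_i^{(t)} = (\btx_i^{(t)} - \bhx_i^{(t-1)}) - \C(\btx_i^{(t)} - \bhx_i^{(t-1)})$, so applying the compression guarantee \eqref{eq:comp_op} conditioned on the history (the argument $\btx^{(t)} - \bhx^{(t-1)}$ is independent of the time-$t$ compression randomness) and summing over nodes yields $\mathbb{E}\verts{\br^{(t)}}^2 \leq (1-\omega)\,\mathbb{E}\verts{\btx^{(t)} - \bhx^{(t-1)}}^2$. I then split $\btx^{(t)} - \bhx^{(t-1)} = (\btx^{(t)} - \btx^{(t-1)}) + \br^{(t-1)}$ and control the increment: since $\btx^{(t)} = \projXn(\btx^{(t-1)} - \eta\nabla_{\bx}\mathcal{L}_{t-1})$ while $\btx^{(t-1)} = \projXn(\btx^{(t-1)})$, non-expansiveness (Fact~\ref{fact:proj_bound} again) gives $\verts{\btx^{(t)} - \btx^{(t-1)}} \leq \eta\verts{\nabla_{\bx}\mathcal{L}_{t-1}(\bx^{(t-1)},\blambda^{(t-1)})}$.

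I then apply Young's inequality $\verts{\bu + \bv}^2 \leq (1+\beta)\verts{\bu}^2 + (1+\beta^{-1})\verts{\bv}^2$ with $\bu = \br^{(t-1)}$, $\bv = \btx^{(t)} - \btx^{(t-1)}$ and the tuned choice $\beta = \omega/2$. Using the elementary bounds $(1-\omega)(1+\omega/2) \leq 1 - \omega/2$ and $(1-\omega)(1+2/\omega) \leq 2/\omega$ together with the increment bound above, the recursion collapses to
\begin{align*}
	\mathbb{E}\verts{\br^{(t)}}^2 \leq \Big(1 - \tfrac{\omega}{2}\Big)\,\mathbb{E}\verts{\br^{(t-1)}}^2 + \frac{2\eta^2}{\omega}\,\mathbb{E}\verts{\nabla_{\bx}\mathcal{L}_{t-1}(\bx^{(t-1)},\blambda^{(t-1)})}^2 .
\end{align*}
Unrolling this from the base case $\br^{(1)} = \mathbf{0}$, which holds because the uncompressed first round forces $\btx^{(1)} = \bhx^{(1)}$, and reindexing the resulting geometric sum gives exactly the claimed bound once combined with the reduction $\mathbb{E}\verts{\be^{(t)}}^2 \leq \mathbb{E}\verts{\br^{(t)}}^2$.

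I expect the main obstacle to be the careful bookkeeping around the two projections: one must check that both $\btx^{(t)}$ and $\btx^{(t-1)}$ genuinely lie in $\mathcal{X}^n$ so that Fact~\ref{fact:proj_bound} is legitimately invoked in both places, and that the compression inequality is pulled out of the correct conditional expectation (and then summed blockwise over nodes). The other delicate point is the tuning $\beta = \omega/2$, which is precisely what makes the contraction factor equal $1 - \omega/2$ and the noise coefficient equal $2\eta^2/\omega$; a looser choice would not reproduce the exact constants in the statement.
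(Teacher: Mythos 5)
Your proposal is correct and mirrors the paper's own proof essentially step for step: the same reduction $\mathbb{E}\Vert \be^{(t)}\Vert^2 \leq \mathbb{E}\Vert \btx^{(t)} - \bhx^{(t)}\Vert^2$ via Fact~\ref{fact:proj_bound}, the same application of the compression inequality \eqref{eq:comp_op} to get the factor $(1-\omega)$, the same splitting of $\btx^{(t)} - \bhx^{(t-1)}$ with Young's inequality tuned at $\alpha = \omega/2$ together with $(1-\omega)(1+\omega/2) \leq 1 - \omega/2$ and $(1-\omega)(1+2/\omega) \leq 2/\omega$, and the same unrolling from the uncompressed first round giving $S^{(1)} = 0$. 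Your explicit attention to conditioning the compression bound on the history and summing blockwise is a welcome refinement that the paper leaves implicit, but it does not change the argument.
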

Plugging the bound for $\mathbb{E} \Vert \be^{(t)}\Vert^2$ from Lemma \ref{lem:e_t} into \eqref{eq:proof_11}:
\begin{align} \label{eq:temp_double_sum1} 
	&\sum_{t=1}^{T} \left( \mathbb{E}[F(\bx^{(t)})] {-}   F(\bx^{*}) \right) + \left\langle \blambda, \sum_{t=1}^T \mathbb{E}[\bg(\bx^{(t)})] \right\rangle - \frac{\delta \eta T}{2} \Vert \blambda \Vert^2 - \mathbb{E} \left[ \sum_{t=1}^{T} \langle \blambda^{(t)}, \bg(\bx^{*}) \rangle \right]  \notag \\
	&  \leq \frac{1}{2 \eta} \left(  \verts{\blambda }^2 + 4R^2 \right) + \eta T \left( (1+m)G^2 + C^2 \right)  + \frac{\eta}{\omega} \sum_{t=1}^{T}  \sum_{k=0}^{t-2} \left(  1 {-} \frac{\omega}{2} \right)^k \mathbb{E}\verts{\nabla_{\bx}\mathcal{L}_{t{-}1{-}k}(\bx^{(t{-}1{-}k)},\blambda^{(t{-}1{-}k)} )}^2 \notag \\
	&  \quad + \eta \left( (1+m)\tilde{G}^2 + \delta^2\eta^2 - \frac{\delta}{2} \right)  \sum_{t=1}^{T} \mathbb{E}[ \Vert\blambda^{(t)}\Vert^2  ] \\ 		
	& = \frac{1}{2 \eta} \left(  \verts{\blambda }^2 + 4R^2 \right) + \eta T \left( (1+m)G^2 + C^2 \right)   + \eta \left( (1+m)\tilde{G}^2 + \delta^2\eta^2 - \frac{\delta}{2} \right)  \sum_{t=1}^{T} \mathbb{E}[ \Vert\blambda^{(t)}\Vert^2  ] \notag \\	
	& \qquad + \frac{\eta}{\omega} \sum_{k=1}^{T-1}  \sum_{t=k+1}^{T} \left(  1 - \frac{\omega}{2} \right)^{(t-1-k)} \mathbb{E}\verts{\nabla_{\bx}\mathcal{L}_{k}(\bx^{(k)},\blambda^{(k)} )}^2 \notag 
\end{align}
where the equality follows from rewriting the double-sum of the second term.
Using $\sum_{t=k{+}1}^{T} \left(  1 {-} \frac{\omega}{2} \right)^{(t{-}1{-}k)} \leq \sum_{t=0}^{\infty} \left(  1 {-} \frac{\omega}{2} \right)^{(t)} = \frac{2}{\omega}$, we get:
\begin{align} \label{eq:temp_double_sum2}
	&\sum_{t=1}^{T} ( \mathbb{E}[F(\bx^{(t)})] {-}   F(\bx^{*}) ) + \left\langle \blambda, \sum_{t=1}^T \mathbb{E}[\bg(\bx^{(t)})] \right\rangle {-} \frac{\delta \eta T}{2} \Vert \blambda \Vert^2   - \mathbb{E} \left[ \sum_{t=1}^{T} \langle \blambda^{(t)}, \bg(\bx^{*}) \rangle \right]  \notag \\
	& \leq \frac{1}{2 \eta} \left(  \verts{\blambda }^2 + 4R^2 \right) + \eta T \left( (1+m)G^2 + C^2 \right) + \eta \left( (1+m)\tilde{G}^2 + \delta^2\eta^2 - \frac{\delta}{2} \right)  \sum_{t=1}^{T} \mathbb{E}[ \Vert\blambda^{(t)}\Vert^2  ] \notag \\	
	& \qquad + \frac{2\eta}{\omega^2} \sum_{t=1}^{T-1} \mathbb{E}\verts{\nabla_{\bx}\mathcal{L}_{t}(\bx^{(t)},\blambda^{(t)} )}^2
\end{align}
Using the bound from (b) in Fact \ref{fact:grad_lagrangian_bound} for the last term in above, and noting that $\frac{2}{\omega^2} > 1$ gives us:
\begin{align*}
	&\sum_{t=1}^{T} ( \mathbb{E}[F(\bx^{(t)})] {-}   F(\bx^{*}) ) + \left\langle \blambda, \sum_{t=1}^T \mathbb{E}[\bg(\bx^{(t)})] \right\rangle {-} \frac{\delta \eta T}{2} \Vert \blambda \Vert^2  - \mathbb{E} \left[ \sum_{t=1}^{T} \langle \blambda^{(t)}, \bg(\bx^{*}) \rangle \right]  \notag \\
	& \leq \frac{1}{2 \eta} \left(  \verts{\blambda }^2 + 4R^2 \right) + \eta T \left( \frac{4}{\omega^2}  (1+m)G^2 + C^2 \right)  + \eta \left( \frac{4}{\omega^2}(1+m)\tilde{G}^2 + \delta^2\eta^2 - \frac{\delta}{2} \right)  \sum_{t=1}^{T} \mathbb{E}[ \Vert\blambda^{(t)}\Vert^2  ]
\end{align*}

We now focus on the last term in the above equation, which has a coefficient of $\left( \frac{4}{\omega^2}(1+m)\tilde{G}^2 + \delta^2\eta^2 - \frac{\delta}{2} \right) $. To get rid of the last term in the upper bound, we choose the value of $\delta$ such that this coefficient is negative. In essence, this requires choosing a value of $\delta$ such that $2\eta^2 \omega^2 \delta^2  - \omega^2 \delta +  8(1+m)\tilde{G}^2   \leq 0$. Since this is a quadratic in $\delta$, it can be easily checked that the following range of $\delta$ satisfies the desired inequality:
\begin{align*}
	 \delta \in \left[ \frac{1 - \sqrt{1 - \frac{64 \eta^2 (1+m)\tilde{G}^2}{\omega^2} }}{4 \eta^2}, \frac{1 + \sqrt{1 - \frac{64 \eta^2 (1+m)\tilde{G}^2}{\omega^2}}}{4 \eta^2} \right]
\end{align*}
To ensure that the values of the interval bounds are real, we require running the algorithm for $T \geq \frac{64a^2 (1+m)\tilde{G}^2}{\omega^2}$.
We also remark that for $T \rightarrow \infty$ (i.e., $\eta \rightarrow 0$ for the choice $\eta = \frac{a}{\sqrt{T}}$), the left interval value of $\delta$ converges to a positive constant, meaning that we can still choose a finite value of $\delta$ in the infinitesimally small learning rate regime so that our bounds derived below are not vacuous.
Choosing $\delta$ in the stated range and using the fact $\mathbb{E} \left[ \sum_{t=1}^{T} \langle \blambda^{(t)}, \bg(\bx^{*}) \rangle \right] \leq 0$ since $\blambda^{(t)} \succeq \mathbf{0} $ for $t \in [T]$ and $\bg(\bx^{*}) \preceq \mathbf{0}$ and rearranging the terms, we get:
\begin{align} \label{eq:proof_7}
	&\sum_{t=1}^{T} \left( \mathbb{E}[F(\bx^{(t)})] -   F(\bx^{*}) \right) + \left\langle \blambda, \sum_{t=1}^T \mathbb{E}[\bg(\bx^{(t)})] \right\rangle  - \left(\frac{\delta \eta T}{2} + \frac{1}{2 \eta} \right) \Vert \blambda \Vert^2  \leq \frac{2R^2}{\eta} + \eta T \left( \frac{4}{\omega^2}  (1+m)G^2 + C^2 \right)
\end{align}


Recall that $\blambda$ can be any non-negative vector. We set it as
$\blambda = \frac{\left[ \mathbb{E} \left[ \sum_{t=1}^{T} \bg(\bx^{(t)})  \right]  \right]^{+}}{\delta \eta T + \frac{1}{\eta}}$.
Plugging this choice into \eqref{eq:proof_7} gives us:
\begin{align} \label{eq:proof_9}
	\sum_{t=1}^{T} \left( \mathbb{E}[F(\bx^{(t)})] -   F(\bx^{*}) \right) +  \sum_{i=1}^{n} \sum_{j \in \mathcal{N}_i} \frac{\left( \left[\mathbb{E} \left[ \sum_{t=1}^{T} g_{ij}(\bx_i^{(t)} , \bx_j^{(t)})  \right] \right]^{+} \right)^2}{2\left( \delta \eta  T + \frac{1}{ \eta} \right)} \leq \frac{2R^2}{\eta} + \eta T \left( \frac{4}{\omega^2}  (1+m)G^2 + C^2 \right)
\end{align}
Dividing both sides of \eqref{eq:proof_9} by $T$ and noting that the second term on the L.H.S. of \eqref{eq:proof_9} is positive, we can bound the time-average sub-optimality of $F$ as:
\begin{align*}
	 \sum_{t=1}^{T} \frac{\left( \mathbb{E}[F(\bx^{(t)})] -   F(\bx^{*}) \right)}{T}  \leq \frac{2R^2}{\eta T}  + \eta \left( \frac{4}{\omega^2}  (1{+}m)G^2 {+} C^2 \right)
\end{align*} 
Using the convexity of $F$ and setting $\eta = \frac{a}{\sqrt{T}}$ for some positive constant $a$, for $\blx^{(T)}_{avg} := \frac{1}{T} \sum_{t=1}^{T} \bx^{(t)} $ we have:
\begin{align} \label{eq:opt_result}
 \mathbb{E}[F(\blx^{(T)}_{avg})] {-}   F(\bx^{*})  \leq \frac{2R^2}{a \sqrt{T}} {+} \frac{a}{\sqrt{T}} \left( \frac{4}{\omega^2}(1{+}m)G^2 {+} 2C^2 \right) 
\end{align}
This concludes the proof of the convergence rate for the objective suboptimality given in \eqref{eq:thm_1} in Theorem \ref{thm:opt}. \\
We now prove our result for the pairwise constraint functions. From Fact \ref{fact:F_diff_bound}, $\forall \bx \in \mathcal{X}^n,$ we have $\mathbb{E}[F(\bx)]-F(\bx^{*}) > -4GR$. Using this inequality in \eqref{eq:proof_9} yields:
\begin{align*}
	& \sum_{i=1}^{n} \sum_{j \in \mathcal{N}_i} \left( \left[\mathbb{E} \left[ \sum_{t=1}^{T} g_{ij}(\bx_i^{(t)} , \bx_j^{(t)})  \right] \right]^{+} \right)^2 \\ 
	& \leq  \frac{4R^2}{\eta^2} +  T\left(  4R^2\delta  +  \frac{8}{\omega^2}(1+m)G^2 + 2C^2 + \frac{8GR}{\eta}  \right)   + T^2 \left(  2\delta \eta^2  \left( \frac{4}{\omega^2}(1+m)G^2 + C^2 \right)  + 8\delta \eta GR  \right) 
\end{align*} 
Note that the above bound also holds for a given $i \in [n]$ and $j \in \mathcal{N}_i$, that is, the R.H.S. of the above equation is also a bound for the term $\left( \left[\mathbb{E} \left[ \sum_{t=1}^{T} g_{ij}(\bx_i^{(t)} , \bx_j^{(t)})  \right] \right]^{+} \right)^2$.
Taking the square root of both sides, and using the fact that $\sqrt{\sum_{i=1}^{n} p_i} \leq \sum_{i=1}^{n} \sqrt{p_i}$ for positive $p_1,\hdots,p_n$, we get:
\begin{align} \label{eq:proof_10}
	 \mathbb{E} \left[ \sum_{t=1}^{T} g_{ij}(\bx_i^{(t)} , \bx_j^{(t)})  \right]  
	&  \leq  \frac{2R}{\eta} +  \sqrt{2T} \sqrt{\left(  2R^2\delta  +  \frac{4}{\omega^2}(1+m)G^2 + C^2 + \frac{4GR}{\eta}  \right)}  \notag   \\
	& \quad + \sqrt{2}T \sqrt{\left(  \delta \eta^2  \left( \frac{4}{\omega^2}(1+m)G^2 + C^2 \right)  + 4\delta \eta GR  \right)}
\end{align}
Dividing both sides of \eqref{eq:proof_10} by $T$, using the convexity of constraint function $g_{ij}$ and substituting $\eta = \frac{a}{\sqrt{T}}$, we get:
\begin{align*}
	\mathbb{E} \left[ g_{ij}(\blx_{i,avg}^{(T)} , \blx_{j,avg}^{(T)})  \right]  
	 & \leq   \frac{1}{T^{\frac{1}{4}}} \left( \sqrt{\frac{8GR}{a}} + \sqrt{8\delta aGR}  \right)  + \frac{2R}{a \sqrt{T}} +  \frac{1}{\sqrt{T}} \sqrt{2\left(  2R^2\delta + \frac{4}{\omega^2}(1+m)G^2 + C^2  \right)}   \\
	& \quad + \frac{1}{\sqrt{T}} \sqrt{2 \delta a^2  \left( \frac{4}{\omega^2} (1+m)G^2 + C^2 \right)}
\end{align*} 
This concludes the proof of \eqref{eq:thm_2} in Theorem \ref{thm:opt}

$\hfill \hspace{-0.6cm}\square$

\section{Decentralized compressed optimization with Bandit feedback} \label{sec:bandit_feedback}
\noindent In this section, we focus on the bandit feedback scenario where the nodes do not have direct access to samples drawn from their local data distributions. This could, as an example, arise in situations where the samples are high dimensional and thus can be hard to observe or measure. For the model we work with in this paper, we now assume that the nodes instead can query the value of the local objective function $f_i(\bx_i,\xi_i)$ for some particular choices of the parameter $\bx_i$. We first formally define the objective query process for the nodes and then describe how this model can be used to develop a stochastic gradient method for optimizing the overall objective \eqref{eq:main_obj}.

Let $\mathbb{S} := \{ \bu \in \mathbb{R}^d | \verts{\bu}_2 =1 \}$ and $\mathbb{B} := \{ \bu \in \mathbb{R}^d | \verts{\bu}_2 \leq 1 \} $ be the unit sphere and the unit ball in $d$-dimensions, respectively. For each node $i \in [n]$, and at any stage in the optimization process, we assume access to two local objective values $f_i(\bx_i \pm \zeta \bu_i, \xi_i )$ where $\bu_i$ is sampled uniformly at random over the unit sphere $ \mathbb{S}$ (independent of $\bx_i$ or $\xi_i$), $\zeta$ is a small positive constant, and $\bx_i$ is the local model parameter. To evaluate the gradient using these objective values, we make use of the following fact from \cite{FKM05}:
\begin{fact} \label{fact:bandit_interim1}
	Consider a function $\phi : \mathbb{R}^{d} \rightarrow \mathbb{R}$, and let $\zeta >0$. Define $\tilde{\phi}(\bx) = \bbE_{\bu \sim \mathcal{U}(\mathbb{B})} [ \phi(\bx + \zeta \bu) \bu ]$ where $\mathcal{U}(\mathbb{B})$ denotes uniform distribution over the unit ball $\mathbb{B} \subset \mathbb{R}^d$. The following hold:
	\begin{enumerate}[(i)]
		\item If $\phi$ is convex, then $\tilde{\phi}$ is also convex.
		\item For any $\bx \in \mathbb{R}^{d}$:
		\begin{align*}
			\nabla_{\bx} \tilde{\phi}(\bx) = \frac{d}{\zeta} \bbE_{\bu \sim \mathcal{U}(\mathbb{S})} [\phi(\bx+ \zeta \bu ) \bu ]
		\end{align*}
		where $\mathcal{U}(\mathbb{S})$ denotes the uniform distribution over the unit sphere $\mathbb{S} \subset \mathbb{R}^d$.	
\end{enumerate}
	 
\end{fact}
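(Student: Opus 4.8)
The statement has two independent parts, and I would treat them separately. For part (i), the plan is to observe that the ball-smoothed function is literally an average of translates of $\phi$: writing $\tilde\phi(\bx) = \bbE_{\bu\sim\mathcal{U}(\mathbb{B})}[\phi(\bx+\zeta\bu)]$, each map $\bx\mapsto\phi(\bx+\zeta\bu)$ is the composition of the convex function $\phi$ with the affine translation $\bx\mapsto\bx+\zeta\bu$, and is therefore convex for every fixed $\bu$. Since $\tilde\phi$ is an expectation, i.e. a convex combination (with the nonnegative weights of the uniform density on $\mathbb{B}$) of these convex functions, it is itself convex. This part is routine and I expect no difficulty.

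For part (ii), the key idea is to pass from the ball-average defining $\tilde\phi$ to a surface integral over the sphere via the divergence theorem. First I would rewrite the expectation as a normalized volume integral, $\tilde\phi(\bx) = \frac{1}{V_d\zeta^d}\int_{\zeta\mathbb{B}}\phi(\bx+\bv)\,d\bv$, where $V_d$ is the volume of the unit ball. Differentiating under the integral sign and using the chain-rule identity $\nabla_\bx\phi(\bx+\bv)=\nabla_\bv\phi(\bx+\bv)$, I obtain $\nabla_\bx\tilde\phi(\bx) = \frac{1}{V_d\zeta^d}\int_{\zeta\mathbb{B}}\nabla_\bv\phi(\bx+\bv)\,d\bv$. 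Applying the divergence theorem coordinatewise (integrating the gradient of a scalar over the ball of radius $\zeta$) converts the volume integral into a flux integral over the boundary sphere with outward unit normal $\bv/\verts{\bv}_2$, namely $\int_{\partial(\zeta\mathbb{B})}\phi(\bx+\bv)\frac{\bv}{\verts{\bv}_2}\,dS(\bv)$.

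The remaining step is the bookkeeping of the scaling constants. Parametrizing the boundary sphere by $\bv=\zeta\bu$ with $\bu\in\mathbb{S}$, the unit normal $\bv/\verts{\bv}_2$ becomes $\bu$ and the surface measure scales as $dS=\zeta^{d-1}\,dS_{\mathbb{S}}(\bu)$, so the flux integral becomes $\zeta^{d-1}\int_{\mathbb{S}}\phi(\bx+\zeta\bu)\bu\,dS_{\mathbb{S}}(\bu)$. Combining with the prefactor $1/(V_d\zeta^d)$ and converting the surface integral into an expectation over $\mathcal{U}(\mathbb{S})$ (dividing and multiplying by the sphere's surface area $S_{d-1}$) gives $\nabla_\bx\tilde\phi(\bx)=\frac{S_{d-1}}{V_d\zeta}\bbE_{\bu\sim\mathcal{U}(\mathbb{S})}[\phi(\bx+\zeta\bu)\bu]$. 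The claimed factor $\frac{d}{\zeta}$ then follows from the classical identity $S_{d-1}=d\,V_d$ relating the surface area of the unit sphere to the volume of the unit ball, which one checks directly from the Gamma-function formulas using $\Gamma(d/2+1)=(d/2)\Gamma(d/2)$.

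I expect the main obstacle to be the regularity justification rather than the algebra: differentiating under the integral sign and invoking the divergence theorem presuppose enough smoothness of $\phi$. The cleanest fix is to move the $\bx$-dependence into the domain of integration by the change of variables $\bw=\bx+\bv$, writing $\tilde\phi(\bx)=\frac{1}{V_d\zeta^d}\int_{\bx+\zeta\mathbb{B}}\phi(\bw)\,d\bw$, so that $\tilde\phi$ is a mollification of $\phi$ and hence differentiable whenever $\phi$ is merely continuous; the surface representation of its gradient then comes from differentiating the translated-domain integral, and a standard density argument extends the identity from smooth $\phi$ to the general case. Pinning down the normalization constants and the outward-normal orientation precisely in this limiting argument is the only place that requires real care.
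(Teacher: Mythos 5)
The paper never proves this fact---it imports it verbatim from \cite{FKM05}---and your argument is correct and is essentially the classical proof from that reference: convexity of the ball-average as a mixture of affine translates, and the divergence theorem turning the volume integral of $\nabla\phi$ into a flux over the sphere, with the normalization $\frac{d}{\zeta}$ recovered from $S_{d-1}=d\,V_d$ and the mollification/density argument adequately covering the regularity needed to differentiate under the integral. Note also that you correctly read the definition as the scalar ball-average $\tilde{\phi}(\bx)=\bbE_{\bu\sim\mathcal{U}(\mathbb{B})}[\phi(\bx+\zeta\bu)]$, silently fixing the stray factor $\bu$ in the paper's statement, which would otherwise make $\tilde{\phi}$ vector-valued and both claims ill-posed.
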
 
For the node $i \in [n]$, the above fact can be used to estimate the gradient of the local objective function using the values $f_i(\bx_i \pm \zeta \bu_i, \xi_i)$ where $\bu_i \sim \mathcal{U}(\mathbb{S})$. For a given $\xi_i$, we define $\tilde{f}_i(\bx_i,\xi_i) = \bbE_{\bv_i \sim \mathcal{U}(\mathbb{B})} [ f_i(\bx_i + \zeta \bv_i, \xi_i) \bv_i ]  $. From the above fact, it also follows that $\tilde{f}(\bx_i,\xi_i)$ is convex in $\bx_i$ for a given $\xi_i$.

Note that as stated, the parameter vector $\bx_i \pm \zeta \bu_i$ may not lie in the feasible set $\mathcal{X}$ for all range of values of $\zeta$. Thus, we need some restriction on the range of values $\zeta$ can take. In the following, we make this argument precise. We first introduce an additional mild assumption on the topology of the set $\mathcal{X}$:
\begin{assumption} \label{assump:topology}
	The set $\mathcal{X}$ has a non-empty interior, that is, $\exists \by_0 \in \mathcal{X}$, $r >0$, s.t. $\mathcal{B}(\by_0,r) \subset \mathcal{X}$. Here, $\mathcal{B}(\by_0,r)$ denotes the open ball of radius $r$ centered at $\by_0$, i.e., $\mathcal{B}(\by_0,r) = \{ \bx | \verts{\bx - \by_0}_2 \leq r \}$
\end{assumption}  
From the above assumption, by the convexity of $\mathcal{X}$, it can also be concluded that for any $\alpha \in (0,1)$ and $\bx \in \mathcal{X}$, we have $\mathcal{B}((1-\alpha) \bx + \alpha \by_0 , \alpha r ) \subset \mathcal{X}$. We further define the set $\widetilde{\mathcal{X}} = \{ (1-\frac{\zeta}{r}) \bx+ \frac{\zeta}{r}\by_0 | \bx \in \mathcal{X}  \}$. It can now be readily checked that if $\bx_i \in \widetilde{\mathcal{X}}$ for the node $i$, then $\bx_i \pm \zeta \bu_i \in \mathcal{X}$, where $\bu_i$ is any point on the unit sphere $\mathbb{S}$.  Thus in the development of the algorithm below, we project the parameters onto the space $\widetilde{\mathcal{X}}$ to ensure that during the bandit feedback, the evaluated parameter $\bx_i \pm \zeta \bu_i$ for any node $i$ lies in the space $\mathcal{X}$. 
\subsection{Algorithm: Bandit Feedback}
We develop a saddle point algorithm for the bandit feedback scenario to find a saddle-point of the modified Lagrangian:
\begin{align} \label{eq:bandit_lagrangian}
	\tilde{	\mathcal{L}}(\bx,\blambda) = \sum_{i=1}^{n} \Big[\tilde{f}_i(\bx_i, \xi_i)  
	+ \sum_{j \in \mathcal{N}_i} \Big( \lambda_{ij} g_{ij}(\bx_i,\bx_j) - \frac{\delta \eta}{2} \lambda_{ij}^2\Big)  \Big]
\end{align}
The vector $\bx \in \widetilde{\mathcal{X}}^{n}$ represents the stacked node parameters and $\blambda$ represents the stacked dual variables. 
Here, the main difference from the modified Lagrangian in sample feedback case presented in \eqref{eq:sample_lagrangain} is that the objectives \{$f_i\}_{i=1}^{n}$ of the nodes are now replaced by the functions $\{\tilde{f}_i\}_{i=1}^{n}$. Importantly, the gradient of these functions can be computed via the result of Fact \ref{fact:bandit_interim1} which enables us to develop a primal-dual gradient algorithm to find the saddle point of \eqref{eq:bandit_lagrangian}. 

The gradient w.r.t. the primal variable $\bx$ is given by:
\begin{align} \label{eq:bandit_lagrangian_primal_interim1}
	\nabla_{\bx_i}  \tilde{	\mathcal{L}}(\bx,\blambda) &=  \sum_{j \in \mathcal{N}_i} \left[\lambda_{ij} \nabla_{\bx_i} g_{ij}(\bx_i,\bx_j) + \lambda_{ji} \nabla_{\bx_i} g_{ij}(\bx_j, \bx_i) \right] \notag \\
	& \quad  + \nabla_{\bx_i} \tilde{f}_i(\bx_i, \xi_i) 
\end{align}
Using the result from Fact \ref{fact:bandit_interim1}, for any $i\in [n]$ we have:
\begin{align*}
	\nabla_{\bx} \tilde{f}_i(\bx_i , \xi_i) = \frac{d}{2 \zeta} \bbE_{\bu_i \sim \mathcal{U}(\mathbb{S})} [ f(\bx_i + \zeta \bu_i , \xi_i)  - f(\bx_i - \zeta \bu_i , \xi_i)  ]\bu_i
\end{align*} 
As the node has access to the values of the local objective function in the bandit feedback scenario, the quantity $\frac{d}{2 \zeta}[f(\bx_i + \zeta \bu_i , \xi_i) - f(\bx_i - \zeta \bu_i , \xi_i)] $ for a given $\bu_i \sim \mathcal{U}(\mathbb{S})$, $\bx_i, \xi_i$, serves as an unbiased estimate of $\nabla_{\bx} \tilde{f}_i(\bx_i , \xi_i)$. Using this, we can construct the following estimate for the primal gradient $\nabla_{\bx_i}  \tilde{	\mathcal{L}}(\bx,\blambda)$:
\begin{align} \label{eq:bandit_p}
	\bp_i^{(t)}  := \frac{d}{2 \xi} \left[ f_i(\bx_i^{(t)} {+} \zeta\bu_i^{(t)} , \bxi_i^{(t)} ) {-} f_i(\bx_i^{(t)} {-} \zeta\bu_i^{(t)} , \bxi_i^{(t)} )  \right] \bu_i^{(t)}  + 2 \sum_{j \in \mathcal{N}_i} \lambda_{ij}^{(t)} \nabla_{\bx_i} g_{ij} (\bx_i^{(t)},\bx_j^{(t)})
\end{align}
The gradient of the Lagrangian in \eqref{eq:bandit_lagrangian} w.r.t. the dual parameter $\lambda_{ij}$ for $i \in [n]$ and $j \in \mathcal{N}_i$ is the same as in the sample feedback scenario and is given in \eqref{eq:sample_grad_primal}.

\begin{algorithm}[t!]
	\caption{Compressed Decentralized Optimization with Bandit Feedback}
	{\bf Initialize:} Random $\btx_i^{(1)} \in \widetilde{\mathcal{X}}$ individually for each $i \in [n]$ and $\lambda_{ij}^{(1)} = 0$ for each $j \in \mathcal{N}_i$. $\bhx_i^{(0)}=\mathbf{0}$ for each $i \in [n]$, number of iterations $T$, learning rate $\eta$, parameters $\zeta, \delta > 0$  \\
	(Communicate in the first iteration without compression to ensure that $\btx^{(1)} = \bhx^{(1)}$ )
	\vspace{0cm}
	\begin{algorithmic}[1] \label{algob}
		\FOR{$t=1$ \textbf{to} $T$ in parallel for $i \in [n]$}
		\STATE Compute $\bq_i^{(t)} = \C ( \btx_i^{(t)} - \bhx_i^{(t-1)} ) $	
		\FOR{nodes $k \in \mathcal{N}_i \cup \{i\}$} 	
		\STATE Send $\bq_i^{(t)}$ and receive $\bq_k^{(t)}$
		\STATE Update $\bhx_k^{(t)} = \bhx_k^{(t-1)} + \bq_k^{(t)} $ 
		\STATE Compute $\bx_k^{(t)} = \projXt (\bhx_k^{(t)}) $
		\ENDFOR
		\STATE Update running average for local parameter: \\
		$ \blx_{i,avg}^{(t)} = \frac{1}{t} \bx_i^{(t)} + \frac{t-1}{t} \blx_{i,avg}^{(t-1)} $
		\STATE Sample $\bu_i^{(t)} \sim \mathcal{U}(\mathbb{S})$
		\STATE Query the two values: $f_i(\bx_i^{(t)} \pm \zeta\bu_i^{(t)} , \bxi_i^{(t)} )$
		\STATE Compute the Lagrangian primal gradient estimate:
		\begin{align*}
			\bp_i^{(t)}  := 2 \sum_{j \in \mathcal{N}_i} \lambda_{ij}^{(t)} \nabla_{\bx_i} g_{ij} (\bx_i^{(t)},\bx_j^{(t)}) + \frac{d}{2 \xi} \left[ f_i(\bx_i^{(t)} {+} \zeta\bu_i^{(t)} , \bxi_i^{(t)} ) {-} f_i(\bx_i^{(t)} {-} \zeta\bu_i^{(t)} , \bxi_i^{(t)} )  \right] \bu_i^{(t)}  
		\end{align*}
		\STATE Update the primal variable via gradient descent:
		\begin{align*}
			\btx_i^{(t+1)}  = \projXt \left( \btx_i^{(t)}  - \eta \bp_i^{(t)}  \right)  
		\end{align*}
		\STATE For all $j \in \mathcal{N}_i$, update the dual variables via gradient ascent:
		\begin{align*}
			\lambda_{ij}^{(t+1)} = \left[ \lambda_{ij}^{(t)} + \eta \left( g_{ij}(\bx_i^{(t)} , \bx_j^{(t)} ) - \delta \eta \lambda_{ij}^{(t)}  \right)  \right]^{+}
		\end{align*}
		\ENDFOR
	\end{algorithmic}
{\bf Output:} Time averaged parameters $\blx_{i,avg}^{(T)}$ for all $i \in [n]$.
\end{algorithm}

The development of Algorithm~\ref{algob} is similar to that of Algorithm~\ref{algo}. The main difference is that we now find the saddle point of \eqref{eq:bandit_lagrangian} via alternating primal and dual variable gradient updates given in equations \eqref{eq:bandit_p} and \eqref{eq:sample_grad_dual} and project onto the space $\widetilde{\mathcal{X}}$ to ensure that the perturbed parameters lie in $\mathcal{X}$.
As before, for a node $i \in [n]$, $\btx_i$ refers to its raw parameter, $\bx_i$ as its local parameter, and $\bhx_i$ is the copy parameter.

We initialize the raw parameters $\{\btx_i^{(1)}\}_{i=1}^{n}$ inside the set $\widetilde{\mathcal{X}}$. During the first round, we assume the communication without compression to ensure that $\btx_i^{(1)} = \bhx_i^{(1)} $ for all $i \in [n]$.
 At time step $t\in [T]$, the node $i \in [n]$ computes and exchanges its copy parameters and constructs the local node parameter $\bx_i^{(t)}$ for which we track the running average (lines 2-8). As samples from the underlying distribution $\mathcal{P}_i$ are not directly revealed to the node in case of bandit feedback; instead it queries the value of the local objective $f_i (.,\xi_i)$ at parameters $\bx_i^{(t)}+\zeta \bu_i^{(t)}$ and $\bx_i^{(t)} - \zeta \bu_i^{(t)}$ where $\bu_i^{(t)}$ is uniformly sampled over the $d$-dimensional unit sphere $\mathbb{S}$ (lines 9-10). These values are then used to construct an unbiased estimate of $\nabla_{\bx_i}  \tilde{	\mathcal{L}}(\bx,\blambda)$ using \eqref{eq:bandit_p}, and then to update the raw parameter $\btx_i^{(t)}$ along with a projection operation back to the set $\widetilde{\mathcal{X}}$ (lines 11-13). Finally, the dual variables are also updated via gradient descent along with the projection to the positive real space to ensure feasibility (line 13). As in the case of sample feedback, the update of the dual steps in line 13 and the initialization $\lambda_{ij}^{(1)} = 0$ ensures that $\lambda_{ij}^{(t)} = \lambda_{ji}^{(t)}$ for all $t\in[T]$, and for all $i \in [n], j \in \mathcal{N}_i$.
\subsection{Main Result: Bandit Feedback}
\allowdisplaybreaks
{
We now present the convergence result rate for Algorithm~\ref{algob} which optimizes \eqref{eq:main_obj} in the bandit feedback scenario.
The proof details are provided in Section \ref{subsec:proof_bandit}.
\begin{theorem} \label{thm:opt_bandit}
	Consider running Algorithm \ref{algob} for $T$ iterations with fixed step size $\eta = \frac{a}{\sqrt{T}}$ for positive constant $a$, with perturbation constant $\zeta = \frac{1}{T}$, and regularization parameter $\delta$ taking values in $\left[ \frac{1 - \sqrt{1 - \frac{256 \eta^2 (1+m)\tilde{G}^2}{\omega^2} }}{4 \eta^2}, \frac{1 + \sqrt{1 - \frac{256 \eta^2 (1+m)\tilde{G}^2}{\omega^2}}}{4 \eta^2} \right]$, where $\omega \in (0,1)$ is the compression factor. Then, under Assumptions A.\ref{assump:X}-A.\ref{assump:topology}, for $T \geq \frac{256a^2 (1+m)\tilde{G}^2}{\omega^2}$, the expected value of $F$ evaluated at the stacked time averaged vector $\blx^{(T)}_{avg} := \frac{1}{T} \sum_{t=1}^{T}\bx^{(t)} $ satisfies:
	\begin{align} \label{eq:thmb_1}
		& \mathbb{E}[F(\blx^{(T)}_{avg})] -   F(\bx^{*}) \leq  \frac{2R^2}{ a \sqrt{T}}    + \frac{a}{\sqrt{T}} \left[ \frac{16}{\omega^2} d^2 (1+m)G^2 + C^2  \right]  +  \frac{2 \sqrt{m} \tilde{G}RC}{\delta a r \sqrt{T}} + \frac{4 R G}{rT} + \frac{4 \sqrt{n}G}{T}
	\end{align}
	where $r \leq \frac{R}{\sqrt{n}}$. For $i\in [n]$, $j \in \mathcal{N}_i$, the function $g_{ij}$ satisfies:
	\begin{align} \label{eq:thmb_2}
		 \mathbb{E} \left[ g_{ij}(\blx_{i,avg}^{(T)} , \blx_{j,avg}^{(T)})  \right] & \leq \frac{1}{T^{\nicefrac{1}{4}}} \left[\sqrt{\frac{8GR}{a}} + \sqrt{ \frac{8 \delta a ( R + r \sqrt{n}) G}{r} + 8GR\delta a    } \right] \notag \\
		& \quad  + \frac{1}{\sqrt{T}}    \sqrt{  \left( \frac{32}{\omega^2} d^2 (1{+}m)G^2 {+} 2C^2  \right)  {+}  \frac{4 \sqrt{m} \tilde{G} RC}{r \delta a^2}    {+} 4 R^2 \delta}  \notag \\
		&  \quad +  \frac{1}{\sqrt{T}}   \sqrt{   \delta a^2 \left( \frac{32}{\omega^2} d^2 (1+m)G^2 + 2C^2  \right)   +  \frac{4 \sqrt{m} \tilde{G} R C}{r}   }    \notag \\
		& \quad + \frac{1}{T^{\nicefrac{3}{4}}} \sqrt{  \frac{8(R+r\sqrt{n})G}{ra}  }
	\end{align}
	where $\blx_{k,avg}^{(T)}$ is time averaged parameter of node $k$ in $\blx_{avg}^{(T)}$.
\end{theorem}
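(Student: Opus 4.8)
The plan is to mirror the sample-feedback argument of Section~\ref{subsec:proof_sample}, but working throughout with the smoothed Lagrangian $\tilde{\mathcal{L}}$ of \eqref{eq:bandit_lagrangian} in place of $\mathcal{L}$, and then paying off two additional error sources --- the bias introduced by smoothing and the bias introduced by shrinking the feasible set from $\mathcal{X}$ to $\widetilde{\mathcal{X}}$ --- both of which decay like $\zeta = 1/T$. First I would establish the bandit analog of Lemma~\ref{lem:proof_sample_init}. By Fact~\ref{fact:bandit_interim1}, the estimate $\bp_i^{(t)}$ of \eqref{eq:bandit_p} is an unbiased estimate of $\nabla_{\bx_i}\tilde{\mathcal{L}}$ (the constraint part is exact and the two-point objective part is unbiased for $\nabla_{\bx_i}\tilde{f}_i$), so the same convexity/concavity telescoping that produced Lemma~\ref{lem:proof_sample_init} yields a bound of identical form, with $\mathcal{L}$ replaced by $\tilde{\mathcal{L}}$ and the primal second-moment term replaced by $\mathbb{E}\Vert\bp^{(t)}\Vert_2^2$.

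The next step is to control that second moment. Since $f_i$ is $G_i$-Lipschitz (a consequence of Assumption~\ref{assump:gradient}) and $\Vert\bu_i^{(t)}\Vert_2 = 1$, the two-point estimator satisfies $\bigl\Vert\tfrac{d}{2\zeta}[f_i(\bx_i+\zeta\bu_i)-f_i(\bx_i-\zeta\bu_i)]\bu_i\bigr\Vert_2 \le d\,G_i$, so the objective part of $\bp_i^{(t)}$ carries a factor $d^2$ relative to the sample case. Combined with the constraint-gradient part this gives a bandit analog of Fact~\ref{fact:grad_lagrangian_bound}(b) of the form $\mathbb{E}\Vert\bp^{(t)}\Vert_2^2 \le (1+m)\bigl(4d^2G^2 + 4\tilde{G}^2\,\mathbb{E}\Vert\blambda^{(t)}\Vert_2^2\bigr)$; the factor $4d^2$ on $G^2$ reproduces the $\tfrac{16}{\omega^2}d^2(1+m)G^2$ term in \eqref{eq:thmb_1}, and the factor $4$ on $\tilde{G}^2$ forces the enlarged constant $256$ in the admissible $\delta$-range and the requirement $T \ge 256a^2(1+m)\tilde{G}^2/\omega^2$. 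The error-feedback bound of Lemma~\ref{lem:e_t} carries over verbatim with $\nabla_{\bx}\mathcal{L}$ replaced by $\bp^{(t)}$, since the compression steps (lines~2--6) are identical in Algorithms~\ref{algo} and~\ref{algob}. As in the sample case, $\delta$ is then chosen in the stated range so the coefficient of $\sum_t\mathbb{E}\Vert\blambda^{(t)}\Vert_2^2$ is nonpositive and that term is discarded.

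It then remains to convert the resulting inequality, stated in terms of $\tilde{F}$ and of the optimum over $\widetilde{\mathcal{X}}^n$, into the quantities $F$ and $\bx^*$ appearing in the theorem. For the smoothing bias I would use $\lvert\tilde{F}(\bx)-F(\bx)\rvert \le \zeta\sum_i G_i \le \zeta\sqrt{n}\,G$, contributing the $\tfrac{4\sqrt{n}G}{T}$ term. For the feasible-set mismatch I would compare $\bx^*$ with its image $\tilde{\bx}^* = (1-\tfrac{\zeta}{r})\bx^* + \tfrac{\zeta}{r}\by_0 \in \widetilde{\mathcal{X}}^n$, whose distance from $\bx^*$ is $O(\zeta(R+r\sqrt{n})/r)$, and apply Lipschitzness of $F$ to obtain the $\tfrac{4RG}{rT}$-type contributions. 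Choosing the same dual test point $\blambda = [\,\mathbb{E}\sum_{t}\bg(\bx^{(t)})\,]^{+}/(\delta\eta T + 1/\eta)$, invoking Fact~\ref{fact:F_diff_bound}, and passing to the time averages via convexity of $\tilde{f}_i$ (Fact~\ref{fact:bandit_interim1}(i)) and of $g_{ij}$, then yields \eqref{eq:thmb_1} and \eqref{eq:thmb_2} after substituting $\eta = a/\sqrt{T}$ and $\zeta = 1/T$.

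I expect the main obstacle to be the careful bookkeeping that keeps the two bias terms subdominant: both the smoothing error and the set-shrinkage error must be shown to enter only at order $1/T$ (hence the choice $\zeta = 1/T$), so they do not degrade the $O(T^{-1/2})$ optimality rate or the $O(T^{-1/4})$ constraint-violation rate, while the $d^2$-inflated variance must simultaneously be threaded through the recursion of Lemma~\ref{lem:e_t} and the choice of $\delta$ without breaking the cancellation of the $\Vert\blambda^{(t)}\Vert_2^2$ term. The most delicate point in this accounting is verifying that the shrunk comparison point remains feasible, i.e.\ $\bg(\tilde{\bx}^*)\preceq\mathbf{0}$ (which follows from joint convexity of $g_{ij}$ together with feasibility of $\by_0$, and holds automatically for proximity constraints), so that the step $\mathbb{E}\sum_t\langle\blambda^{(t)},\bg(\tilde{\bx}^*)\rangle \le 0$ survives as in the sample-feedback analysis.
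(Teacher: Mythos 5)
Your outline tracks the paper's actual proof closely for most of its length: your first step (a regret bound with $\mathcal{L}$ replaced by $\widetilde{\mathcal{L}}$ and the primal gradient replaced by $\bp^{(t)}$, with the cross term $\langle \bp^{(t)}-\nabla_{\bx}\widetilde{\mathcal{L}}_t,\bx^{(t)}-\bx\rangle$ killed in expectation by conditioning on the history) is exactly what the paper does, packaged via the auxiliary function $\calH$ of \eqref{eq:bandit_vector_H} in Lemma~\ref{lem:proof_bandit_init} and Proposition~\ref{prop:interim_2}; your second-moment bound $\mathbb{E}\Vert\bp^{(t)}\Vert^2\le 4d^2(1+m)G^2+4(1+m)\tilde G^2\,\mathbb{E}\Vert\blambda^{(t)}\Vert^2$ is Proposition~\ref{prop:interim_1}(i) and correctly explains the constant $256$ and the iteration-count requirement; the transfer of Lemma~\ref{lem:e_t} to $\bp^{(t)}$ is Lemma~\ref{lem:e_tb}; and your two bias accountings at $\zeta=1/T$ match \eqref{eq:interim_6} and \eqref{eq:interim_9} (up to a factor of $2$ in the smoothing bias, which is harmless bookkeeping).

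However, your final step contains a genuine gap. You propose to retain $\mathbb{E}\sum_t\langle\blambda^{(t)},\bg(\btx^*)\rangle\le 0$ by arguing that the shrunk comparator $\btx^*=(1-\zeta/r)\bx^*+(\zeta/r)\tilde\by_0$ is feasible, ``by joint convexity of $g_{ij}$ together with feasibility of $\by_0$.'' But Assumption~A.\ref{assump:topology} grants only that $\mathcal{B}(\by_0,r)\subset\mathcal{X}$; it is nowhere assumed that $g_{ij}(\by_0,\by_0)\le 0$, and for general bounded convex $g_{ij}$ this can fail --- your remark that it ``holds automatically for proximity constraints'' covers only that one example, not the theorem's generality. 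The issue is not cosmetic: the theorem's bounds \emph{contain} the terms $\frac{2\sqrt m\tilde G RC}{\delta a r\sqrt T}$ in \eqref{eq:thmb_1} and $\frac{4\sqrt m\tilde G RC}{r\delta a^2}$, $\frac{4\sqrt m\tilde G RC}{r}$ in \eqref{eq:thmb_2}, which arise precisely because $\btx^*$ is \emph{not} assumed feasible. The paper's handling (equations \eqref{eq:interim_10}--\eqref{eq:interim_13}) splits $\langle\blambda^{(t)},\bg(\btx^*)\rangle=\langle\blambda^{(t)},\bg(\bx^*)\rangle+\langle\blambda^{(t)},\bg(\btx^*)-\bg(\bx^*)\rangle$, drops the first (genuinely feasible) part, bounds $\Vert\bg(\btx^*)-\bg(\bx^*)\Vert\le 2\alpha R\sqrt m\,\tilde G$ via the Lipschitz property implied by \eqref{assump:gradient_constraint}, and --- an ingredient your sketch omits entirely --- establishes the uniform dual bound $\Vert\blambda^{(t)}\Vert\le C/(\delta\eta)$ from the dual recursion (using $\delta\eta^2\le 1$). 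Without this decomposition and dual-norm bound, your argument either rests on an unstated extra hypothesis on $\by_0$ or cannot produce \eqref{eq:thmb_1}--\eqref{eq:thmb_2} under the assumptions as given.
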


The above result establishes that for a given compression requirement $\omega \in (0,1)$, the sub-optimality of the objective $\mathbb{E}[F(\blx^{(T)}_{avg})] -   F(\bx^{*})$ is $\mathcal{O} \left( \frac{1}{T^{\nicefrac{1}{2}}} \right) $. Similarly, the expected constraint violation for $i \in [n]$ and $j \in \mathcal{N}_i$ given by $\mathbb{E} \left[ g_{ij}(\blx_{i,avg}^{(T)} , \blx_{j,avg}^{(T)})  \right]$ is $\mathcal{O} \left( \frac{1}{T^{\nicefrac{1}{4}}} \right) $. Thus, in effect by choosing a large enough value of $T$, the number of iterations Algorithm \ref{algob} is run for, the obtained stacked parameter $\blx^{(T)}_{avg}$ is a good estimate of the optimal solution of the overall objective \eqref{eq:main_obj}. Moreover, the result obtained matches the rate that was obtained for the sample feedback case in Theorem~\ref{thm:opt}, where the nodes had access to the samples at every stage. Theorem \ref{thm:opt_bandit} thus establishes that even when node access to samples is not assumed, but rather only to a pair of values of the local objectives, the derived convergence rate suffers no degradation. 
}

\subsection{Convergence analysis} \label{subsec:proof_bandit}
In this section, we present the major proof details for establishing the convergence rates presented in Theorem \ref{thm:opt_bandit}.

\subsubsection{Preliminaries}
As done earlier for proof of bandit feedback, we use a compact notation by stacking together the parameters across the nodes. The modified Lagrangian in \eqref{eq:bandit_lagrangian} for a time step $t \in [T]$ in this notation is given as:

\begin{align} \label{eq:bandit_vector_lagrangian}
	\tilde{	\mathcal{L}}(\bx^{(t)},\blambda^{(t)}) = \tilde{f}(\bx^{(t)},\bxi^{(t)})+ \langle \blambda^{(t)} , \bg(\bx^{(t)}) \rangle - \frac{\delta \eta}{2}\Vert \blambda^{(t)} \Vert^2
\end{align}
where $\bx^{(t)}$, is of size $nd$, $\blambda^{(t)}$ is of size $m$, and $\bxi^{(t)}$ is collection of samples across all the nodes at time $t$.
For the proof we present below, we construct another quantity of interest:

\begin{align} \label{eq:bandit_vector_H}
	\calH(\bx^{(t)},\blambda^{(t)}) 
	= \widetilde{\mathcal{L}}(\bx^{(t)},\blambda^{(t)}) + \langle  \bp^{(t)} - \widetilde{\mathcal{L}}(\bx^{(t)}, \blambda^{(t)}) , \bx^{(t)}\rangle 
\end{align}
It can be seen that $\calH(\bx^{(t)}),\blambda^{(t)}$ is convex in the parameter $\bx^{(t)}$ and concave in $\blambda^{(t)}$ for any $t$. Further, the gradients of the function $\calH(\bx^{(t)}),\blambda^{(t)}$ satisfy:
\begin{align*}
	\nabla_{\bx} \calH(\bx^{(t)},\blambda^{(t)}) = \bp^{(t)}, \hspace{0.4cm}
	\nabla_{\blambda} \calH(\bx^{(t)},\blambda^{(t)}) = \nabla_{\blambda} \widetilde{\mathcal{L}}(\bx^{(t)},\blambda^{(t)})
\end{align*}

To derive our results, we consider another auxiliary result along the ones stated earlier in Section \ref{subsubsec:sample_prelim}.
\begin{fact} \label{prop:bound_f_diff}
	Under assumptions A.\ref{assump:convexity} and A.\ref{assump:gradient}, for all $t \in [T]$, $i \in [n]$ and any $\bu, \bv \in \mathcal{X}$, we have:
	\begin{align*}
		\bbE _{\bxi_i^{(t)}}[ f_i(\bu , \bxi_i^{(t)}) - f_i(\bv , \bxi_i^{(t)}) ]^2 \leq 4 G_i^2 \verts{\bu - \bv}^2 
	\end{align*}
	where $\bbE_{\bxi_i^{(t)}} [.]$ denotes expectation w.r.t. sampling at time-step $t$ for the node $i$. See Appendix \ref{sec:app_bandit} for proof.
\end{fact}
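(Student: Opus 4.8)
The plan is to bound the squared functional gap \emph{pointwise} in the sample and then take expectations, invoking the bounded-gradient assumption A.\ref{assump:gradient}. Throughout, I would fix the realization of $\bxi_i^{(t)}$ and abbreviate $\phi(\cdot) := f_i(\cdot, \bxi_i^{(t)})$, which is convex by A.\ref{assump:convexity}; I treat $\nabla \phi$ as a (sub)gradient where differentiability is not available.

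First I would apply the first-order convexity inequalities at both evaluation points: for $\bu, \bv \in \cX$,
\[
\langle \nabla \phi(\bv), \bu - \bv \rangle \le \phi(\bu) - \phi(\bv) \le \langle \nabla \phi(\bu), \bu - \bv \rangle .
\]
By Cauchy--Schwarz each inner product is bounded in magnitude by $\|\nabla\phi(\cdot)\|_2\,\|\bu - \bv\|_2$, so using the crude combination $\max\{a,b\}\le a+b$ across the two sides I obtain
\[
\bigl|\phi(\bu) - \phi(\bv)\bigr| \le \bigl( \|\nabla \phi(\bu)\|_2 + \|\nabla \phi(\bv)\|_2 \bigr)\, \|\bu - \bv\|_2 .
\]

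Next I would square this and apply $(a+b)^2 \le 2(a^2+b^2)$, which gives
\[
\bigl(\phi(\bu) - \phi(\bv)\bigr)^2 \le 2\bigl( \|\nabla \phi(\bu)\|_2^2 + \|\nabla \phi(\bv)\|_2^2 \bigr)\,\|\bu - \bv\|_2^2 .
\]
Restoring the sample dependence and taking $\bbE_{\bxi_i^{(t)}}[\,\cdot\,]$, the factor $\|\bu - \bv\|_2^2$ pulls out since $\bu,\bv$ are deterministic, and because $\bu,\bv \in \cX$ the bound A.\ref{assump:gradient} applies at each point, yielding $\bbE\|\nabla_{\bx_i} f_i(\bu, \bxi_i^{(t)})\|_2^2 \le G_i^2$ and likewise for $\bv$. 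Substituting produces $\bbE[(f_i(\bu, \bxi_i^{(t)}) - f_i(\bv, \bxi_i^{(t)}))^2] \le 2(G_i^2 + G_i^2)\|\bu - \bv\|_2^2 = 4 G_i^2 \|\bu - \bv\|_2^2$, the claimed bound.

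The argument is essentially routine, so there is no serious obstacle; the only points needing care are that A.\ref{assump:gradient} is stated only for arguments in $\cX$ (which is precisely why the hypothesis restricts $\bu,\bv \in \cX$), and the interpretation of $\nabla\phi$ as a subgradient when $f_i$ is merely convex. The constant $4$ is deliberately loose: it is generated by the crude steps $\max\{a,b\}\le a+b$ and $(a+b)^2\le 2(a^2+b^2)$. A sharper route---keeping $\max\{\|\nabla\phi(\bu)\|_2^2,\|\nabla\phi(\bv)\|_2^2\}$, or integrating the gradient along the segment $[\bv,\bu]\subset\cX$ and using Jensen---would give the constant $2$ or even $1$, but $4$ is adequate for the downstream analysis and keeps the bookkeeping simple.
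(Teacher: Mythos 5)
Your proof is correct and takes essentially the same route as the paper's: both apply the first-order convexity inequality at $\bu$ and at $\bv$ to get $\bigl|f_i(\bu,\bxi_i^{(t)})-f_i(\bv,\bxi_i^{(t)})\bigr| \le \bigl( \verts{\nabla_{\bx} f_i(\bu,\bxi_i^{(t)})} + \verts{\nabla_{\bx} f_i(\bv,\bxi_i^{(t)})} \bigr)\verts{\bu-\bv}$, then square and take expectation using A.\ref{assump:gradient} to reach the constant $4G_i^2$. The only cosmetic difference is that you make the squaring step explicit via $(a+b)^2 \le 2(a^2+b^2)$, while the paper squares directly (implicitly bounding the cross term), so the arguments coincide.
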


\subsubsection{Proof Of Theorem \ref{thm:opt_bandit}}
Similar to the proof of Theorem~\ref{thm:opt} presented in Section~\ref{subsec:proof_sample}, we start with the following lemma which establishes a relationship between the primal, dual variables in Algorithm~\ref{algob} and the function $\calH$ defined in \eqref{eq:bandit_vector_H}. This lemma can be seen as a counterpart of Lemma~\ref{lem:proof_sample_init} in the bandit feedback case.

\begin{lemma} \label{lem:proof_bandit_init}
	Consider the update steps in Algorithm~\ref{algob} with learning rate $\eta$. Under assumptions~A.\ref{assump:X}-A.\ref{assump:g}, for any $\bx \in \widetilde{\mathcal{X}}^{n}$ and $\blambda \in \mathbb{R}^m$ with $\blambda \succeq \mathbf{0}$, the summation of the function $\calH$ (defined in \eqref{eq:bandit_vector_H}) satisfies:
	\begin{align*}
		\sum_{t=1}^{T} \mathbb{E}	\left[\calH(\bx^{(t)},\blambda) - \calH( \bx,\blambda^{(t)})\right]  & \leq 
		\frac{\eta}{2} \sum_{t=1}^{T} \mathbb{E} \left( 2\Vert \bp^{(t)} \Vert^2  + \Vert \nabla_{\blambda}\widetilde{\mathcal{L}}_t(\bx^{(t)},\blambda^{(t)}) \Vert^2  \right)  \notag \\
		& \quad  + \frac{1}{2\eta} \sum_{t=1}^{T} \verts{\bx^{(t)} {-} \btx^{(t)} }^2  + \frac{1}{2 \eta} \sum_{t=1}^{T} \left(  \verts{\blambda }^2 + 4R^2 \right) 
	\end{align*}	
\end{lemma}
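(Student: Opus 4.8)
The plan is to treat the primal and dual updates of Algorithm~\ref{algob} as projected (sub)gradient steps on the auxiliary convex--concave function $\calH$ defined in \eqref{eq:bandit_vector_H}, and then run the standard regret-style saddle-point argument. The point of introducing $\calH$ is that its gradients coincide with the directions the algorithm actually uses: since the correction term $\langle \bp^{(t)} - \nabla_{\bx}\widetilde{\mathcal{L}}(\bx^{(t)},\blambda^{(t)}),\, \bx\rangle$ is affine in $\bx$, one checks immediately that $\nabla_{\bx}\calH(\bx^{(t)},\blambda^{(t)}) = \bp^{(t)}$ while $\nabla_{\blambda}\calH = \nabla_{\blambda}\widetilde{\mathcal{L}}$, and that $\calH$ inherits convexity in $\bx$ (an affine perturbation of the convex $\widetilde{\mathcal{L}}$) and concavity in $\blambda$. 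I would then bound the saddle-point gap by splitting it as $[\calH(\bx^{(t)},\blambda) - \calH(\bx^{(t)},\blambda^{(t)})] + [\calH(\bx^{(t)},\blambda^{(t)}) - \calH(\bx,\blambda^{(t)})]$ and handling the two brackets with the dual and primal updates respectively.

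For the primal bracket, I would apply non-expansiveness of the projection (Fact~\ref{fact:proj_bound}) to $\btx^{(t+1)} = \projXt(\btx^{(t)} - \eta\bp^{(t)})$ to obtain, for any $\bx \in \widetilde{\mathcal{X}}^n$,
\begin{align*}
	\langle \bp^{(t)}, \btx^{(t)} - \bx\rangle \leq \frac{1}{2\eta}\left( \verts{\btx^{(t)} - \bx}^2 - \verts{\btx^{(t+1)} - \bx}^2 \right) + \frac{\eta}{2}\verts{\bp^{(t)}}^2 .
\end{align*}
Convexity of $\calH(\cdot,\blambda^{(t)})$ together with $\nabla_{\bx}\calH(\bx^{(t)},\blambda^{(t)}) = \bp^{(t)}$ gives $\calH(\bx^{(t)},\blambda^{(t)}) - \calH(\bx,\blambda^{(t)}) \leq \langle \bp^{(t)}, \bx^{(t)} - \bx\rangle$, and I would bridge the gap between the local parameter $\bx^{(t)}$ and the raw parameter $\btx^{(t)}$ by writing $\langle \bp^{(t)}, \bx^{(t)} - \bx\rangle = \langle \bp^{(t)}, \bx^{(t)} - \btx^{(t)}\rangle + \langle \bp^{(t)}, \btx^{(t)} - \bx\rangle$ and applying Cauchy--Schwarz followed by Young's inequality to the first inner product. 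This produces the $\frac{1}{2\eta}\verts{\bx^{(t)} - \btx^{(t)}}^2$ term and a second $\frac{\eta}{2}\verts{\bp^{(t)}}^2$, which is the origin of the coefficient $2$ on $\verts{\bp^{(t)}}^2$ in the statement.

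The dual bracket is analogous: non-expansiveness of the projection onto the nonnegative orthant applied to $\blambda^{(t+1)} = [\blambda^{(t)} + \eta\,\nabla_{\blambda}\widetilde{\mathcal{L}}(\bx^{(t)},\blambda^{(t)})]^{+}$ bounds $\langle \nabla_{\blambda}\widetilde{\mathcal{L}}(\bx^{(t)},\blambda^{(t)}), \blambda - \blambda^{(t)}\rangle$ by a telescoping difference plus $\frac{\eta}{2}\verts{\nabla_{\blambda}\widetilde{\mathcal{L}}(\bx^{(t)},\blambda^{(t)})}^2$, and concavity of $\calH(\bx^{(t)},\cdot)$ converts this into a bound on $\calH(\bx^{(t)},\blambda) - \calH(\bx^{(t)},\blambda^{(t)})$. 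Adding the two brackets, summing over $t = 1,\dots,T$, and telescoping the squared-distance differences leaves only the initial terms $\frac{1}{2\eta}\verts{\btx^{(1)} - \bx}^2$ and $\frac{1}{2\eta}\verts{\blambda^{(1)} - \blambda}^2$. Using $\blambda^{(1)} = \mathbf{0}$ gives $\verts{\blambda^{(1)} - \blambda}^2 = \verts{\blambda}^2$, and since $\widetilde{\mathcal{X}} \subset \mathcal{X}$ so that both $\btx^{(1)}$ and $\bx$ satisfy $\verts{\cdot} \leq R$ by Assumption~A.\ref{assump:X}, the elementary bound $\verts{\btx^{(1)} - \bx}^2 \leq 2\verts{\btx^{(1)}}^2 + 2\verts{\bx}^2 \leq 4R^2$ recovers the $\frac{1}{2\eta}(\verts{\blambda}^2 + 4R^2)$ contribution (which, multiplied by $T\ge 1$, dominates the per-iteration form written in the statement). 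Taking expectations over the sampling at the very end yields the claimed inequality.

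The main obstacle, and the reason $\calH$ is needed at all, is the mismatch between the point at which the convexity inequality is anchored (the local, projected parameter $\bx^{(t)}$) and the point the algorithm actually descends from (the raw parameter $\btx^{(t)}$); controlling the resulting inner product $\langle \bp^{(t)}, \bx^{(t)} - \btx^{(t)}\rangle$ is exactly what introduces the perturbed-iterate error term $\verts{\bx^{(t)} - \btx^{(t)}}^2$, to be bounded afterwards via the compression analysis (the bandit analogue of Lemma~\ref{lem:e_t}). A secondary delicacy is that $\bp^{(t)}$ is only an \emph{unbiased} estimate of $\nabla_{\bx}\widetilde{\mathcal{L}}$, so the convexity step must be applied to the realized $\calH$ (whose $\bx$-gradient equals $\bp^{(t)}$ by construction) and the passage to expectations postponed until the telescoping is complete.
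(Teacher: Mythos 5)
Your proposal is correct and follows essentially the same route as the paper's proof: the same projected descent/ascent bounds via non-expansiveness of the projections, the same convexity/concavity split of the $\calH$-gap anchored at $(\bx^{(t)},\blambda^{(t)})$, the same Young's-inequality treatment of $\langle \bp^{(t)}, \bx^{(t)} - \btx^{(t)}\rangle$ (which is indeed the source of the factor $2$ on $\Vert \bp^{(t)}\Vert^2$), and the same telescoping with $\blambda^{(1)}=\mathbf{0}$ and the $4R^2$ bound from Assumption~A.\ref{assump:X}. Your side remark that telescoping actually yields a single copy of $\frac{1}{2\eta}\left(\Vert\blambda\Vert^2+4R^2\right)$, of which the statement's per-iteration sum is only a weakening, is also consistent with how the paper later invokes the lemma.
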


Now consider $\bx^{*} \in \mathcal{X}^n$, then by definition of $\widetilde{\mathcal{X}}$, we have $(1-\alpha)\bx^{*}+\alpha \tilde{\by}_0 \in \widetilde{\mathcal{X}}^n$ for $\alpha =  \frac{\zeta}{r} $ where $\tilde{\by}_0$ and $r$ are defined in assumption~A.\ref{assump:topology}\footnote{Here, $\tilde{\by}_0 \in \mathbb{R}^{nd}$ denotes the stacking of the $d$ dimensional vector $\by_0$ defined in assumption~A.\ref{assump:topology}}. Substituting $\bx =(1-\alpha)\bx^{*}+\alpha \tilde{\by}_0 $ in the result from Lemma~\ref{lem:proof_bandit_init} gives us:
\begin{align} \label{eq:proofb_14}
	\sum_{t=1}^{T} \mathbb{E}	\left[\calH(\bx^{(t)},\blambda) - \calH( (1-\alpha)\bx^{*}+\alpha \tilde{\by}_0,\blambda^{(t)})\right] & \leq 
	\frac{\eta}{2} \sum_{t=1}^{T} \mathbb{E} \left( 2\Vert \bp^{(t)} \Vert^2  + \Vert \nabla_{\blambda}\widetilde{\mathcal{L}}_t(\bx^{(t)},\blambda^{(t)}) \Vert^2  \right)  \notag \\
	& \quad + \frac{1}{2\eta} \sum_{t=1}^{T} \verts{\bx^{(t)} {-} \btx^{(t)} }^2  + \frac{1}{2 \eta} \sum_{t=1}^{T} \left(  \verts{\blambda }^2 + 4R^2 \right) 
\end{align}

The following result bounds the error $\bbE\verts{\be^{(t)}}^2 := \bbE\verts{\bx^{(t)} - \btx^{(t)}}^2$ for any time $t$ in terms of the summation of $\bbE \verts{\bp^{(t)}}$; see Appendix \ref{sec:app_bandit} for proof. 
\begin{lemma} \label{lem:e_tb} 
	Consider the error $\be^{(t)} := \bx^{(t)} - \btx^{(t)}$ for any $t\in[T]$. We have:
	\begin{align*}
		\mathbb{E}\Vert \be^{(t)}\Vert^2 \leq \frac{2 \eta^2}{\omega} \sum_{k=0}^{t-2} \left(  1 - \frac{\omega}{2} \right)^k \mathbb{E}\verts{\bp^{(t-k-1)}}^2
	\end{align*}
\end{lemma}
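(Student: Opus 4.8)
The plan is to run the standard error-feedback (perturbed-iterate) argument, which makes this the exact bandit analog of Lemma~\ref{lem:e_t}: wherever the sample-feedback proof drives the raw update by $\nabla_{\bx}\mathcal{L}_t$, here the update is driven by the estimate $\bp^{(t)}$, so the same recursion produces the stated bound with $\bp^{(t-1-k)}$ in place of the gradient. First I would fix the stacked notation and introduce the \emph{accumulated residual} $\br^{(t)} := \btx^{(t)} - \bhx^{(t-1)}$, so that lines 2 and 5 of Algorithm~\ref{algob} read $\bhx^{(t)} = \bhx^{(t-1)} + \C(\br^{(t)})$, whence $\btx^{(t)} - \bhx^{(t)} = \br^{(t)} - \C(\br^{(t)})$. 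Applying Definition~\ref{def:comp} block-wise and summing the per-node inequalities (which preserves the factor $1-\omega$) and using the tower property over the compression randomness gives
\[
\mathbb{E}\bigverts{\btx^{(t)} - \bhx^{(t)}}^2 \leq (1-\omega)\,\mathbb{E}\verts{\br^{(t)}}^2 .
\]

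The first reduction passes from $\be^{(t)} = \bx^{(t)} - \btx^{(t)} = \projXt(\bhx^{(t)}) - \btx^{(t)}$ to this residual. Since $\btx^{(t)} \in \widetilde{\mathcal{X}}^n$ (it is either a projection from the previous iteration or the initialization), Fact~\ref{fact:proj_bound} applied with $\bx = \btx^{(t)}$ and $\by = \bhx^{(t)}$ yields $\verts{\be^{(t)}} \leq \verts{\bhx^{(t)} - \btx^{(t)}}$, so $\mathbb{E}\verts{\be^{(t)}}^2 \leq (1-\omega)\,\mathbb{E}\verts{\br^{(t)}}^2 \leq \mathbb{E}\verts{\br^{(t)}}^2 =: a_t$. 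It therefore suffices to bound $a_t$.

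The core step is a one-step contraction for $a_t$. Substituting $\bhx^{(t-1)} = \btx^{(t)} - \br^{(t)}$ into the definition of $\br^{(t+1)}$ gives
\[
\br^{(t+1)} = \btx^{(t+1)} - \bhx^{(t)} = \bigl(\br^{(t)} - \C(\br^{(t)})\bigr) + \bigl(\btx^{(t+1)} - \btx^{(t)}\bigr).
\]
Because $\btx^{(t+1)} = \projXt(\btx^{(t)} - \eta\bp^{(t)})$, a second application of Fact~\ref{fact:proj_bound} (with $\bx = \btx^{(t)}$, $\by = \btx^{(t)} - \eta\bp^{(t)}$) bounds the last term by $\verts{\btx^{(t+1)} - \btx^{(t)}} \leq \eta\verts{\bp^{(t)}}$. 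Using the deterministic Young inequality $\verts{u+v}^2 \leq (1+\beta)\verts{u}^2 + (1+\beta^{-1})\verts{v}^2$ (valid pointwise, so no independence between the two summands is needed), taking full expectations, and invoking the compression bound on the first summand, the choice $\beta = \frac{\omega/2}{1-\omega}$ gives $(1+\beta)(1-\omega) = 1-\frac{\omega}{2}$ and $1+\beta^{-1} = \frac{2-\omega}{\omega} \leq \frac{2}{\omega}$, so that
\[
a_{t+1} \leq \Bigl(1 - \tfrac{\omega}{2}\Bigr)a_t + \tfrac{2\eta^2}{\omega}\,\mathbb{E}\verts{\bp^{(t)}}^2 .
\]

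Finally I would unroll this recursion from the correct base case. The uncompressed first round forces $\bhx^{(1)} = \btx^{(1)}$, hence $\be^{(1)} = \mathbf{0}$ and $\br^{(2)} = \btx^{(2)} - \btx^{(1)}$, giving $a_2 \leq \eta^2\mathbb{E}\verts{\bp^{(1)}}^2 \leq \frac{2\eta^2}{\omega}\mathbb{E}\verts{\bp^{(1)}}^2$; unrolling from $t=2$ and folding this seed term into the geometric sum (it matches the $s=1$ summand) yields $\mathbb{E}\verts{\be^{(t)}}^2 \leq a_t \leq \frac{2\eta^2}{\omega}\sum_{k=0}^{t-2}(1-\frac{\omega}{2})^k\,\mathbb{E}\verts{\bp^{(t-1-k)}}^2$, as claimed. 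I expect the only real obstacle to be bookkeeping: calibrating $\beta$ so that the geometric rate is \emph{exactly} $1-\omega/2$ while the forcing coefficient stays at most $2/\omega$, and seeding the recursion from the uncompressed first iteration so that no spurious $\verts{\btx^{(1)}}^2$ term survives. The two projection estimates and the stacking of the per-node compression inequality are routine once these points are handled.
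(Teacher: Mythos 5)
Your proposal is correct and follows essentially the same route as the paper, whose own proof of this lemma is just a pointer to Lemma~\ref{lem:e_t} with $\nabla_{\bx}\mathcal{L}_t(\bx^{(t)},\blambda^{(t)})$ replaced by $\bp^{(t)}$ and projections taken onto $\widetilde{\mathcal{X}}^{n}$: the same two applications of Fact~\ref{fact:proj_bound}, the compression inequality from Definition~\ref{def:comp}, Young's inequality calibrated to rate $1-\frac{\omega}{2}$ with forcing coefficient at most $\frac{2}{\omega}$, and geometric unrolling seeded by the uncompressed first round. Your only (cosmetic) deviation is tracking the pre-compression residual $a_t = \mathbb{E}\Vert \btx^{(t)} - \bhx^{(t-1)}\Vert^2$ rather than the paper's $S^{(t)} = \mathbb{E}\Vert \btx^{(t)} - \bhx^{(t)}\Vert^2$, which shifts the base case from $S^{(1)}=0$ to $a_2 \leq \eta^2 \mathbb{E}\Vert \bp^{(1)}\Vert^2$ --- a point you handle correctly --- and yields the identical bound.
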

Using the bound from Lemma \ref{lem:e_tb} in \eqref{eq:proofb_14} yields:
\begin{align*}
	\sum_{t=1}^{T} \mathbb{E}	\left[\calH(\bx^{(t)},\blambda) - \calH((1-\alpha)\bx^{*}+\alpha \tilde{\by}_0,\blambda^{(t)})\right]  & \leq 
	\frac{\eta}{2} \sum_{t=1}^{T} \mathbb{E} \left( 2\Vert \bp^{(t)} \Vert^2  + \Vert \nabla_{\blambda}\widetilde{\mathcal{L}}_t(\bx^{(t)},\blambda^{(t)}) \Vert^2  \right)  \notag \\
	& + \frac{1}{2 \eta}  \left( 4R^2 +  \verts{\blambda}^2 \right)  {+} \frac{1}{2\eta} \sum_{t=1}^{T} \frac{2 \eta^2}{\omega} \sum_{k=0}^{t-2} \left(  1 {-} \frac{\omega}{2} \right)^k \mathbb{E}\verts{\bp^{(t-k-1)}}^2 
\end{align*}
Using the property of the double sum similar to the updates from \eqref{eq:temp_double_sum1} to \eqref{eq:temp_double_sum2} in the last term of the above equation, 
\begin{align} \label{eq:proofb_9}
	&\sum_{t=1}^{T} \mathbb{E}	\left[\calH(\bx^{(t)},\blambda) - \calH((1-\alpha)\bx^{*}+\alpha \tilde{\by}_0,\blambda^{(t)})\right]  \notag \\
	& \leq 
	\frac{\eta}{2} \sum_{t=1}^{T} \left( \left(2 {+} \frac{4}{\omega^2}\right)\Vert \bp^{(t)} \Vert^2  + \Vert \nabla_{\blambda}\widetilde{\mathcal{L}}_t(\bx^{(t)},\blambda^{(t)}) \Vert^2  \right) + \frac{1}{2 \eta}  \mathbb{E} \left( 4R^2 +  \verts{\blambda}^2 \right)  
\end{align}
We now provide bounds for the first and second terms on the R.H.S. of \eqref{eq:proofb_9} in Proposition \ref{prop:interim_1} below. The proof of this proposition is provided in Appendix \ref{sec:app_bandit}. 

\begin{prop} \label{prop:interim_1}
	For the update steps given in Algorithm \ref{algob}, under Assumptions A.\ref{assump:convexity}-A.\ref{assump:g}, for any $t \in [T]$, we have:
	\begin{enumerate}[(i)]
		\item $\mathbb{E}\verts{\bp^{(t)}}^2 \leq 4d^2(1+m)G^2 + 4(1+m)\tilde{G}^2 \mathbb{E} \verts{\blambda^{(t)}}^2$
		\item $\mathbb{E} \verts{\nabla_{\blambda} \widetilde{\mathcal{L}}_t(\bx^{(t)},\blambda^{(t)}) }^2 \leq 2C^2 + 2 \delta^2 \eta^2 \mathbb{E} \verts{\blambda^{(t)}}^2$
	\end{enumerate}
\end{prop}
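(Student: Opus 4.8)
The plan is to handle the two bounds separately: (ii) is inherited almost verbatim from the sample-feedback analysis, while (i) is where the zeroth-order estimate must be controlled. For (ii), observe that the dual gradient of $\widetilde{\mathcal{L}}$ coincides with that of $\mathcal{L}$, since the surrogate objective $\tilde f_i$ does not involve $\blambda$: its $(i,j)$-th coordinate is $g_{ij}(\bx_i^{(t)},\bx_j^{(t)}) - \delta\eta\lambda_{ij}^{(t)}$, so $\nabla_{\blambda}\widetilde{\mathcal{L}}_t(\bx^{(t)},\blambda^{(t)}) = \bg(\bx^{(t)}) - \delta\eta\blambda^{(t)}$. Applying $\|\mathbf{a}+\mathbf{b}\|^2 \le 2\|\mathbf{a}\|^2 + 2\|\mathbf{b}\|^2$ together with $\|\bg(\bx^{(t)})\|^2 = \sum_{i=1}^n\sum_{j\in\mathcal{N}_i} g_{ij}(\bx_i^{(t)},\bx_j^{(t)})^2 \le C^2$ from Assumption~\ref{assump:g} yields (ii) at once; this is identical to Fact~\ref{fact:grad_lagrangian_bound}(a), so I would simply invoke that argument.

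For (i), I would work on the stacked vector node-by-node, writing $\|\bp^{(t)}\|^2 = \sum_{i=1}^n \|\bp_i^{(t)}\|^2$ and splitting each $\bp_i^{(t)}$ from \eqref{eq:bandit_p} into the two-point objective estimate $\bh_i^{(t)} := \frac{d}{2\zeta}\big[f_i(\bx_i^{(t)}+\zeta\bu_i^{(t)},\bxi_i^{(t)}) - f_i(\bx_i^{(t)}-\zeta\bu_i^{(t)},\bxi_i^{(t)})\big]\bu_i^{(t)}$ and the $|\mathcal{N}_i|$ constraint terms $2\lambda_{ij}^{(t)}\nabla_{\bx_i}g_{ij}(\bx_i^{(t)},\bx_j^{(t)})$. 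Viewing $\bp_i^{(t)}$ as a sum of $1+|\mathcal{N}_i|$ vectors, the inequality $\|\sum_{k=1}^{K}\mathbf{v}_k\|^2 \le K\sum_{k}\|\mathbf{v}_k\|^2$ gives $\|\bp_i^{(t)}\|^2 \le (1+|\mathcal{N}_i|)\big(\|\bh_i^{(t)}\|^2 + 4\sum_{j\in\mathcal{N}_i}(\lambda_{ij}^{(t)})^2\|\nabla_{\bx_i}g_{ij}\|^2\big)$. Bounding $1+|\mathcal{N}_i| \le 1+m$ uniformly, summing over $i$, using $\|\nabla_{\bx_i}g_{ij}\|^2 \le G_{ij}^2 \le \tilde{G}^2$ from \eqref{assump:gradient_constraint}, and collecting $\sum_{i=1}^n\sum_{j\in\mathcal{N}_i}(\lambda_{ij}^{(t)})^2 = \|\blambda^{(t)}\|^2$, the constraint part collapses to $4(1+m)\tilde{G}^2\|\blambda^{(t)}\|^2$, exactly the second term of the claim.

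The crux is the estimate term $\mathbb{E}\|\bh_i^{(t)}\|^2$, and this is precisely where Fact~\ref{prop:bound_f_diff} enters. Since $\|\bu_i^{(t)}\|_2 = 1$, we have $\|\bh_i^{(t)}\|^2 = \frac{d^2}{4\zeta^2}\big[f_i(\bx_i^{(t)}+\zeta\bu_i^{(t)},\bxi_i^{(t)}) - f_i(\bx_i^{(t)}-\zeta\bu_i^{(t)},\bxi_i^{(t)})\big]^2$. I would condition on $\bu_i^{(t)}$ and apply Fact~\ref{prop:bound_f_diff} with $\bu = \bx_i^{(t)}+\zeta\bu_i^{(t)}$ and $\bv = \bx_i^{(t)}-\zeta\bu_i^{(t)}$, which lie in $\mathcal{X}$ by the construction of $\widetilde{\mathcal{X}}$ (since $\bx_i^{(t)} \in \widetilde{\mathcal{X}}$); then $\|\bu-\bv\|^2 = 4\zeta^2$ and the expected squared difference over $\bxi_i^{(t)}$ is at most $16G_i^2\zeta^2$, so the $\zeta^2$ cancels and $\mathbb{E}\|\bh_i^{(t)}\|^2 \le 4d^2 G_i^2$ after taking the outer expectation over $\bu_i^{(t)}$. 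Summing gives $\mathbb{E}\|\bh^{(t)}\|^2 \le 4d^2 G^2$ and hence the first term $4d^2(1+m)G^2$. The main things to get right are the order of conditioning --- first integrate over $\bxi_i^{(t)}$ for fixed $\bu_i^{(t)}$, then over $\bu_i^{(t)}$ --- and verifying that the perturbed queries genuinely stay in $\mathcal{X}$ so that Fact~\ref{prop:bound_f_diff} is applicable; the remaining work is just bookkeeping of the $(1+m)$ and factor-$4$ constants.
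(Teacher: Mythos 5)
Your proposal is correct and follows essentially the same route as the paper's proof: the same node-wise split of $\bp_i^{(t)}$ with the $(1+|\mathcal{N}_i|)$ factor, the same bound $4\tilde G^2\sum_{j\in\mathcal{N}_i}(\lambda_{ij}^{(t)})^2$ on the constraint part, and the same direct triangle-inequality argument for (ii). The only cosmetic difference is that you invoke Fact~\ref{prop:bound_f_diff} (with $\bu=\bx_i^{(t)}+\zeta\bu_i^{(t)}$, $\bv=\bx_i^{(t)}-\zeta\bu_i^{(t)}$, both in $\mathcal{X}$ by the construction of $\widetilde{\mathcal{X}}$) to obtain the $16G_i^2\zeta^2$ two-point bound, whereas the paper re-derives that identical convexity estimate inline in \eqref{eq:proof_17}; both give $\mathbb{E}\verts{\bp^{(t)}}^2\le(1+m)\left[4d^2G^2+4\tilde G^2\,\mathbb{E}\verts{\blambda^{(t)}}^2\right]$.
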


%

Substituting the bounds from Proposition \ref{prop:interim_1} in \eqref{eq:proofb_9} and using that fact $\frac{2}{\omega^2} >1$, we have:
\begin{align} \label{eq:proofb_10}
	\sum_{t=1}^{T} \mathbb{E}	\left[\calH(\bx^{(t)},\blambda) - \calH((1-\alpha)\bx^{*}+\alpha \tilde{\by}_0,\blambda^{(t)})\right]  & \leq 
	\frac{1}{2 \eta}  \left( 4R^2 + \verts{\blambda}^2 \right)  + \eta T \left[  \frac{16}{\omega^2}  d^2 (1+m)G^2 + C^2  \right] \notag \\
	& \qquad  + \eta \left[ \frac{16}{\omega^2} (1+m) \tilde{G}^2 + \delta^2 \eta^2  \right] \sum_{t=1}^{T} \mathbb{E} \verts{\blambda^{(t)}}^2  
\end{align} 

We now express the L.H.S. of \eqref{eq:proofb_10} in terms of the Lagrangian $\widetilde{\mathcal{L}}$. This relation is provided in Proposition \ref{prop:interim_2} below, which is proved in Appendix \ref{sec:app_bandit}.
\begin{prop} \label{prop:interim_2}	
	For any $\blambda \in \mathbb{R}^{m} \text{ with } \blambda \succeq \mathbf{0}$, the updates of Algorithm \ref{algob} satisfy:
	\begin{align*}
		\sum_{t=1}^{T} \mathbb{E}	\left[\calH(\bx^{(t)},\blambda) - \calH((1-\alpha)\bx^{*}+\alpha \tilde{\by}_0,\blambda^{(t)})\right]  = \sum_{t=1}^{T} \mathbb{E}	\left[\widetilde{\mathcal{L}}_t(\bx^{(t)},\blambda) - \widetilde{\mathcal{L}}_t((1-\alpha)\bx^{*}+\alpha \tilde{\by}_0,\blambda^{(t)})\right] 
	\end{align*}
	where $\bx^{*}$ is the optimal parameter value for the objective \eqref{eq:main_obj}, and $\mathcal{H}$, $\widetilde{\mathcal{L}}$ are defined in \eqref{eq:bandit_vector_H} and \eqref{eq:bandit_vector_lagrangian}, respectively.
\end{prop}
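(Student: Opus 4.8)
The plan is to exploit the fact that $\calH$ and $\widetilde{\mathcal{L}}_t$ differ only by a term that is \emph{linear} in the primal variable and independent of the dual variable, and then to show that this extra term vanishes in expectation. Reading the definition \eqref{eq:bandit_vector_H} as a function of its arguments,
\begin{align*}
	\calH(\bx,\blambda) = \widetilde{\mathcal{L}}_t(\bx,\blambda) + \langle \bc^{(t)}, \bx \rangle, \qquad \bc^{(t)} := \bp^{(t)} - \nabla_{\bx}\widetilde{\mathcal{L}}_t(\bx^{(t)},\blambda^{(t)}),
\end{align*}
one checks directly that $\nabla_{\bx}\calH(\bx^{(t)},\blambda^{(t)}) = \bp^{(t)}$ and $\nabla_{\blambda}\calH = \nabla_{\blambda}\widetilde{\mathcal{L}}_t$, consistent with the gradient identities recorded after \eqref{eq:bandit_vector_H}. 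Since the correction $\bc^{(t)}$ does not depend on the argument $\blambda$, subtracting the two evaluations gives, by linearity of the inner product,
\begin{align*}
	\calH(\bx^{(t)},\blambda) &- \calH((1{-}\alpha)\bx^{*}{+}\alpha\tilde{\by}_0,\blambda^{(t)}) \\
	&= \widetilde{\mathcal{L}}_t(\bx^{(t)},\blambda) - \widetilde{\mathcal{L}}_t((1{-}\alpha)\bx^{*}{+}\alpha\tilde{\by}_0,\blambda^{(t)}) + \langle \bc^{(t)}, \bx^{(t)} - (1{-}\alpha)\bx^{*}{-}\alpha\tilde{\by}_0 \rangle.
\end{align*}
Thus the claim reduces to showing that $\sum_{t=1}^{T} \bbE\langle \bc^{(t)}, \bx^{(t)} - (1{-}\alpha)\bx^{*}{-}\alpha\tilde{\by}_0 \rangle = 0$.

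To establish this, I would condition on the natural filtration $\mathcal{F}_t$ generated by all randomness prior to the draws $\bu^{(t)}$ and $\bxi^{(t)}$ made in lines 9--10 of Algorithm~\ref{algob}. The target point $(1{-}\alpha)\bx^{*}{+}\alpha\tilde{\by}_0$ is deterministic, and $\bx^{(t)}$ is assembled in lines 2--6 purely from copy parameters fixed at earlier iterations, so both are $\mathcal{F}_t$-measurable. It therefore suffices to prove that $\bc^{(t)}$ is conditionally mean zero, i.e.\ that $\bp^{(t)}$ is a conditionally unbiased estimate of $\nabla_{\bx}\widetilde{\mathcal{L}}_t(\bx^{(t)},\blambda^{(t)})$. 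The constraint parts of the two vectors coincide exactly (using $\lambda_{ij}^{(t)}=\lambda_{ji}^{(t)}$ and the symmetry of $g_{ij}$, as in the ``Symmetry of dual updates'' paragraph), so only the objective-gradient part must be matched. For each node $i$, conditioning further on $\bxi_i^{(t)}$ and taking the expectation over $\bu_i^{(t)}\sim\mathcal{U}(\mathbb{S})$, the symmetric two-point estimate collapses, via the $\bu\mapsto-\bu$ symmetry of the sphere together with Fact~\ref{fact:bandit_interim1}(ii), to $\nabla_{\bx_i}\tilde{f}_i(\bx_i^{(t)},\bxi_i^{(t)})$; taking the outer expectation over $\bxi_i^{(t)}$ then matches $\bbE[\nabla_{\bx_i}\widetilde{\mathcal{L}}_t(\bx^{(t)},\blambda^{(t)})\mid\mathcal{F}_t]$ exactly, giving $\bbE[\bc^{(t)}\mid\mathcal{F}_t]=\mathbf{0}$.

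Finally, applying the tower property,
\begin{align*}
	\bbE\langle \bc^{(t)}, \bx^{(t)} - (1{-}\alpha)\bx^{*}{-}\alpha\tilde{\by}_0 \rangle = \bbE\big\langle \bbE[\bc^{(t)}\mid\mathcal{F}_t],\, \bx^{(t)} - (1{-}\alpha)\bx^{*}{-}\alpha\tilde{\by}_0 \big\rangle = 0
\end{align*}
for every $t$, and summing over $t=1,\dots,T$ yields the stated identity. The main obstacle is the conditional-unbiasedness step: one must track that both the perturbation direction $\bu^{(t)}$ and the (unobserved) sample $\bxi^{(t)}$ are fresh at time $t$, so the sphere-symmetry computation has to be carried out conditionally on $\bxi^{(t)}$ before averaging over it, and one must verify that the $\widetilde{\mathcal{L}}_t$ appearing in \eqref{eq:bandit_vector_lagrangian} carries the \emph{same} sample $\bxi^{(t)}$, so that the objective gradients cancel in expectation rather than leaving a residual bias that would break the equality.
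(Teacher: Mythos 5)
Your proposal is correct and follows essentially the same route as the paper's proof: decompose $\calH$ as $\widetilde{\mathcal{L}}_t$ plus a term linear in the primal argument, match the constraint parts of $\bp^{(t)}$ and $\nabla_{\bx}\widetilde{\mathcal{L}}_t$ via the dual symmetry $\lambda_{ij}^{(t)}=\lambda_{ji}^{(t)}$, use Fact~\ref{fact:bandit_interim1}(ii) to show $\bbE\bigl[\bp^{(t)}\mid \bx^{(t)},\bxi^{(t)},\blambda^{(t)}\bigr]=\nabla_{\bx}\widetilde{\mathcal{L}}_t(\bx^{(t)},\blambda^{(t)})$ (conditioning on the sample before averaging over $\bu^{(t)}$, exactly as the paper does), and kill the cross term by the tower property since $\bx^{(t)}$ and the comparator are measurable with respect to the conditioning. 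You also correctly read the evident typo in \eqref{eq:bandit_vector_H}, where the inner product should involve $\nabla_{\bx}\widetilde{\mathcal{L}}(\bx^{(t)},\blambda^{(t)})$ rather than $\widetilde{\mathcal{L}}(\bx^{(t)},\blambda^{(t)})$, consistent with the gradient identities stated immediately after it.
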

The relation in Proposition \ref{prop:interim_2} implies the following for \eqref{eq:proofb_10}:
\begin{align*}
	\sum_{t=1}^{T} \mathbb{E}	\left[\widetilde{\mathcal{L}}_t(\bx^{(t)},\blambda) - \widetilde{\mathcal{L}}_t((1-\alpha)\bx^{*}+\alpha \tilde{\by}_0,\blambda^{(t)})\right] 
	& \leq 
	\frac{1}{2 \eta}  \left( 4R^2 + \verts{\blambda}^2 \right)  + \eta T \left[ \frac{16}{\omega^2} d^2 (1+m)G^2 + C^2  \right] \notag \\
	& \quad + \eta \left[ \frac{16}{\omega^2} (1+m) \tilde{G}^2 + \delta^2 \eta^2  \right] \sum_{t=1}^{T} \mathbb{E} \verts{\blambda^{(t)}}^2  
\end{align*}

Using the definition of $\widetilde{\mathcal{L}}$ from \eqref{eq:bandit_vector_lagrangian} on the L.H.S. of the above, and rearranging the terms, we have:
\begin{align} \label{eq:bandit_interim1}
	&\sum_{t=1}^{T} \mathbb{E}	\left[\widetilde{f}(\bx^{(t)},\bxi^{(t)}) - \widetilde{f}((1-\alpha)\bx^{*}+\alpha \tilde{\by}_0,\bxi^{(t)})\right]  - \frac{\delta \eta T}{2}  \verts{\blambda}^2 + \left\langle \blambda, \mathbb{E} \left[ \sum_{t=1}^{T}  \bg(\bx^{(t)}) \right]   \right\rangle  \notag \\
	& - \mathbb{E} \left[ \sum_{t=1}^{T} \left\langle \blambda^{(t)},\bg((1{-}\alpha)\bx^{*}{+}\alpha \tilde{\by}_0)  \right\rangle \right] \notag \\
	& \leq 
	\frac{1}{2 \eta}  \left( 4R^2 + \verts{\blambda}^2 \right)  + \eta T \left[ \frac{16}{\omega^2} d^2 (1+m)G^2 + C^2  \right] + \eta \left[ \frac{16}{\omega^2} (1+m) \tilde{G}^2 + \delta^2 \eta^2 - \frac{\delta}{2}  \right] \sum_{t=1}^{T} \mathbb{E} \verts{\blambda^{(t)}}^2  
\end{align}

We now focus on the last term in \eqref{eq:bandit_interim1} which has a coefficient of $\left( \frac{16}{\omega^2}(1+m)\tilde{G}^2 + \delta^2\eta^2 - \frac{\delta}{2} \right) $. We will now choose the value of $\delta$ such that this coefficient is negative. This requires choosing a value of $\delta$ such that $2\eta^2 \omega^2 \delta^2  - \omega^2 \delta +  32(1+m)\tilde{G}^2   \leq 0$. Since this is quadratic in $\delta$, it can be easily checked that the following range of $\delta$ satisfies the desired inequality:
	\begin{align*}
		\delta \in \left[ \frac{1 - \sqrt{1 - \frac{256 \eta^2 (1+m)\tilde{G}^2}{\omega^2} }}{4 \eta^2}, \frac{1 + \sqrt{1 - \frac{256 \eta^2 (1+m)\tilde{G}^2}{\omega^2}}}{4 \eta^2} \right]
	\end{align*}
	To ensure that the values of the interval bounds are real, we require running the algorithm for $T \geq \frac{256a^2 (1+m)\tilde{G}^2}{\omega^2}$.
	Similar to what we had earlier in the the case of sample feedback, for $T \rightarrow \infty$ (i.e., $\eta \rightarrow 0$ for the choice $\eta = \frac{a}{\sqrt{T}}$), the left interval value of $\delta$ converges to a positive constant, meaning that we can still choose a finite value of $\delta$ in the infinitesimally small learning rate regime so that our bounds derived below are not vacuous. Choosing the value of $\delta$ in the prescribed range, and plugging it in \eqref{eq:bandit_interim1} yields:

\begin{align} \label{eq:proofb_11}
	&\sum_{t=1}^{T} \mathbb{E}	\left[\widetilde{f}(\bx^{(t)},\bxi^{(t)}) - \widetilde{f}((1-\alpha)\bx^{*}+\alpha \tilde{\by}_0,\bxi^{(t)})\right] - \frac{\delta \eta T}{2}  \verts{\blambda}^2 + \left\langle \blambda, \mathbb{E}  \sum_{t=1}^{T}  \bg(\bx^{(t)})   \right\rangle   - \mathbb{E} \left[ \sum_{t=1}^{T} \left\langle \blambda^{(t)},\bg((1{-}\alpha)\bx^{*}{+}\alpha \tilde{\by}_0)  \right\rangle \right] \notag \\
	&  \leq 
	\frac{1}{2 \eta}  \left( 4R^2 +  \verts{\blambda}^2 \right)  + \eta T \left[ \frac{16}{\omega^2} d^2 (1+m)G^2 + C^2  \right]   
\end{align} 
Our goal is to derive a bound for the sub-optimality of the function $F(\bx^{(t)})$. To this end, we will now provide bounds for the terms on the L.H.S. of \eqref{eq:proofb_11} in terms of the function $F$. We first consider the first term on the L.H.S. of \eqref{eq:proofb_11}. From the definitions of $f$ and $ \widetilde{f}$ provided in Fact \ref{fact:bandit_interim1}, we note:
\begin{align} \label{eq:interim_5}
	&\bbE \left[ \vert \widetilde{f}(\bx^{(t)} ,\bxi^{(t)} ) - f(\bx^{(t)}, \bxi^{(t)}) \vert \right] \notag \\
	& \stackrel{(a)}{=} \bbE \left[ \left| \sum_{i=1}^{n} f_i(\bx_i^{(t)} + \zeta \bu_i^{(t)} ,\bxi_i^{(t)} ) - f_i(\bx_i^{(t)}, \bxi_i^{(t)}) \right| \right] \notag \\
	& \stackrel{(b)}{\leq}   \bbE \left[ \sum_{i=1}^{n} \left|  f_i(\bx_i^{(t)} + \zeta \bu_i^{(t)} ,\bxi_i^{(t)} ) - f_i(\bx_i^{(t)}, \bxi_i^{(t)}) \right| \right] \notag \\
	& \stackrel{(c)}{\leq} \bbE \left[ \sum_{i=1}^{n} \sqrt{\bbE_{\bxi_i^{(t)}} \left[   f_i(\bx_i^{(t)} + \zeta \bu_i^{(t)} ,\bxi_i^{(t)} ) - f_i(\bx_i^{(t)}, \bxi_i^{(t)})  \right]^2} \right] 
\end{align}
where in (a), $\{\bu_i^{(t)}\}_{i=1}^{n}$ denote random vectors uniformly distributed over $\mathbb{B}$, (b) uses the triangle inequality, (c) uses the fact $\bbE [A] \leq \sqrt{\bbE[A^2]}$ via Jensen's inequality.

From Proposition \ref{prop:bound_f_diff} and the fact $\|\bu_i^{(t)}\|^2 = 1$ for all $i \in [n]$ (as they lie on the unit sphere $\mathbb{S}$), we have:
\begin{align} \label{eq:interim_4} 
	\bbE _{\bxi_i^{(t)}}[ f_i(\bx_i^{(t)} + \zeta \bu_i^{(t)} , \bxi_i^{(t)}) - f_i(\bx_i^{(t)} , \bxi_i^{(t)}) ]^2 \leq 4 G_i^2 \zeta^2  
\end{align}
Plugging the bound from \eqref{eq:interim_4} in \eqref{eq:interim_5} and noting that $\sum_{i=1}^{n} G_i \leq \sqrt{n} G$  (using the fact that $G^2 = \sum_{i=1}^{n} G_i^2 $) gives:
\begin{align*}
	\bbE \left[ \vert \widetilde{f}(\bx^{(t)} ,\bxi^{(t)} ) - f(\bx^{(t)}, \bxi^{(t)}) \vert \right] \leq 2 \zeta \sqrt{n}G
\end{align*}
Using Jensen's inequality for the L.H.S. of above equation and rearranging the terms finally yields:
\begin{align} \label{eq:interim_6}
	  \bbE [\widetilde{f}(\bx^{(t)} ,\bxi^{(t)} )]   \geq \bbE [F(\bx^{(t)})] - 2 \zeta \sqrt{n}G
\end{align}

The steps to bound the second term on the L.H.S. of \eqref{eq:proofb_11} are similar. We note that:
\begin{align} \label{eq:interim_8}
	&\bbE \left[ \vert \widetilde{f}((1-\alpha)\bx^{*}+\alpha \tilde{\by}_0,\bxi^{(t)}) - f(\bx^{*}, \bxi^{(t)}) \vert \right] \notag \\
	& \stackrel{(a)}{=} \bbE \left[ \left| \sum_{i=1}^{n} f_i((1-\alpha)\bx_i^{*}+\alpha \by_0 + \zeta \bu_i^{(t)},\bxi^{(t)}) - f_i(\bx_i^{*}, \bxi_i^{(t)}) \right| \right] \notag \\
	& \stackrel{(b)}{\leq}   \bbE \left[ \sum_{i=1}^{n} \left|  f_i((1-\alpha)\bx_i^{*}+\alpha \by_0 + \zeta \bu_i^{(t)},\bxi^{(t)}) - f_i(\bx_i^{*}, \bxi_i^{(t)}) \right| \right] \notag \\
	& = \bbE  \sum_{i=1}^{n} \bbE_{\bxi_i^{(t)}}  |  f_i((1{-}\alpha)\bx_i^{*} {+} \alpha \by_0 {+} \zeta \bu_i^{(t)},\bxi^{(t)}) - f_i(\bx_i^{*}, \bxi_i^{(t)}) |  \notag \\
	& \stackrel{(c)}{\leq} \bbE  \sum_{i=1}^{n} \left( \bbE_{\bxi_i^{(t)}} \left[  f_i((1-\alpha)\bx_i^{*} + \alpha \by_0 + \zeta \bu_i^{(t)},\bxi^{(t)}) - f_i(\bx_i^{*}, \bxi_i^{(t)}) \right]^2 \right)^{\nicefrac{1}{2}}  
\end{align}
where (a), (b) and (c) are the same arguments as what we used in arriving at \eqref{eq:interim_5}. Further using Proposition \ref{prop:bound_f_diff}, we have:
\begin{align} \label{eq:interim_7}
	& \bbE_{\bxi_i^{(t)}} \left[   f_i((1-\alpha)\bx_i^{*}+\alpha \by_0 + \zeta \bu_i^{(t)},\bxi^{(t)}) - f_i(\bx_i^{*}, \bxi_i^{(t)})  \right]^2  \leq 4 G_i^2 \verts{ -\alpha \bx_i^{*}+\alpha \by_0 + \zeta \bu_i^{(t)}  }^2 
\end{align}
Plugging in the bound from \eqref{eq:interim_7} into \eqref{eq:interim_8} and using Fact \ref{fact:proj_bound} for $\bx_i^{*}, \by_0 \in \mathcal{X}$ and $\verts{\bu_i^{(t)}} = 1$ for all $i \in [n]$, we have:
\begin{align*}
	& \bbE \left[ \vert \widetilde{f}((1-\alpha)\bx^{*}+\alpha \tilde{\by}_0,\bxi^{(t)}) - f(\bx^{*}, \bxi^{(t)}) \vert \right]  \leq 4G \alpha R + 2 \zeta G \sqrt{n}
\end{align*}
Using Jensen's inequality for the L.H.S. and rearranging:
\begin{align} \label{eq:interim_9}
	\bbE [\widetilde{f}((1-\alpha)\bx^{*}+\alpha \tilde{\by}_0,\bxi^{(t)})] \leq F(\bx^{*}) + 4G \alpha R + 2 \zeta G \sqrt{n}
\end{align}

Further, we can also simplify other terms on the L.H.S. of \eqref{eq:proofb_11}. We note that:
\begin{align} \label{eq:interim_10}
	 \sum_{t=1}^{T} \left\langle \blambda^{(t)},\bg((1{-}\alpha)\bx^{*}{+}\alpha \tilde{\by}_0)  \right\rangle  & = \sum_{t=1}^{T} \left\langle \blambda^{(t)},\bg(\bx^{*})  \right\rangle + \sum_{t=1}^{T} \left\langle \blambda^{(t)},\bg((1{-}\alpha)\bx^{*}{+}\alpha \tilde{\by}_0) - \bg(\bx^{*})  \right\rangle \notag \\
	 & \leq \sum_{t=1}^{T}  \verts{\blambda^{(t)}} \verts{\bg((1{-}\alpha)\bx^{*}{+}\alpha \tilde{\by}_0) - \bg(\bx^{*})  }
\end{align}
where to obtain the last inequality we have used the fact that $\langle \blambda^{(t)}, \bg(\bx^{*}) \rangle \leq 0$ for all $t \in [T]$ and the Cauchy-Schwarz inequality. For the second term in the product on the R.H.S. in \eqref{eq:interim_10}, using \eqref{assump:gradient_constraint} in Assumption \ref{assump:gradient} $g(\bx_i, \bx_j)$ are $G_{ij}$-Lipschitz for all $i,j \in [n]$, we have:
\begin{align} \label{eq:interim_11}
	 \verts{\bg((1{-}\alpha)\bx^{*}{+}\alpha \tilde{\by}_0) - \bg(\bx^{*})  }^2 & \leq \sum_{i=1}^{n} \sum_{j \in \mathcal{N}_i}^{n} G^2_{ij} \verts{{-}\alpha \bx^{*} {+} \alpha \tilde{\by}_0 }^2 
	  \leq 4 \alpha^2 R^2m\tilde{G}^2 
\end{align}
where $\tilde{G} := \max_{i \in [n], j \in \mathcal{N}_i } G_{ij}$ and the last inequality follows from noting that $\bx^{*}, \tilde{\by}_0 \in \mathcal{X}^n$ and using Fact \ref{fact:proj_bound}.
We now bound the first term in the product on the R.H.S. in \eqref{eq:interim_10}. From the update equation of $\blambda^{(t)}$ in line 13 of Algorithm \ref{algob}, we have:
\begin{align*}
	\verts{ \blambda^{(t+1)}}  \leq \verts{ \blambda^{(t)} + \eta  \nabla_{\blambda} \widetilde{\mathcal{L}}_t (\bx^{(t)} , \blambda^{(t)}) }  \leq (1-\delta \eta^2) \verts{\blambda^{(t)}} + \eta C
\end{align*}
where the second inequality follows from the gradient update for the dual variable \eqref{eq:sample_grad_dual}, the triangle inequality, the fact that $\delta \eta^2 \leq 1$ (since an upper bound for the range of allowed $\delta$ is $\frac{2}{4 \eta^2}$) and Assumption \ref{assump:g} to bound $\verts{\bg(\bx^{(t)})}_2$. Continuing the recursion till $t=1$, it can be shown that $\verts{\blambda^{(t)}} \leq \frac{C}{\delta \eta}$, $\forall t \in [T]$. Using this bound, and \eqref{eq:interim_11} in \eqref{eq:interim_10} leads to:
\begin{align} \label{eq:interim_13}
	\sum_{t=1}^{T} \left\langle \blambda^{(t)},\bg((1{-}\alpha)\bx^{*}{+}\alpha \tilde{\by}_0)  \right\rangle \leq \frac{2 \alpha R C \sqrt{m} \tilde{G} T }{\delta \eta}
\end{align} 
Finally, using the bounds from \eqref{eq:interim_6}, \eqref{eq:interim_9}, \eqref{eq:interim_13} in \eqref{eq:proofb_11} yields:
\begin{align} \label{eq:proofb_12}
	&\sum_{t=1}^{T} \mathbb{E}	\left[ F(\bx^{(t)}) - F(\bx^{*}) \right] - \frac{\delta \eta T}{2}  \verts{\blambda}^2  + \left\langle \blambda, \mathbb{E}  \sum_{t=1}^{T}  \bg(\bx^{(t)})   \right\rangle   - \mathbb{E} \left[ \sum_{t=1}^{T} \left\langle \blambda^{(t)},\bg((1{-}\alpha)\bx^{*}{+}\alpha \tilde{\by}_0)  \right\rangle \right] \notag \\
	&  \leq 
	\frac{1}{2 \eta}  \left( 4R^2 +  \verts{\blambda}^2 \right)  + \eta T \left[ \frac{16}{\omega^2} d^2 (1+m)G^2 + C^2  \right]  + 4G \alpha RT + 4 \zeta G \sqrt{n} T + \frac{2 \alpha RC \sqrt{m}\tilde{G}T}{\delta \eta}
\end{align}

Setting $\blambda = \frac{\left[ \mathbb{E} \left[ \sum_{t=1}^{T} \bg(\bx^{(t)})  \right]  \right]^{+}}{\delta \eta T + \frac{1}{\eta}}$, and
plugging it in \eqref{eq:proofb_12} gives:
\begin{align} \label{eq:proofb_13}
	&\sum_{t=1}^{T} \left(  \mathbb{E} \left[ F(\bx^{(t)}) \right] - F(\bx^{*})  \right) + \sum_{i=1}^{n} \sum_{j \in \mathcal{N}_i} \frac{\left( \left[\mathbb{E} \left[ \sum_{t=1}^{T} g_{ij}(\bx_i^{(t)} , \bx_j^{(t)})  \right] \right]^{+} \right)^2}{2\left( \delta \eta  T + \frac{1}{ \eta} \right)}  \notag \\
	&\leq \frac{2R^2}{ \eta}    + \eta T \left[ \frac{16}{\omega^2} d^2 (1+m)G^2 + C^2  \right] +  2 \sqrt{m} \tilde{G}\alpha R \frac{CT}{\delta \eta}  + 4 \alpha R GT + 4 \zeta \sqrt{n}GT
\end{align}
Dividing both sides of \eqref{eq:proofb_13} by $T$ and noting that the second term on the L.H.S. of \eqref{eq:proofb_13} is positive, we can bound the time-average sub-optimality of $F$ as:
\begin{align*}
	& \sum_{t=1}^{T} \frac{\left(  \mathbb{E} \left[ F(\bx^{(t)}) \right] - F(\bx^{*})  \right)}{T} \leq  \frac{2R^2}{ \eta T}    + \eta \left[ \frac{16}{\omega^2} d^2 (1+m)G^2  \right] + C^2 \eta  +  2 \sqrt{m} \tilde{G}\alpha R \frac{C}{\delta \eta} + 4 \alpha R G + 4 \zeta \sqrt{n}G
\end{align*}

Using the convexity of $F$ and setting $\eta = \frac{a}{\sqrt{T}}$, $\zeta = \frac{1}{T}$ and $\alpha = \frac{1}{r T}$ for some positive constant $a$, $r$, then for time averaged parameter $\blx^{(T)}_{avg} := \frac{1}{T} \sum_{t=1}^{T} \bx^{(t)} $ we have:
\begin{align} \label{eq:opt_resultb}
	&\mathbb{E}[F(\blx^{(T)}_{avg})] {-}   F(\bx^{*}) \leq  \frac{2R^2}{ a \sqrt{T}}    + \frac{a}{\sqrt{T}} \left[ \frac{16}{\omega^2} d^2 (1{+}m)G^2 {+} C^2  \right]   +  \frac{2 \sqrt{m} \tilde{G}RC}{\delta a r \sqrt{T}} + \frac{4 R G}{rT} + \frac{4 \sqrt{n}G}{T}
\end{align}
This concludes the proof for the suboptimality of the function $F$ given in \eqref{eq:thmb_1} in Theorem~\ref{thm:opt_bandit}. 

We now consider the expected constraint violations. From Fact \ref{fact:F_diff_bound}, we have that $\forall \bx \in \mathcal{X}^n,$  $\mathbb{E}[F(\bx)]-F(\bx^{*}) > -4GR$. Using this relation in \eqref{eq:proofb_13} gives:
\begin{align*}
	 &\sum_{i=1}^{n} \sum_{j \in \mathcal{N}_i} \left( \left[\mathbb{E} \left[ \sum_{t=1}^{T} g_{ij}(\bx_i^{(t)} , \bx_j^{(t)})  \right] \right]^{+} \right)^2  
	 \leq \\
	 &  T \left[  \left( \frac{32}{\omega^2} d^2 (1+m)G^2 + 2C^2  \right)  + 4 \sqrt{m} \tilde{G}\alpha R \frac{C}{\delta \eta^2} + \frac{8(\alpha R + \zeta \sqrt{n}) G}{\eta} + 4 R^2 \delta + \frac{8GR}{\eta}   \right] \\
	 & + T^2 \left[  \delta \eta^2 \left( \frac{32}{\omega^2} d^2 (1+m)G^2 + 2C^2  \right)   +  4 \sqrt{m} \tilde{G} \alpha R C  + 8 \delta \eta (\alpha R + \zeta \sqrt{n}) G  + 8GR\delta \eta \right] + \frac{4R^2}{\eta^2}
\end{align*}
Note that the above bound also holds for a given $i \in [n]$ and $j \in \mathcal{N}_i$, that is, the R.H.S. of the above equation is also a bound for the term $\left( \left[\mathbb{E} \left[ \sum_{t=1}^{T} g_{ij}(\bx_i^{(t)} , \bx_j^{(t)})  \right] \right]^{+} \right)^2$.
Taking the square root of both sides and using the fact that $\sqrt{\sum_{i=1}^{n} c_i} \leq \sum_{i=1}^{n} \sqrt{c_i}$ for positive $c_1,\hdots,c_n$, we get:
\begin{align*}
	&\mathbb{E} \left[ \sum_{t=1}^{T} g_{ij}(\bx_i^{(t)} , \bx_j^{(t)})  \right] \leq \frac{2R}{\eta}  \\
	&   + \sqrt{T}  \left[  \left( \frac{32}{\omega^2} d^2 (1+m)G^2 + 2C^2  \right)  + 4 \sqrt{m} \tilde{G}\alpha R \frac{C}{\delta \eta^2} + \frac{8(\alpha R + \zeta \sqrt{n}) G}{\eta} + 4 R^2 \delta + \frac{8GR}{\eta}    \right]^{\nicefrac{1}{2}} \\
	& \qquad + T \left[  \delta \eta^2 \left( \frac{32}{\omega^2} d^2 (1+m)G^2 + 2C^2  \right)   +  4 \sqrt{m} \tilde{G} \alpha R C   + 8 \delta \eta (\alpha R + \zeta \sqrt{n}) G + 8GR\delta \eta  \right]^{\nicefrac{1}{2}}
\end{align*}
Dividing both sides of the above by $T$, using the convexity of constraint function $g_{ij}$, and substituting $\eta = \frac{a}{\sqrt{T}}$, $\zeta = \frac{1}{T}$ and $\alpha = \frac{1}{rT}$, finally gives:
\begin{align*}
	&\mathbb{E} \left[ g_{ij}(\blx_{i,avg}^{(T)} , \blx_{j,avg}^{(T)})  \right] \\
	&  \leq \frac{1}{T^{\nicefrac{1}{4}}} \left[\sqrt{\frac{8GR}{a}} + \sqrt{ \frac{8 \delta a ( R + r \sqrt{n}) G}{r} + 8GR\delta a    } \right] + \frac{1}{\sqrt{T}}    \sqrt{ \left( \frac{32}{\omega^2} d^2 (1+m)G^2 + 2C^2  \right)  +  \frac{4 \sqrt{m} \tilde{G} RC}{r \delta a^2}    + 4 R^2 \delta }  \\
	& \quad  +  \frac{1}{\sqrt{T}}  \sqrt{   \delta a^2 \left( \frac{32}{\omega^2} d^2 (1+m)G^2 + 2C^2  \right)   +  \frac{4 \sqrt{m} \tilde{G} R C}{r}   }  + \frac{1}{T^{\nicefrac{3}{4}}} \sqrt{  \frac{8(R+r\sqrt{n})G}{ra}  }
\end{align*}

This concludes the proof of \eqref{eq:thmb_2} in Theorem \ref{thm:opt_bandit}.
$\hfill \hspace{-0.6cm}\square$

\section{Experiments} \label{sec:expts}
We provide simulations to demonstrate the effectiveness of our proposed scheme for communication efficient optimization.

\begin{figure*}[htp]
	\centering
	\begin{subfigure}{.22\textwidth}
		\centering
		\includegraphics[scale=0.27]{./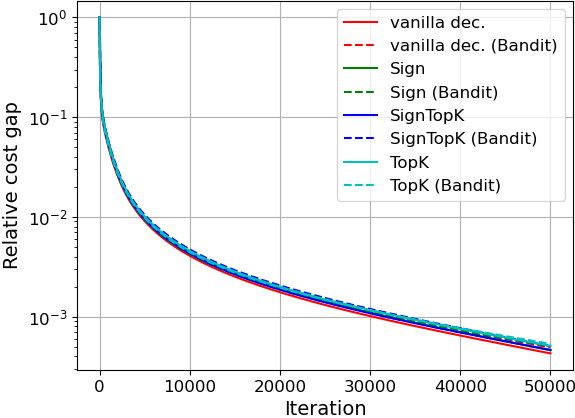}
		\caption{Comparison of relative cost gap $\frac{F(\bar{\mathbf{x}}^{(t)}) - F(\mathbf{x}^{*}) }{F(\bar{\mathbf{x}}^{(1)}) - F(\mathbf{x}^{*})}$ at iteration~$t$.}
		\label{fig1:rel_subopt}
	\end{subfigure} \hfill
	 \begin{subfigure}{.22\textwidth}
	 	\centering
	 	\includegraphics[scale=0.27]{./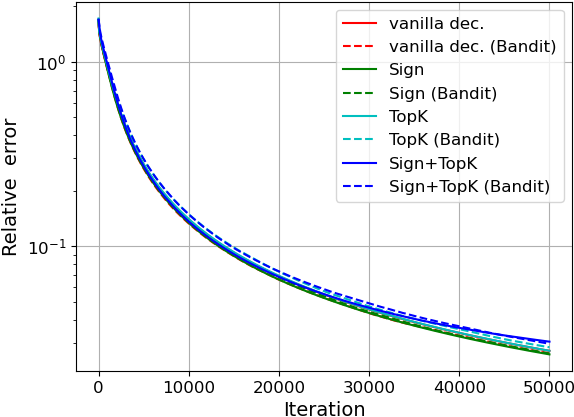}
	 	\caption{Comparison of relative parameter error $\frac{ \Vert \bar{\mathbf{x}}^{(t)} -\mathbf{x}^{*} \Vert }{ \Vert \mathbf{x}^{*} \Vert }$ at iteration $t$.}
	 	\label{fig1:rel_err}
	 \end{subfigure} \hfill
 \begin{subfigure}{.22\textwidth}
 	\centering
 	\includegraphics[scale=0.27]{./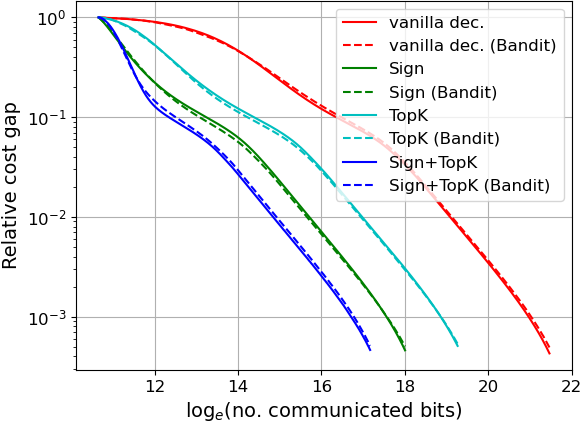}
 	\caption{Comparison of relative cost gap comparison for number of bits communicated for different schemes.}
 	\label{fig1:comm-subopt}
 \end{subfigure} \hfill
\begin{subfigure}{.22\textwidth}
	\centering
	\includegraphics[scale=0.27]{./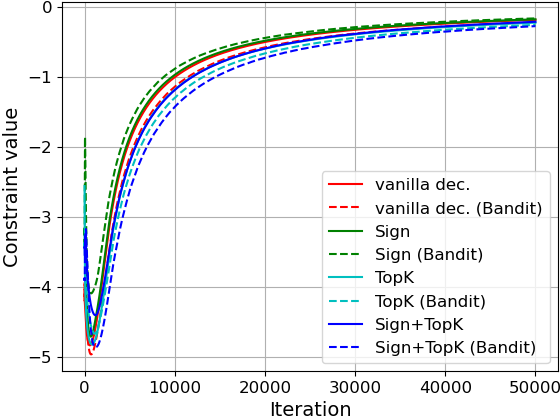}
	\caption{Constraint value $g_{ij}(\bar{\mathbf{x}}^{(t)}_i,\bar{\mathbf{x}}^{(t)}_j )$ for a randomly chosen edge $(i,j)$.}
	\label{fig1:vio}
\end{subfigure}
	\centering
	\caption{Performance comparison for various schemes on a decentralized QCQP objective in \eqref{eq:qcqp}}
	\label{fig1:main_qcqp}
\end{figure*}

\subsection{Setup and Hyperparameters}
We consider decentralized optimization on a randomly generated Erdos-Renyi graph of $n=30$ nodes with an edge probability of 0.15. For each node $i \in [n]$, we consider a quadratic objective given by $f_i(\bx) = \bx^{T} \mathbf{A}_i \bx + \mathbf{b}_i^{T}\bx $ where $\mathbf{A}_i \in \mathbb{R}^{10 \times 10}$ is sampled from a Wishart distribution with 10 degrees of freedom identity scaling matrix, and vector $\mathbf{b}_i$ is sampled from a Gaussian distribution with mean and variance drawn uniformly at random from the interval [0,1]. We consider the feasible parameter space $\mathcal{X}$ to be the Euclidean ball of radius $\frac{40}{\sqrt{30}}$ centered at the origin. For each $i \in [n], j \in \mathcal{N}_i$, we model the constraints on the node parameters as $g_{ij}(\bx_i,\bx_j) = \Vert \bx_i - \bx_j \Vert^2 +c_{ij} $ where $c_{ij}$ is independently drawn uniformly at random from $[-5,-3]$. The overall objective is thus given by:
\begin{align} \label{eq:qcqp}
	&\min_{ \{\bx_1,\dots,\bx_n\} \in \mathcal{X} } F(\bx) =  \sum_{i=1}^{n} \bx^{T}_i \mathbf{A}_i \bx_i + \mathbf{b}_i^{T}\bx_i \\
	&\text{s.t. } \Vert \bx_i - \bx_j \Vert^2_2 +c_{ij} \leq 0, \quad \forall i \in [n],j \in \mathcal{N}_i \notag 
\end{align}
where $\bx$ denotes concatenation of $\{ \bx_i,\hdots,\bx_n\}$. Note that choosing $c_{ij}\leq 0$ for all $i \in [n], j \in \mathcal{N}_i$ implies that the above QCQP has a non-empty feasible set. We take the learning rate as $\eta=0.001$, and choose $\delta = 100$, and run all considered schemes for a total of $5\times 10^{4}$ iterations. For gradient estimation in case of bandit feedback, we take $\zeta = 10^{-4}$. 

\subsection{Results}
The simulation results for optimizing objective \eqref{eq:qcqp} are presented in Figure~\ref{fig1:main_qcqp}, where we compare vanilla decentralized (no compression) algorithm with our proposed compressed optimization procedure using $Sign$ \cite{KQSJ19}, $TopK$ \cite{SCJ18} and composed $Sign+TopK$ \cite{BDKD19} compression operators. Schemes with \textit{`Bandit'} in parenthesis indicate those implemented via Algorithm~\ref{algob} for the case of gradient estimation in bandit feedback, and via Algorithm~\ref{algo} with sample feedback otherwise.  Figure \ref{fig1:rel_subopt} shows the relative cost gap for the objective given by
 $\frac{F(\bar{\mathbf{x}}^{(t)}) - F(\mathbf{x}^{*}) }{F(\bar{\mathbf{x}}^{(1)}) - F(\mathbf{x}^{*})}$, and Figure \ref{fig1:rel_err} shows the difference of the parameter from the optimal value normalized to the latter, given by $\frac{ \Vert \bar{\mathbf{x}}^{(t)} -\mathbf{x}^{*} \Vert }{ \Vert \mathbf{x}^{*} \Vert }$ for iteration $t$. From these figures, we see that schemes with compression, including the ones implemented via bandit feedback, effectively perform the same as uncompressed vanilla SGD to minimize the objective. The benefit of our proposed scheme can be seen in Figure \ref{fig1:comm-subopt}, where we show the relative cost gap as a function of the number of bits communicated among the nodes, assuming precision of 32bit floats. To achieve a target relative cost gap of around $10^{-3}$, compressed schemes use significantly fewer bits than vanilla decentralized training, saving a factor of about $7 \times$ with $TopK$ compression, factor of $30 \times$ when using $Sign$ compression operation, and a factor of around $50 \times$ for the composed $Sign+TopK$ compression operator. Further, in Figure \ref{fig1:vio} we plot the value of the constraint  $g_{ij}(\bar{\mathbf{x}}^{(t)}_i,\bar{\mathbf{x}}^{(t)}_j )  $ for a randomly chosen $i\in [n]$ and $j \in [n]$. The constraint value settles to a negative value, which implies that each scheme arrives at an objective value lying in the feasible space of the problem \eqref{eq:qcqp}. 
 
 In conclusion, our proposed schemes in Algorithms \ref{algo} and \ref{algob} for communication efficient decentralized optimization provide performance similar to that in the full precision vanilla decentralized method, while saving substantially in the total number of bits communicated among the nodes during the optimization process.

\section*{Acknowledgment}
{This research was supported in part by  the Army Research Laboratory under Cooperative Agreement Number W911NF-17-2-0196.
	 The views and conclusions contained in this document are those of the authors and should not be interpreted as representing the official policies, either expressed or implied, of the Army Research Laboratory or the U.S. Government. The U.S. Government is authorized to reproduce and distribute reprints for Government purposes notwithstanding any copyright notation here on.}

\bibliographystyle{alpha}

\begin{thebibliography}{199}
	
	\bibitem{NO09a}
	A. Nedic and A. Ozdaglar, ``Distributed subgradient methods for multiagent optimization," {\it IEEE Trans. Autom. Control}, vol. 54, no. 1, pp. 48-61, Jan. 2009.
	
	\bibitem{YLY16}
	K. Yuan, Q. Ling, and W. Yin, ``On the convergence of decentralized gradient descent,” {\it SIAM J. Optim.}, vol. 26, no. 3, pp. 1835-1854, 2016.
	
	\bibitem{DAW12}
	J. C. Duchi, A. Agarwal, and M. J. Wainwright, ``Dual averaging for distributed optimization: Convergence analysis and network scaling,” {\it IEEE Trans. Autom. Control}, vol. 57, no. 3, pp. 592-606, March 2012.
	
	\bibitem{SLYWY14}
	W. Shi, Q. Ling, K. Yuan, G. Wu, and W. Yin, ``On the linear convergence of the ADMM in decentralized consensus optimization,” {\it IEEE Trans. Signal Process.}, vol. 62, no. 7, pp. 1750-1761, April 2014.
	
	\bibitem{SLWY15}
	W. Shi, Q. Ling, G. Wu, and W. Yin, ``EXTRA: An exact first-order algorithm for decentralized consensus optimization,” {\it SIAM J. Optim.}, vol. 25, no. 2, pp. 944-966, 2015.
	
	\bibitem{SVKB17}
	M.O. Sayin, N.D. Vanli, S.S. Kozat, and T. Ba\c{s}ar, ``Stochastic subgradient algorithms for strongly convex optimization over distributed networks,” {\it IEEE Transactions on Network Science and Engineering}, 4(4):248-260, October-December 2017. 
	
	\bibitem{LLZ20}
	G. Lan, S. Lee, and Y. Zhou, ``Communication-efficient algorithms for decentralized and stochastic optimization,” {\it Math. Program.}, vol. 180, pp. 237-284, 2020.
	
	\bibitem{MC14}
	D. Mateos-Nunez and J. Cort\'{e}s, ``Distributed online convex optimization over jointly connected digraphs,” {\it IEEE Trans. Netw. Sci. Eng.}, vol. 1, no. 1, pp. 23-37, January–June 2014.
	
	\bibitem{AGL17}
	M. Akbari, B. Gharesifard, and T. Linder, ``Distributed online convex optimization on time-varying directed graphs,” {\it IEEE Trans. Control Netw. Syst.}, vol. 4, no. 3, pp. 417-428, September 2017.
	
	\bibitem{CRS14}
	J. Chen, C. Richard, and A. H. Sayed, ``Multitask diffusion adaptation over networks,” {\it IEEE Trans. Signal Process.}, vol. 62, no. 16, pp. 4129-4144, August 2014.
	
	\bibitem{NRFS16}
	R. Nassif, C. Richard, A. Ferrari, and A. H. Sayed, ``Multitask diffusion adaptation over asynchronous networks,” {\it IEEE Trans. Signal Process.}, vol. 64, no. 11, pp. 2835-2850, June 2016.
	
	\bibitem{KSR17}
	A. Koppel, B. M. Sadler, and A. Ribeiro, ``Proximity without consensus in online multiagent optimization,” {\it IEEE Trans. Signal Process.}, vol. 65, no. 12, pp. 3062-3077, June 2017.
	
	\bibitem{BKR19}
	A. S. Bedi, A. Koppel, and K. Rajawat, ``Asynchronous saddle point algorithm for stochastic optimization in heterogeneous networks,” {\it IEEE Trans. Signal Process.}, vol. 67, no. 7, pp. 1742-1757, April 2019.
	
	\bibitem{KBS07} 
	A. Kashyap, T. Ba\c{s}ar, and R. Srikant,  ``Quantized consensus,” {\it Automatica}, 43(7):1192-1203, July 2007.
	
	\bibitem{EB16} 
	S.R. Etesami and T. Ba\c{s}ar, ``Convergence time for unbiased quantized consensus over static and dynamic networks,” {\it IEEE Transactions on Automatic Control}, 61(2):443-455, February 2016.
	
	\bibitem{BEO16} 
	T. Ba\c{s}ar, S.R. Etesami, and A. Olshevsky, ``Convergence time of quantized Metropolis consensus over time-varying networks,” {\it IEEE Transactions on Automatic Control}, 61(12):4048-4054, December 2016.
	
	\bibitem{ELB16} 
	M. El Chamie, J. Liu, and T. Ba\c{s}ar, ``Design and analysis of distributed averaging with quantized communication,” {\it IEEE Transactions on Automatic Control}, 61(12):3870-3884, December 2016.
	
	\bibitem{ZC16}
	S. Zhu and B. Chen, ``Quantized consensus by the ADMM: Probabilistic versus deterministic quantizers,” {\it IEEE Trans. Signal Processing}, vol. 64, no. 7, pp. 1700-1713, April 2016
	
	\bibitem{ZYB19}
	J. Zhang, K. You, and T. Ba\c{s}ar, ``Distributed discrete-time optimization in multi-agent networks using only sign of relative state,” {\it IEEE Trans. Autom. Control}, vol. 64, no. 6, pp. 2352-2367, June 2019.
	
	\bibitem{CB21a} 
	X. Cao and T. Ba\c{s}ar, ``Decentralized online convex optimization based on signs of relative states,” {\it Automatica}, 129:109676, July 2021. 
	
	\bibitem{CB21b} 
	X. Cao and T. Ba\c{s}ar, ``Decentralized online convex optimization with event-triggered communications,” {\it IEEE Transactions on Signal Processing}, 69:284-299, 2021. 
	
	\bibitem{AHJKKR18}
	D. Alistarh, T. Hoefler, M. Johansson, S. Khirirat, N. Konstantinov, and C. Renggli, ``The convergence of sparsified gradient methods,” {\it Advances in Neural Information Processing Systems}, vol. 31, 2018.
	
	\bibitem{SCJ18}
	S. U. Stich, J.-B. Cordonnier, and M. Jaggi, ``Sparsified SGD with memory,” {\it Advances in Neural Information Processing Systems}, vol. 31, 2018, pp. 4452-4463.
	
	\bibitem{WWLZ18}
	J. Wangni, J. Wang, J. Liu, and T. Zhang, ``Gradient sparsification for communication-efficient distributed optimization,” {\it Advances in Neural Information Processing Systems}, vol. 31, 2018.
	
	
	\bibitem{RN05}
	M. G. Rabbat and R. D. Nowak, ``Quantized incremental algorithms for distributed optimization,” {\it IEEE J. Sel. Areas Commun.}, vol. 23, no. 4, pp. 798-808, April 2005.
	
	\bibitem{NOOT08}
	A. Nedic, A. Olshevsky, A. Ozdaglar, and J. N. Tsitsiklis, ``Distributed subgradient methods and quantization effects,” in {\it Proc. 47th IEEE Conf. Decision \& Control}, 2008, pp. 4177-4184.
	
	\bibitem{RMHP19}
	A. Reisizadeh, A. Mokhtari, H. Hassani, and R. Pedarsani, ``An exact quantized decentralized gradient descent algorithm,”  {\it IEEE Trans. Signal Process.}, vol. 67, no. 19, pp. 4934-4947, October 2019.
	
	\bibitem{AGLTV17}
	D. Alistarh, D. Grubic, J. Li, R. Tomioka, and M. Vojnovic, ``QSGD: Communication-efficient SGD via gradient quantization and encoding,” in {\it Proc. Adv. Neural Inf. Process. Syst.}, 2017, pp. 1709-1720.
	
	\bibitem{KSJ19}
	A. Koloskova, S. U. Stich, and M. Jaggi, ``Decentralized stochastic optimization and gossip algorithms with compressed communication,” in {\it Proc. Int. Conf. Mach. Learn.}, 2019, pp. 3478-3487.
	
	\bibitem{WHHZ18}
	J. Wu, W. Huang, J. Huang, and T. Zhang, “Error compensated quantized SGD and its applications to large-scale distributed optimization,” in International Conference on Machine Learning, pp. 5325-5333, 2018.
	
	\bibitem{BDKD19}
	D. Basu, D. Data, C. Karakus, and S. N. Diggavi, ``Qsparse-local-SGD: Distributed SGD with quantization, sparsification, and local computations,” {\it Advances in Neural Information Processing Systems}, vol. 32, 2019.
	
	\bibitem{CB20}
	X. Cao and T. Ba\c{s}ar, ``Decentralized multi-agent stochastic optimization with pairwise constraints and quantized communications,” {\it IEEE Trans. on Signal Processing}, vol. 68, pp. 3296-3311, 2020.
	
	\bibitem{AHU58}
	K. J. Arrow, L. Hurwicz, and H. Uzawa, {\it Studies in Linear and Non-linear Programming}, New York, NY, USA: Cambridge Univ. Press, 1958.
	
	\bibitem{NO09b}
	A. Nedi{\' c} and A. Ozdaglar, “Subgradient methods for saddle-point problems,” {\it J. Optim. Theory Appl.}, vol. 142, no. 1, pp. 205-228, 2009.
	
	\bibitem{CNS14}
	T.-H. Chang, A. Nedi{\' c}, and A. Scaglione, ``Distributed constrained optimization by consensus-based primal-dual perturbation method,”  {\it IEEE Trans. Autom. Control}, vol. 59, no. 6, pp. 1524-1538, June 2014.
	
	\bibitem{EZCLR19}
	M. Eisen, C. Zhang, L. F. Chamon, D. D. Lee, and A. Ribeiro, ``Learning optimal resource allocations in wireless systems,” {\it IEEE Trans. Signal Process.}, vol. 67, no. 10, pp. 2775-2790, May 2019.
	
	\bibitem{MJY12}
	M. Mahdavi, R. Jin, and T. Yang, ``Trading regret for efficiency: Online convex optimization with long term constraints,” {\it J. Mach. Learn. Res.}, vol. 13, pp. 2503-2528, 2012.
	
	\bibitem{CL19}
	X. Cao and K. J. R. Liu, ``Online convex optimization with time-varying constraints and bandit feedback,” {\it IEEE Trans. Autom. Control}, vol. 64, no. 7, pp. 2665-2680, July 2019.
	
	\bibitem{FKM05}
	A.D. Flaxman, A. T. Kalai, and H. B. McMahan, ``Online convex optimization in the bandit setting: Gradient descent without a gradient,” in {\it Proc. 16th Annu. ACM-SIAM Symp. Discrete Algorithms}, 2005, pp. 385-394.
	
	\bibitem{ADX10}
	A. Agarwal, O. Dekel, and L. Xiao, ``Optimal algorithms for online convex optimization with multi-point bandit feedback,” in {\it Proc. 23rd Annu. Conf. Learn. Theory}, 2010, pp. 28-40.
	
	\bibitem{DJWW15}
	J. C. Duchi, M. I. Jordan, M. J. Wainwright, and A. Wibisono, ``Optimal rates for zero-order convex optimization: The power of two function evaluations,” {\it IEEE Trans. Inf. Theory}, vol. 61, no. 5, pp. 2788-2806, May 2015.
	
	\bibitem{S17}
	O. Shamir, ``An optimal algorithm for bandit and zero-order convex optimization with two-point feedback,”  {\it  J. Mach. Learn. Res.}, vol. 18, no. 52, pp. 1-11, 2017.
	
	\bibitem{LKCTCA18}
	S. Liu, B. Kailkhura, P.-Y. Chen, P. Ting, S. Chang, and L. Amini, ``Zeroth-order stochastic variance reduction for nonconvex optimization,” in {\it Proc. Conf. Workshop Neural Inf. Process. Syst.}, 2018, pp. 3727-3737.
	
	\bibitem{HHG19}
	D. Hajinezhad, M. Hong, and A. Garcia, ``ZONE: Zeroth order nonconvex multi-agent optimization over networks,”  {\it IEEE Trans. Autom. Control}, vol. 64, no. 10, pp. 3995-4010, October 2019.
	
	\bibitem{KQSJ19}
	S. P. Karimireddy , Q. Rebjock, S. Stich, and M. Jaggi, ``Error feedback fixes signsgd and other gradient compression schemes," in {\it International Conference on Machine Learning}, pp. 3252-3261. PMLR, 2019.
	
	\bibitem{SDGD21}
	N. Singh, D. Data, J. George and S. Diggavi, ``SQuARM-SGD: Communication-Efficient Momentum SGD for Decentralized Optimization.", {\it IEEE Journal on Selected Areas in Information Theory}, 2021, 2(3), pp.954-969.
	
	\bibitem{MPPRRJ17}
	H. Mania, X. Pan, D. Papailiopoulos, B. Recht, K. Ramchandran and M. Jordan, ``Perturbed iterate analysis for asynchronous stochastic optimization", {\it SIAM Journal on Optimization}, vol. 27, np. 4, pp. 2202-2229, 2017.
	
	\bibitem{CLG17}
	T. Chen, Q. Ling and G. Giannakis, ``An online convex optimization approach to proactive network resource allocation", in {\it IEEE Transactions on Signal Processing}, vol. 65, no. 24, pp. 6350-6364, 2017.
	
	
	\bibitem{NN13}
	I. Necoara and V. Nedelcu, ``Rate analysis of inexact dual first-order methods application to dual decomposition", in {\it IEEE Transactions on Automatic Control}, vol. 59, no. 5, pp. 1232-1243, 2013.

	
	\bibitem{YN17}
	H. Yu and MJ. Neely, ``A Simple Parallel Algorithm with an O(1/t) Convergence Rate for General Convex Programs" in {\it SIAM Journal on Optimization}, vol. 27, no. 2, pp. 759-783, 2017.
	
	\bibitem{SDGD20}
	 N. Singh, D. data, J. George and S. Diggavi, ``SPARQ-SGD: Event-Triggered and Compressed Communication in Decentralized Optimization", in {\it IEEE Conference on Decision and Control} , pp. 3449-3456, 2020.
	
	
\end{thebibliography}

\clearpage
\newpage
\appendix
\section{Omitted Technical Details from Section \ref{sec:sample_feedback}} \label{sec:app_feedback}

\begin{lemma*} [Restating Lemma \ref{lem:proof_sample_init}]
	Consider the update steps in Algorithm~\ref{algo} with learning rate $\eta$ and parameter $\delta \geq 0$. Under assumptions~A.\ref{assump:X}-A.\ref{assump:g}, for any $\bx \in \mathcal{X}^{n}$ and $\blambda \in \mathbb{R}^m$ with $\blambda \succeq \mathbf{0}$, the summation of the Lagrangian function satisfies:
	\begin{align*} 
		\sum_{t=1}^{T} \mathbb{E}\left(\mathcal{L}(\bx^{(t)}, \blambda) - \mathcal{L}(\bx, \blambda^{(t)})\right) & \leq \frac{1}{2 \eta} \left(  \verts{\blambda }^2 + 4R^2 \right)  + \eta T \left( (1+m)G^2 + C^2 \right) + \frac{1}{2\eta} \sum_{t=1}^{T}    \bbE \verts{ \bx^{(t)} - \btx^{(t)} }^2  \notag \\
		& \quad + \eta \left( (1+m)\tilde{G}^2 + \delta^2\eta^2 \right)  \sum_{t=1}^{T} \mathbb{E}[ \Vert\blambda^{(t)}\Vert^2  ]
	\end{align*}
	where constants $G,C, \tilde{G}$ and $R$ are as defined in assumptions~A.\ref{assump:X}-A.\ref{assump:g}.
\end{lemma*}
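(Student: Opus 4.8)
The plan is to run the standard primal--dual saddle-point telescoping argument, splitting the gap $\mathcal{L}(\bx^{(t)},\blambda)-\mathcal{L}(\bx,\blambda^{(t)})$ into a primal and a dual piece by inserting $\mathcal{L}(\bx^{(t)},\blambda^{(t)})$. The primal gap $\mathcal{L}(\bx^{(t)},\blambda^{(t)})-\mathcal{L}(\bx,\blambda^{(t)})$ is controlled using convexity of $\mathcal{L}$ in its first argument (which holds per-sample, since each $f_i(\cdot,\xi_i)$ is convex by A.\ref{assump:convexity} and $\lambda_{ij}\ge 0$ with $g_{ij}$ jointly convex), while the dual gap $\mathcal{L}(\bx^{(t)},\blambda)-\mathcal{L}(\bx^{(t)},\blambda^{(t)})$ is controlled using concavity of $\mathcal{L}$ in $\blambda$ (a linear term minus the quadratic regularizer). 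Because $\bx^{(t)}$ is formed before the sample $\bxi^{(t)}_i$ is drawn (lines 6 and 9 of Algorithm~\ref{algo}), the stochastic gradients are unbiased, so every per-sample inequality survives the final $\bbE$ and turns $f$ into $F$ where needed.

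For the dual piece I would start from the ascent-with-projection step $\blambda^{(t+1)}=[\blambda^{(t)}+\eta\nabla_{\blambda}\mathcal{L}(\bx^{(t)},\blambda^{(t)})]^{+}$, invoke non-expansiveness of projection onto the nonnegative orthant against any fixed $\blambda\succeq\mathbf{0}$, expand $\verts{\blambda^{(t+1)}-\blambda}^2$, and rearrange to obtain $\langle\nabla_{\blambda}\mathcal{L},\blambda-\blambda^{(t)}\rangle\le\frac{1}{2\eta}(\verts{\blambda^{(t)}-\blambda}^2-\verts{\blambda^{(t+1)}-\blambda}^2)+\frac{\eta}{2}\verts{\nabla_{\blambda}\mathcal{L}}^2$. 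Concavity converts the left side into the dual gap; telescoping (using $\blambda^{(1)}=\mathbf{0}$) leaves $\frac{1}{2\eta}\verts{\blambda}^2$ plus $\frac{\eta}{2}\sum_t\verts{\nabla_{\blambda}\mathcal{L}}^2$, and Fact~\ref{fact:grad_lagrangian_bound}(a) bounds the latter by $\eta TC^2+\delta^2\eta^3\sum_t\bbE\verts{\blambda^{(t)}}^2$.

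The primal piece is where the only real subtlety lies: the descent step acts on the \emph{raw} iterate $\btx^{(t)}$, whereas the gradient is evaluated at the \emph{local} iterate $\bx^{(t)}=\projXn(\bhx^{(t)})$. I would therefore expand $\verts{\btx^{(t+1)}-\bx}^2$ via projection non-expansiveness (here $\bx\in\mathcal{X}^n$), rearrange to isolate $\langle\nabla_{\bx}\mathcal{L},\btx^{(t)}-\bx\rangle$, and split $\btx^{(t)}-\bx=(\bx^{(t)}-\bx)+(\btx^{(t)}-\bx^{(t)})$. The first part gives the convexity inequality $\langle\nabla_{\bx}\mathcal{L},\bx^{(t)}-\bx\rangle\ge\mathcal{L}(\bx^{(t)},\blambda^{(t)})-\mathcal{L}(\bx,\blambda^{(t)})$, producing the primal gap; the second (cross) term I would bound by Young's inequality as $\frac{\eta}{2}\verts{\nabla_{\bx}\mathcal{L}}^2+\frac{1}{2\eta}\verts{\btx^{(t)}-\bx^{(t)}}^2$, which is exactly the origin of the $\frac{1}{2\eta}\sum_t\bbE\verts{\bx^{(t)}-\btx^{(t)}}^2$ term in the statement. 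Telescoping (with $\verts{\btx^{(1)}-\bx}^2\le(2R)^2=4R^2$, since A.\ref{assump:X} stacks to $\verts{\bx}\le R$) and applying Fact~\ref{fact:grad_lagrangian_bound}(b) to $\sum_t\verts{\nabla_{\bx}\mathcal{L}}^2$ yields $\frac{2R^2}{\eta}+\eta T(1+m)G^2+\eta(1+m)\tilde{G}^2\sum_t\bbE\verts{\blambda^{(t)}}^2$.

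Adding the two bounds collects $\frac{1}{2\eta}(\verts{\blambda}^2+4R^2)$, the term $\eta T((1+m)G^2+C^2)$, the error term, and the coefficient $\eta((1+m)\tilde{G}^2+\delta^2\eta^2)$ on $\sum_t\bbE\verts{\blambda^{(t)}}^2$, matching the claim. The main obstacle is precisely the raw/local mismatch: without the perturbed-iterate split the convexity inequality cannot be applied, and the Young cross term is what forces the $\sum_t\bbE\verts{\bx^{(t)}-\btx^{(t)}}^2$ contribution, deliberately left unresolved here and bounded separately in Lemma~\ref{lem:e_t}. The only bookkeeping care is the gradient-norm scaling: on the primal side the $\eta^2\verts{\nabla_{\bx}\mathcal{L}}^2$ expansion term (contributing $\frac{\eta}{2}$ after division by $2\eta$) combines with the Young cross term (another $\frac{\eta}{2}$) to give a clean $\eta$, while on the dual side the lone $\frac{\eta}{2}$ combines with the factor $2$ inside Fact~\ref{fact:grad_lagrangian_bound}(a) to produce the same scaling.
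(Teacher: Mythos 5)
Your proposal is correct and follows essentially the same route as the paper's proof: the same projection non-expansiveness expansions for the primal and dual steps, the same convexity/concavity splitting, the same perturbed-iterate decomposition with Young's inequality producing the $\frac{1}{2\eta}\sum_{t}\bbE\verts{\bx^{(t)}-\btx^{(t)}}^2$ term, and the same telescoping with $\blambda^{(1)}=\mathbf{0}$, $\verts{\bx-\btx^{(1)}}^2\le 4R^2$, and Fact~\ref{fact:grad_lagrangian_bound}. Your bookkeeping of the gradient-norm coefficients (the $\frac{\eta}{2}+\frac{\eta}{2}$ on the primal side versus $\frac{\eta}{2}$ times the factor $2$ on the dual side) matches the paper's final assembly exactly.
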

\begin{proof}
	Using the gradient descent step of the raw variable $\btx$ from Algorithm~\ref{algo}, for any $\bx \in \mathcal{X}^n$, we have:
	\begin{align*}
		\Vert \bx - \btx^{(t+1)} \Vert^2 &= \verts{ \bx - \projXn \left( \btx^{(t)} - \eta \nabla_{\bx} \mathcal{L}(\bx^{(t)},\blambda^{(t)})  \right)  }^2 \\
		& \leq  \Vert \bx - \btx^{(t)} \Vert^2 + \eta^2 \Vert \nabla_{\bx} \mathcal{L}(\bx^{(t)},\blambda^{(t)}) \Vert^2   + 2 \eta \langle \bx - \btx^{(t)}, \nabla_{\bx} \mathcal{L}(\bx^{(t)},\blambda^{(t)}) \rangle 
	\end{align*}
	where the inequality is from Fact \ref{fact:proj_bound}. Rearranging terms:
	\begin{align} \label{eq:proof_1}
		\langle \btx^{(t)} {-} \bx , \nabla_{\bx} \mathcal{L}(\bx^{(t)},\blambda^{(t)}) \rangle \leq \frac{1}{2\eta} \left( \Vert \bx {-} \btx^{(t)} \Vert^2 {-} \Vert \bx {-} \btx^{(t+1)} \Vert^2 \right) + \frac{\eta}{2} \Vert \nabla_{\bx} \mathcal{L}(\bx^{(t)},\blambda^{(t)}) \Vert^2
	\end{align}
	Similarly, the gradient ascent updates for $\blambda \succeq \mathbf{0}$ gives:
	\begin{align*}
		\Vert \blambda - \blambda^{(t+1)} \Vert^2  &=   \verts{\blambda - \left[ \blambda^{(t)} + \eta \nabla_{\blambda} \mathcal{L}(\bx^{(t)},\blambda^{(t)}) \right]^{+}}^2 \\
		& \leq  \verts{\blambda - \blambda^{(t)}  }^2 + \eta^2 \verts{\nabla_{\blambda} \mathcal{L}(\bx^{(t)},\blambda^{(t)}}^2  - 2\eta \langle \blambda - \blambda^{(t)}, \nabla_{\blambda} \mathcal{L}(\bx^{(t)},\blambda^{(t)} \rangle 
	\end{align*}
	Rearranging terms we get:
	\begin{align} \label{eq:proof_2}
		\langle \blambda {-} \blambda^{(t)}, \nabla_{\blambda} \mathcal{L}(\bx^{(t)},\blambda^{(t)} \rangle & \leq \frac{1}{2\eta} \left( \verts{\blambda {-} \blambda^{(t)}  }^2 {-} \verts{\blambda {-} \blambda^{(t+1)}  }^2 \right)   + \frac{\eta}{2} \Vert \nabla_{\blambda} \mathcal{L}(\bx^{(t)},\blambda^{(t)}) \Vert^2
	\end{align}
	Since $\blambda \succeq \mathbf{0}$, $\mathcal{L}(\bx, \blambda^{(t)})$ is convex in $\bx$ and  $\mathcal{L}(\bx^{(t)}, \blambda)$ is concave in $\blambda$. We thus have:
	\begin{align} \label{eq:proof_3}
		\mathcal{L}(\bx^{(t)}, \blambda) & - \mathcal{L}(\bx, \blambda^{(t)})  \leq \langle \nabla_{\blambda} \mathcal{L}(\bx^{(t)},\blambda^{(t)})  , \blambda - \blambda^{(t)} \rangle  + \langle \nabla_{\bx} \mathcal{L}(\bx^{(t)},\blambda^{(t)})  , \bx^{(t)} - \bx \rangle \notag \\
		& = \langle \nabla_{\blambda} \mathcal{L}(\bx^{(t)},\blambda^{(t)})  , \blambda - \blambda^{(t)} \rangle + \langle \nabla_{\bx} \mathcal{L}(\bx^{(t)},\blambda^{(t)})  , \btx^{(t)} {-} \bx \rangle + \langle \nabla_{\bx} \mathcal{L}(\bx^{(t)},\blambda^{(t)})  , \bx^{(t)} - \btx^{(t)} \rangle \notag \\
		& \leq \langle \nabla_{\blambda} \mathcal{L}(\bx^{(t)},\blambda^{(t)})  , \blambda - \blambda^{(t)} \rangle + \langle \nabla_{\bx} \mathcal{L}(\bx^{(t)},\blambda^{(t)})  , \btx^{(t)} {-} \bx \rangle + \frac{\eta}{2} \verts{\nabla_{\bx} \mathcal{L}(\bx^{(t)},\blambda^{(t)})}^2  + \frac{1}{2\eta}   \verts{\bx^{(t)} - \btx^{(t)}}^2 
	\end{align}
	Substituting the bounds from \eqref{eq:proof_1}, \eqref{eq:proof_2} in \eqref{eq:proof_3}, summing from $t=1$ to $T$ and using $\blambda^{(1)} = \mathbf{0}$, we get:
	%
	%
	\begin{align*}
		\sum_{t=1}^{T} \left(\mathcal{L}(\bx^{(t)}, \blambda) - \mathcal{L}(\bx, \blambda^{(t)})\right)  & \leq \frac{1}{2 \eta} \left(  \verts{\blambda - \blambda^{(1)}  }^2 - \verts{\blambda - \blambda^{(T+1)}  }^2 \right) +  \frac{1}{2 \eta} \left( \verts{\bx {-} \btx^{(1)}}^2 {-} \verts{\bx {-} \btx^{(T+1)}}^2 \right) \\
		& \quad  + \frac{1}{2\eta} \sum_{t=1}^{T}    \verts{\bx^{(t)} {-} \btx^{(t)}}^2  + \frac{\eta}{2} \sum_{t=1}^{T} \left( \Vert \nabla_{\blambda} \mathcal{L}(\bx^{(t)},\blambda^{(t)}) \Vert^2 + 2 \Vert \nabla_{\bx} \mathcal{L}(\bx^{(t)},\blambda^{(t)}) \Vert^2  \right) 
	\end{align*}
	Taking the expectation, using bound for $\verts{\bx - \btx^{(1)}}_2^2$ from assumption~A.\ref{assump:X}  and using Fact \ref{fact:grad_lagrangian_bound} gives us:

	\begin{align} \label{eq:proof_6}
		\sum_{t=1}^{T} \mathbb{E}\left(\mathcal{L}(\bx^{(t)}, \blambda) - \mathcal{L}(\bx, \blambda^{(t)})\right)  & \leq \frac{1}{2 \eta} \left(  \verts{\blambda }^2 + 4R^2 \right)  + \eta T \left( (1+m)G^2 + C^2 \right) + \frac{1}{2\eta} \sum_{t=1}^{T}    \bbE \verts{ \btx^{(t)} - \bx^{(t)} }^2  \notag \\
		& \quad + \eta \left( (1+m)\tilde{G}^2 + \delta^2\eta^2 \right)  \sum_{t=1}^{T} \mathbb{E}[ \Vert\blambda^{(t)}\Vert^2  ]
	\end{align}
\end{proof}

\begin{fact*}[Restating Fact \ref{fact:grad_lagrangian_bound}]
	Consider the Lagrangian function over the primal and dual variables defined in \eqref{eq:lagrangian}. Then we have the following bounds:		
	\begin{enumerate} [(a)]
		\item $\bbE \Vert \nabla_{\blambda} \mathcal{L}_t(\bx^{(t)},\blambda^{(t)}) \Vert^2 \leq 2 C^2 + 2 \delta^2 \eta^2 \mathbb{E} \Vert \blambda^{(t)} \Vert^2$
		\item $\mathbb{E} \verts{\nabla_{\bx} \mathcal{L}_t(\bx^{(t)},\blambda^{(t)})}^2 \leq (1+m) \left( G^2  + 	\widetilde{G}^2 \mathbb{E} \verts{\blambda}^2  \right)$
	\end{enumerate}
where $C^2, \tilde{G} $  and $G $ are defined in Assumption \ref{assump:gradient}, \ref{assump:g}.
\end{fact*}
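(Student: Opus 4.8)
The plan is to prove the two bounds separately, in each case first writing the relevant gradient of $\mathcal{L}$ in closed form from \eqref{eq:lagrangian} and then applying an elementary norm inequality together with the boundedness assumptions.

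For part (a), I would differentiate \eqref{eq:lagrangian} in $\blambda$ to obtain $\nabla_{\blambda}\mathcal{L}(\bx^{(t)},\blambda^{(t)}) = \bg(\bx^{(t)}) - \delta\eta\,\blambda^{(t)}$. Applying the elementary inequality $\verts{\bu - \bv}^2 \le 2\verts{\bu}^2 + 2\verts{\bv}^2$ then gives $\verts{\nabla_{\blambda}\mathcal{L}}^2 \le 2\verts{\bg(\bx^{(t)})}^2 + 2\delta^2\eta^2\verts{\blambda^{(t)}}^2$. Since $\bg$ stacks the $m$ constraint values and each $g_{ij}$ is bounded by $C_{ij}$ in Assumption A.\ref{assump:g}, we have $\verts{\bg(\bx^{(t)})}^2 = \sum_{i}\sum_{j\in\mathcal{N}_i} g_{ij}(\bx_i^{(t)},\bx_j^{(t)})^2 \le \sum_i\sum_{j\in\mathcal{N}_i} C_{ij}^2 = C^2$ pointwise, hence in expectation, yielding the claimed bound $2C^2 + 2\delta^2\eta^2\,\bbE\verts{\blambda^{(t)}}^2$.

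For part (b), the key structural observation is that $\nabla_{\bx}\mathcal{L}(\bx^{(t)},\blambda^{(t)})$ decomposes as a sum of exactly $1+m$ vectors: the objective gradient $\nabla_{\bx} f(\bx^{(t)},\bxi^{(t)})$ together with one term $\lambda_{ij}^{(t)}\nabla_{\bx} g_{ij}(\bx_i^{(t)},\bx_j^{(t)})$ for each of the $m$ directed edges, where $\nabla_{\bx} g_{ij}$ is the gradient over the full stacked variable, supported only on the blocks indexed by $i$ and $j$. I would then apply the power-mean inequality $\verts{\sum_{k=1}^{N}\bv_k}^2 \le N\sum_{k=1}^{N}\verts{\bv_k}^2$ with $N = 1+m$. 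This splits the bound into $(1+m)$ times the sum of $\bbE\verts{\nabla_{\bx} f}^2$ and $\sum_{(i,j)} \bbE[(\lambda_{ij}^{(t)})^2]\,\verts{\nabla_{\bx} g_{ij}}^2$. Conditioning on the past and invoking \eqref{assump:gradient_func} together with $G^2 = \sum_i G_i^2$ bounds the first piece by $G^2$; using \eqref{assump:gradient_constraint} bounds each constraint-gradient norm by $G_{ij}^2 \le \widetilde{G}^2$, so the second piece is at most $\widetilde{G}^2 \sum_{(i,j)}\bbE[(\lambda_{ij}^{(t)})^2] = \widetilde{G}^2\,\bbE\verts{\blambda^{(t)}}^2$. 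Combining these gives the stated bound.

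I expect the only delicate point to be the bookkeeping behind the factor $(1+m)$: one must correctly identify that there are precisely $m$ constraint terms (one per directed edge) in the stacked primal gradient and verify that each associated constraint gradient has squared norm controlled by $\widetilde{G}^2$. The norm inequalities themselves and the substitutions from the boundedness assumptions are routine.
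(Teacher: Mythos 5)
Your proposal is correct and follows essentially the same route as the paper's own proof: part (a) via the closed form $\nabla_{\blambda}\mathcal{L} = \bg(\bx^{(t)}) - \delta\eta\,\blambda^{(t)}$, the inequality $\verts{\bu - \bv}^2 \le 2\verts{\bu}^2 + 2\verts{\bv}^2$, and the boundedness of the $g_{ij}$; part (b) via the $(1+m)$-term decomposition of the stacked primal gradient, the power-mean/Cauchy--Schwarz bound, the law of total expectation for the $\nabla_{\bx} f$ term, and $G_{ij} \le \widetilde{G}$ from \eqref{assump:gradient_constraint}. Your bookkeeping---exactly $m$ constraint terms indexed by directed pairs $i\in[n]$, $j\in\mathcal{N}_i$, each stacked gradient supported on blocks $i,j$ with norm at most $G_{ij}$---coincides with the paper's accounting, so there is no gap.
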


\begin{proof}
	From the definition of the Lagrangian, we note that:
	\begin{align} \label{eq:proof_13}
		\mathbb{E} \Vert \nabla_{\blambda} \mathcal{L}_t(\bx^{(t)},\blambda^{(t)}) \Vert^2 = \mathbb{E} \Vert  \bg(\bx^{(t)}) - \delta \eta \blambda^{(t)} \Vert^2 \leq 2 \mathbb{E} \Vert \bg(\bx^{(t)}) \Vert^2 + 2 \delta^2 \eta^2 \Vert \blambda^{(t)} \Vert^2 \leq 2 C^2 + 2 \delta^2 \eta^2 \mathbb{E} \Vert \blambda^{(t)} \Vert^2
	\end{align}
	where to get to \eqref{eq:proof_13}, we use $\verts{\bg(\bx)}^2  = \sum_{i=1}^{n} \sum_{j \in \mathcal{N}_i} (g_{ij} (\bx_i, \bx_j))^2 \leq  \sum_{i=1}^{n} \sum_{j \in \mathcal{N}_i} C_{ij}^2$ using Assumption \ref{assump:g}. This proves part (a). For proving part (b), note that from the definition of Lagrangian, we also have:
	\begin{align} \label{eq:proof_12}
		\mathbb{E} \verts{\nabla_{\bx} \mathcal{L}_t(\bx^{(t)},\blambda^{(t)})}^2 & = \mathbb{E} \verts{\nabla_{\bx} f(\bx^{(t)},\bxi^{(t)}) + \sum_{i=1}^{n} \sum_{j \in \mathcal{N}_i} \lambda_{ij}^{(t)} \nabla_{\bx} g_{ij}(\bx_i^{(t)},\bx_j^{(t)})}^2 \notag \\
		& \leq  \mathbb{E} \Big( \verts{\nabla_{\bx} f(\bx^{(t)},\bxi^{(t)})}  {+} \sum_{i=1}^{n} \sum_{j \in \mathcal{N}_i} \lambda_{ij}^{(t)} \verts{\nabla_{\bx} g_{ij}(\bx_i^{(t)},\bx_j^{(t)})}  \Big)^2 \notag \\
		& \leq (1+m) \mathbb{E} \verts{\nabla_{\bx} f(\bx^{(t)},\bxi^{(t)})}^2  + (1+m) \sum_{i=1}^{n} \sum_{j \in \mathcal{N}_i} (\lambda_{ij}^{(t)})^2 \mathbb{E}  \verts{\nabla_{\bx} g_{ij}(\bx_i^{(t)},\bx_j^{(t)})}^2  
	\end{align}
	The second term in R.H.S. of \eqref{eq:proof_12} can be bounded using the bound on $\Vert \nabla_{\bx} g_{ij}(\bx_i^{(t)},\bx_j^{(t)}) \Vert^2$  from \eqref{assump:gradient_constraint} in Assumption \ref{assump:gradient}. To bound the first term, by the law of total expectation we have:
	\begin{align*}
		\mathbb{E} \left[\verts{\nabla_{\bx} f(\bx^{(t)},\bxi^{(t)}) }^2\right] = \mathbb{E}_{\bx^{(t)}} \left[ \mathbb{E}_{\bxi^{(t)}} \left[ \verts{\nabla_{\bx} f(\bx^{(t)},\bxi^{(t)})}^2 | \bx^{(t)}   \right]   \right] \leq \mathbb{E}_{\bx^{(t)}} \left[ G^2 \right]
	\end{align*}
	where the inequality follows from \eqref{assump:gradient_func} in Assumption \ref{assump:gradient} and defining $G^2 = \sqrt{\sum_{i=1}^{n} G_i^2 }$. Substituting these bounds in \eqref{eq:proof_12} gives:
	\begin{align} \label{eq:proof_14}
		\mathbb{E} \verts{\nabla_{\bx} \mathcal{L}_t(\bx^{(t)},\blambda^{(t)})}^2 \leq (1+m) \left( G^2  + 	\tilde{G}^2 \mathbb{E} \verts{\blambda}^2  \right)
	\end{align} 
\end{proof}

\begin{fact} [Restating Fact \ref{fact:F_diff_bound}]
	For all $\bx \in \mathcal{X}^n,$ we have:
	\begin{align*}
		\mathbb{E}[F(\bx)]-F(\bx^{*}) > -4GR
	\end{align*}
where $R,G$ are defined in Assumptions \ref{assump:X} and \ref{assump:g}, respectively.
\end{fact}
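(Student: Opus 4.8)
The plan is to show that $F$ is $G$-Lipschitz on the bounded convex set $\mathcal{X}^n$ and then control $F(\bx)-F(\bx^{*})$ by the product of the Lipschitz constant and the diameter of $\mathcal{X}^n$; the advertised bound $-4GR$ then follows with room to spare.

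First I would obtain a uniform bound on $\nabla F$. Because $F_i(\bx_i)=\mathbb{E}_{\bxi_i}[f_i(\bx_i,\bxi_i)]$, interchanging gradient and expectation gives $\nabla_{\bx_i}F_i(\bx_i)=\mathbb{E}_{\bxi_i}[\nabla_{\bx_i}f_i(\bx_i,\bxi_i)]$, so Jensen's inequality (convexity of $\|\cdot\|_2^2$) together with \eqref{assump:gradient_func} yields $\|\nabla_{\bx_i}F_i(\bx_i)\|_2^2\le \mathbb{E}_{\bxi_i}[\|\nabla_{\bx_i}f_i(\bx_i,\bxi_i)\|_2^2]\le G_i^2$. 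Stacking over the nodes, $\|\nabla F(\bx)\|_2^2=\sum_{i=1}^n\|\nabla_{\bx_i}F_i(\bx_i)\|_2^2\le\sum_{i=1}^n G_i^2=G^2$ for every $\bx\in\mathcal{X}^n$. Since $\mathcal{X}$, and hence $\mathcal{X}^n$, is convex (A.\ref{assump:X}), this makes $F$ a $G$-Lipschitz function on $\mathcal{X}^n$.

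Next I would bound the diameter of $\mathcal{X}^n$. By A.\ref{assump:X}, each block obeys $\|\bx_i\|_2\le R/\sqrt{n}$, so any $\bx\in\mathcal{X}^n$ satisfies $\|\bx\|_2^2=\sum_{i=1}^n\|\bx_i\|_2^2\le n\cdot R^2/n=R^2$, i.e.\ $\|\bx\|_2\le R$. Applying this to both $\bx$ and the optimizer $\bx^{*}\in\mathcal{X}^n$ and using the triangle inequality gives $\|\bx-\bx^{*}\|_2\le 2R$.

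Combining the two ingredients through the Lipschitz estimate, $|F(\bx)-F(\bx^{*})|\le G\|\bx-\bx^{*}\|_2\le 2GR$, hence $F(\bx)-F(\bx^{*})\ge -2GR$ for every $\bx\in\mathcal{X}^n$; taking expectations over any (possibly random) $\bx$ supported on $\mathcal{X}^n$ preserves the inequality, so $\mathbb{E}[F(\bx)]-F(\bx^{*})\ge -2GR> -4GR$, where the final strict inequality uses $G,R>0$ (guaranteed by A.\ref{assump:X} and A.\ref{assump:gradient}). The only nonroutine step is the Jensen passage from the mean-square gradient bound on $f_i$ to a deterministic bound on $\nabla F_i$; everything else is a standard Lipschitz-times-diameter estimate, and the gap between the derived $2GR$ and the stated $4GR$ shows the claim holds comfortably.
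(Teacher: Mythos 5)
Your proposal is correct, and it follows the same overall strategy as the paper---a Lipschitz-constant-times-diameter estimate over the bounded set $\mathcal{X}^n$---but the two arguments establish Lipschitzness of $F$ by different means, and the difference is worth noting. The paper never differentiates $F$ itself: it applies the first-order convexity inequality to the \emph{sampled} function $f(\cdot,\bxi)$ at the two points $\bx,\btx$, obtains $|f(\bx,\bxi)-f(\btx,\bxi)|\leq\left(\verts{\nabla_{\bx}f(\bx,\bxi)}+\verts{\nabla_{\bx}f(\btx,\bxi)}\right)\verts{\btx-\bx}$, and only then takes expectations, using Jensen in the form $\mathbb{E}\verts{\nabla_{\bx}f}\leq\sqrt{\mathbb{E}\verts{\nabla_{\bx}f}^2}\leq G$; this yields $|F(\bx)-F(\btx)|\leq 2G\verts{\btx-\bx}\leq 4GR$ with no interchange of gradient and expectation. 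You instead differentiate under the expectation, $\nabla_{\bx_i}F_i(\bx_i)=\mathbb{E}_{\bxi_i}[\nabla_{\bx_i}f_i(\bx_i,\bxi_i)]$, and apply Jensen to the squared norm, obtaining the sharper Lipschitz constant $G$ and the bound $2GR$. That interchange is the one step in your argument that is not free; it is justifiable here, either by dominated convergence (A.\ref{assump:gradient} gives $\mathbb{E}\verts{\nabla_{\bx_i}f_i}\leq G_i$, and convexity controls the difference quotients), or more robustly by observing that convexity of $f_i(\cdot,\bxi_i)$ (A.\ref{assump:convexity}) makes $\mathbb{E}_{\bxi_i}[\nabla_{\bx_i}f_i(\bx_i,\bxi_i)]$ a subgradient of $F_i$ at $\bx_i$, which is all your Lipschitz estimate needs even if $F_i$ fails to be differentiable---the paper's pointwise-then-average route is exactly the way to sidestep this technicality, at the cost of a factor of $2$. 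A small bonus of your version: the slack $2GR<4GR$ makes the strict inequality in the statement immediate (since $G,R>0$), whereas the paper's own chain, read literally, only delivers the non-strict bound $\mathbb{E}[F(\bx)]-F(\bx^{*})\geq-4GR$.
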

\begin{proof}
	As $f(\bx,\bxi)$ is convex in $\bx$, $\forall \bx ,\btx \in \mathcal{X}^n$ and $\bxi \in \Xi$, we have:
		\begin{align*}
			f(\bx,\bxi) \geq f(\btx, \bxi) + \langle \nabla_{\bx} f(\btx,\bxi), \bx - \btx \rangle \\
			f(\btx,\bxi) \geq f(\bx, \bxi) + \langle \nabla_{\bx} f(\bx,\bxi), \btx - \bx \rangle
		\end{align*}
		Rearranging the terms in the above equations, we conclude:
		\begin{align*}
			\vert  f(\bx,\bxi) - f(\btx, \bxi) \vert &\leq \max \{ \langle \nabla_{\bx} f(\btx,\bxi), \btx - \bx \rangle  ,  \langle \nabla_{\bx} f(\bx,\bxi), \bx - \btx \rangle \} \\
			&\leq \left( \Vert  \nabla_{\bx} f(\bx,\bxi) \Vert + \Vert \nabla_{\bx} f(\btx,\bxi) \Vert  \right) \Vert \btx - \bx \Vert
		\end{align*}
		Taking expectation w.r.t. $\bxi$, noting that $F(\bx) = \mathbb{E}_{\bxi}[ f(\bx,\bxi)]$, and using Jensen's inequality, we get:
		\begin{align} \label{eq:proof_15}
			|F(\bx) - F(\btx)|  & \leq \Vert \btx - \bx \Vert \left( \sqrt{\mathbb{E} [ \Vert \nabla_{\bx} f(\bx,\bxi) \Vert^2 ] } + \sqrt{\mathbb{E} [ \Vert \nabla_{\bx} f(\btx,\bxi) \Vert^2 ] } \right) \notag \\
			& \leq 2G \Vert \btx - \bx \Vert
		\end{align}
		where for the last inequality we have used Assumption \ref{assump:gradient}. We finally note that $\Vert \btx - \bx \Vert \leq \Vert \btx \Vert + \Vert \bx \Vert \leq 2 R $ from Assumption \ref{assump:X} and $F(\bx^{*}) \leq F(\btx)$ for all $\btx \in \mathcal{X}^n$ (as $\bx^{*}$ is the minimizer of $F$) , which completes the proof.
\end{proof}

\begin{lemma*} [Restating Lemma \ref{lem:e_t}]
	For the update steps in Algorithm \ref{algo}, the norm of expected error $\mathbb{E}\Vert \be^{(t)} \Vert$ for any $t \in [T]$ is bounded as:
	\begin{align*}
		\mathbb{E}\Vert \be^{(t)} \Vert^2 \leq \frac{2 \eta^2}{\omega} \sum_{k=0}^{t-2} \left(  1 {-} \frac{\omega}{2} \right)^k \mathbb{E}\Vert\nabla_{\bx}\mathcal{L}_{t{-}1{-}k}(\bx^{(t{-}1{-}k)},\blambda^{(t{-}1{-}k)} ) \Vert^2
	\end{align*}
	where $\eta$ is the learning rate and $\omega$ is the compression coefficient.
\end{lemma*}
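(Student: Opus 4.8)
The plan is to bound $\be^{(t)}=\btx^{(t)}-\bx^{(t)}$ by routing through the unprojected copy parameter $\bhx^{(t)}$ and extracting a one-step geometric contraction governed by $\omega$. Since $\bx^{(t)}=\projXn(\bhx^{(t)})$ while $\btx^{(t)}\in\mathcal{X}^n$, Fact~\ref{fact:proj_bound} (non-expansiveness of the projection) gives at once
\[
\verts{\be^{(t)}}=\verts{\btx^{(t)}-\projXn(\bhx^{(t)})}\le \verts{\btx^{(t)}-\bhx^{(t)}},
\]
so it suffices to control the residual $\br^{(t)}:=\btx^{(t)}-\bhx^{(t)}$, which is exactly the accumulated error-feedback term.

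First I would set up a recursion for $\br^{(t)}$. From the copy update $\bhx^{(t)}=\bhx^{(t-1)}+\C(\btx^{(t)}-\bhx^{(t-1)})$ we have $\br^{(t)}=(\btx^{(t)}-\bhx^{(t-1)})-\C(\btx^{(t)}-\bhx^{(t-1)})$, so applying the compression property \eqref{eq:comp_op} conditionally on the history (and then taking total expectation) yields $\bbE\verts{\br^{(t)}}^2\le (1-\omega)\,\bbE\verts{\btx^{(t)}-\bhx^{(t-1)}}^2$. Next I would split $\btx^{(t)}-\bhx^{(t-1)}=(\btx^{(t)}-\btx^{(t-1)})+\br^{(t-1)}$, and, since the raw update is $\btx^{(t)}=\projXn(\btx^{(t-1)}-\eta\nabla_{\bx}\mathcal{L}_{t-1})$ with $\btx^{(t-1)}\in\mathcal{X}^n$, use Fact~\ref{fact:proj_bound} a second time to bound the drift $\verts{\btx^{(t)}-\btx^{(t-1)}}\le\eta\verts{\nabla_{\bx}\mathcal{L}_{t-1}}$.

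I would then apply Young's inequality $\verts{\bu+\bv}^2\le(1+\beta)\verts{\bu}^2+(1+\beta^{-1})\verts{\bv}^2$ with the specific choice $\beta=\omega/2$ to obtain
\[
\bbE\verts{\br^{(t)}}^2\le (1-\omega)\big(1+\tfrac{\omega}{2}\big)\,\bbE\verts{\br^{(t-1)}}^2+(1-\omega)\big(1+\tfrac{2}{\omega}\big)\,\eta^2\,\bbE\verts{\nabla_{\bx}\mathcal{L}_{t-1}}^2.
\]
The point of $\beta=\omega/2$ is that $(1-\omega)(1+\tfrac{\omega}{2})\le 1-\tfrac{\omega}{2}$ and, crucially, $(1-\omega)(1+\tfrac{2}{\omega})=\tfrac{2}{\omega}-1-\omega\le\tfrac{2}{\omega}$, which produces exactly the advertised contraction factor $1-\tfrac{\omega}{2}$ and prefactor $\tfrac{2\eta^2}{\omega}$. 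Finally I would unroll this linear recursion down to the base case: the first-round uncompressed exchange forces $\btx^{(1)}=\bhx^{(1)}$, i.e. $\br^{(1)}=\mathbf{0}$, so the homogeneous term dies and, after re-indexing $k=t-1-s$, the sum collapses to $\frac{2\eta^2}{\omega}\sum_{k=0}^{t-2}(1-\tfrac{\omega}{2})^k\,\bbE\verts{\nabla_{\bx}\mathcal{L}_{t-1-k}}^2$; combined with $\verts{\be^{(t)}}\le\verts{\br^{(t)}}$ this is the claim.

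The main obstacle is the bookkeeping around the two projections and the randomness: one must verify that both $\verts{\be^{(t)}}\le\verts{\br^{(t)}}$ and the drift bound genuinely follow from Fact~\ref{fact:proj_bound} (using that $\btx^{(t)}$ and $\btx^{(t-1)}$ lie in $\mathcal{X}^n$), and that \eqref{eq:comp_op} is applied legitimately by conditioning so that $\btx^{(t)}-\bhx^{(t-1)}$ is fixed before $\C$ acts. The only genuinely quantitative step is the constant-tuning in Young's inequality, where $\beta=\omega/2$ must simultaneously deliver the factor $1-\tfrac{\omega}{2}$ and keep the gradient prefactor at $\tfrac{2\eta^2}{\omega}$ rather than a looser value.
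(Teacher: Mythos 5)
Your proposal is correct and follows essentially the same route as the paper's own proof: the same reduction $\verts{\be^{(t)}}\le\verts{\btx^{(t)}-\bhx^{(t)}}$ via Fact~\ref{fact:proj_bound}, the same application of the compression property to the copy-update residual, the same drift bound from the non-expansiveness of $\projXn$, and the identical Young's-inequality tuning $\alpha=\omega/2$ with base case $\btx^{(1)}=\bhx^{(1)}$ before unrolling. Your explicit conditioning on the history before invoking \eqref{eq:comp_op} is a point the paper leaves implicit, but otherwise the two arguments coincide step for step.
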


\begin{proof}
	
	From the definition of $\be^{(t)}$, we have:
	\begin{align} \label{eq:proof_lem_1}
		\verts{\be^{(t)}} =  \verts{\btx^{(t)} - \bx^{(t)} } =  \verts{\btx^{(t)} - \projXn \left(\bhx^{(t)}\right) } \leq  \verts{\btx^{(t)} -  \bhx^{(t)} }
	\end{align}
	where the last inequality follows from Fact \ref{fact:proj_bound}.
	Define $S^{(t)} := \mathbb{E} \verts{\btx^{(t)} -  \bhx^{(t)} }^2 $. From \eqref{eq:proof_lem_1}, we have that $\mathbb{E} \Vert \be^{(t)}\Vert^2 \leq S^{(t)}$. In the following, we will derive a bound for $S^{(t)}$.  Using the update step for $\bhx^{(t)}$, we can write:
	\begin{align*}
		S^{(t)}  & = \mathbb{E} \verts{\btx^{(t)} -  \bhx^{(t-1)}  - \mathcal{C}(\btx^{(t)} -  \bhx^{(t-1)}) }^2 \\
		& \stackrel{(a)}{\leq} (1-\omega) \mathbb{E} \verts{ \btx^{(t)} -  \bhx^{(t-1)} }^2 \\
		&= (1{-}\omega) \mathbb{E} \Vert   \projXn \left( \btx^{(t-1)} - \eta \nabla_{\bx}\mathcal{L}_{t-1}(\bx^{(t-1)},\blambda^{(t-1)} )  \right)  -  \bhx^{(t-1)} \Vert^2 \\
		&= (1{-}\omega) \mathbb{E} \Vert \btx^{(t-1)}   - \bhx^{(t-1)} - \btx^{(t-1)} + \projXn \left( \btx^{(t-1)} - \eta \nabla_{\bx}\mathcal{L}_{t-1}(\bx^{(t-1)},\blambda^{(t-1)} )  \right)   \Vert^2
	\end{align*}
	where (a) follows from the definition of compression operator (Definition \ref{def:comp}). For any $\alpha>0$, we have:\footnote{For any positive $\alpha$ and any vectors $\mathbf{a},\mathbf{b}$, we have that:
		\begin{align*}
			\verts{\mathbf{a}+\mathbf{b}}^2 \leq \left( 1+ \alpha \right) \verts{\mathbf{a}}^2 + \left(1+ \frac{1}{\alpha}\right) \verts{\mathbf{b}}^2
	\end{align*}}
	\begin{align*}
		 S^{(t)}   \leq (1-\omega)(1+\alpha)\mathbb{E}\verts{\btx^{(t-1)}   - \bhx^{(t-1)} }^2 + (1-\omega)\left(1+ \frac{1}{\alpha} \right)  \left[ \mathbb{E} \Big\Vert \projXn \left( \btx^{(t-1)} {-} \eta \nabla_{\bx}\mathcal{L}_{t-1}(\bx^{(t-1)},\blambda^{(t-1)} )  \right)   - \btx^{(t-1)}  \Big\Vert^2 \right]
	\end{align*}
	Noting that $\btx^{(t-1)} \in \mathcal{X}^n$ and using Fact \ref{fact:proj_bound} to bound the second term in R.H.S. of above and using $\alpha = \frac{\omega}{2}$, we get:
	\begin{align*}
		S^{(t)} & \leq \left( 1 - \frac{\omega}{2} \right)  \mathbb{E}\verts{\btx^{(t-1)}   - \bhx^{(t-1)} }^2 + \frac{2 \eta^2}{\omega} \mathbb{E}\verts{\nabla_{\bx}\mathcal{L}_{t-1}(\bx^{(t-1)},\blambda^{(t-1)} )}^2 \\
		& = \left( 1 - \frac{\omega}{2} \right)  S^{(t-1)} + \frac{2 \eta^2}{\omega} \mathbb{E}\verts{\nabla_{\bx}\mathcal{L}_{t-1}(\bx^{(t-1)},\blambda^{(t-1)} )}^2
	\end{align*} 
	Expanding the recurrent form for $S^{(t)}$ and noting that $S^{(1)}=0$, we get:
	\begin{align*}
		S^{(t)} \leq \frac{2 \eta^2}{\omega} \sum_{k=0}^{t-2} \left(  1 {-} \frac{\omega}{2} \right)^k \mathbb{E}\verts{\nabla_{\bx}\mathcal{L}_{t-1-k}(\bx^{(t-1-k)},\blambda^{(t-1-k)} )}^2
	\end{align*}
	Using the relation $\mathbb{E}\Vert \be^{(t)} \Vert^2_2 \leq S^{(t)}$, for all $t \in [T]$ we have:
	\begin{align*}
		\mathbb{E}\Vert \be^{(t)}\Vert^2 \leq \frac{2 \eta^2}{\omega} \sum_{k=0}^{t-2} \left(  1 - \frac{\omega}{2} \right)^k \mathbb{E}\verts{\nabla_{\bx}\mathcal{L}_{t-1-k}(\bx^{(t-1-k)},\blambda^{(t-1-k)} )}^2
	\end{align*}
	This concludes the proof of Lemma \ref{lem:e_t}.
	
\end{proof}

\section{Omitted Technical Details from Section \ref{sec:bandit_feedback}} \label{sec:app_bandit}

\begin{lemma*} [Restating Lemma~\ref{lem:proof_bandit_init}]
	Consider the update steps in Algorithm~\ref{algob} with learning rate $\eta$. Under assumptions~A.\ref{assump:X}-A.\ref{assump:g}, for any $\bx \in \widetilde{\mathcal{X}}^{n}$ and $\blambda \in \mathbb{R}^m$ with $\blambda \succeq \mathbf{0}$, the summation of the function $\calH$ (defined in \eqref{eq:bandit_vector_H}) satisfies:
	\begin{align*}
		\sum_{t=1}^{T} \mathbb{E}	\left[\calH(\bx^{(t)},\blambda) - \calH( \bx,\blambda^{(t)})\right] & \leq 
		\frac{\eta}{2} \sum_{t=1}^{T} \mathbb{E} \left( 2\Vert \bp^{(t)} \Vert^2  + \Vert \nabla_{\blambda}\widetilde{\mathcal{L}}_t(\bx^{(t)},\blambda^{(t)}) \Vert^2  \right)  \notag \\
		& \quad  + \frac{1}{2\eta} \sum_{t=1}^{T} \verts{\bx^{(t)} {-} \btx^{(t)} }^2  + \frac{1}{2 \eta} \sum_{t=1}^{T} \left(  \verts{\blambda }^2 + 4R^2 \right) 
	\end{align*}	
\end{lemma*}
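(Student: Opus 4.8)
The plan is to mirror the proof of Lemma~\ref{lem:proof_sample_init}, but with the true primal gradient replaced by the estimate $\bp^{(t)}$ and the Lagrangian replaced by the surrogate $\calH$. I will use throughout the two properties recorded just after \eqref{eq:bandit_vector_H}: that $\calH(\cdot,\blambda^{(t)})$ is convex and $\calH(\bx^{(t)},\cdot)$ is concave, and that the gradients satisfy $\nabla_{\bx}\calH(\bx^{(t)},\blambda^{(t)}) = \bp^{(t)}$ while $\nabla_{\blambda}\calH(\bx^{(t)},\blambda^{(t)}) = \nabla_{\blambda}\widetilde{\mathcal{L}}_t(\bx^{(t)},\blambda^{(t)})$. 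These let me run the standard saddle-point argument verbatim, with $\calH$ playing the role of $\mathcal{L}$.

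First I would record the two one-step estimates. For the primal variable, start from the update $\btx^{(t+1)} = \projXt(\btx^{(t)} - \eta\bp^{(t)})$ and apply nonexpansiveness of the projection (Fact~\ref{fact:proj_bound}) against any $\bx \in \widetilde{\mathcal{X}}^n$; expanding the square and rearranging gives
\[
\langle \btx^{(t)} - \bx,\, \bp^{(t)} \rangle \leq \frac{1}{2\eta}\left( \|\bx - \btx^{(t)}\|^2 - \|\bx - \btx^{(t+1)}\|^2 \right) + \frac{\eta}{2}\|\bp^{(t)}\|^2 .
\]
An identical manipulation on the dual update $\blambda^{(t+1)} = [\blambda^{(t)} + \eta\nabla_{\blambda}\widetilde{\mathcal{L}}_t]^{+}$, using that projection onto the nonnegative orthant is nonexpansive, yields
\[
\langle \blambda - \blambda^{(t)},\, \nabla_{\blambda}\widetilde{\mathcal{L}}_t \rangle \leq \frac{1}{2\eta}\left( \|\blambda - \blambda^{(t)}\|^2 - \|\blambda - \blambda^{(t+1)}\|^2 \right) + \frac{\eta}{2}\|\nabla_{\blambda}\widetilde{\mathcal{L}}_t\|^2 .
\]

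Next I would invoke the convexity-concavity of $\calH$ to bound the per-step gap by its linearization, and then substitute the gradient identities above, giving
\[
\calH(\bx^{(t)},\blambda) - \calH(\bx,\blambda^{(t)}) \leq \langle \nabla_{\blambda}\widetilde{\mathcal{L}}_t,\, \blambda - \blambda^{(t)} \rangle + \langle \bp^{(t)},\, \bx^{(t)} - \bx \rangle .
\]
The crucial step is to split $\bx^{(t)} - \bx = (\btx^{(t)} - \bx) + (\bx^{(t)} - \btx^{(t)})$: the first piece is exactly the left-hand side of the primal estimate, while the error piece I would control by Young's inequality, $\langle \bp^{(t)}, \bx^{(t)} - \btx^{(t)} \rangle \leq \tfrac{\eta}{2}\|\bp^{(t)}\|^2 + \tfrac{1}{2\eta}\|\bx^{(t)} - \btx^{(t)}\|^2$. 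The two $\tfrac{\eta}{2}\|\bp^{(t)}\|^2$ contributions then combine into the coefficient $2\|\bp^{(t)}\|^2$ appearing in the statement.

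Finally I would plug the two one-step estimates into this bound, sum over $t=1,\dots,T$ so that the $\|\bx - \btx^{(t)}\|^2$ and $\|\blambda - \blambda^{(t)}\|^2$ differences telescope, discard the nonnegative terminal terms, and use $\blambda^{(1)} = \mathbf{0}$ together with the diameter bound $\|\bx - \btx^{(1)}\| \leq 2R$ from Assumption~A.\ref{assump:X} (valid since $\widetilde{\mathcal{X}} \subset \mathcal{X}$) to replace the boundary terms by $\tfrac{1}{2\eta}(\|\blambda\|^2 + 4R^2)$, which is in turn dominated by the summed expression in the statement; taking expectations yields the claim. I do not expect a genuine obstacle, as the skeleton is identical to Lemma~\ref{lem:proof_sample_init}; the only point demanding care is the bookkeeping that feeds $\nabla_{\bx}\calH = \bp^{(t)}$ into both the primal descent step and the cross term. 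The conceptually important remark is that, unlike the sample-feedback case, $\bp^{(t)}$ is merely an \emph{unbiased estimate} of $\nabla_{\bx}\widetilde{\mathcal{L}}_t$, which is exactly why the bound is stated for $\calH$ rather than $\widetilde{\mathcal{L}}$, with the passage back to $\widetilde{\mathcal{L}}$ postponed to Proposition~\ref{prop:interim_2}.
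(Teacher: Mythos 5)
Your proposal is correct and follows essentially the same route as the paper's own proof: the same two one-step projection estimates for the primal update $\btx^{(t+1)} = \projXt(\btx^{(t)} - \eta\bp^{(t)})$ and the dual ascent step, the same convexity--concavity linearization of $\calH$ combined with the gradient identities $\nabla_{\bx}\calH(\bx^{(t)},\blambda^{(t)}) = \bp^{(t)}$ and $\nabla_{\blambda}\calH(\bx^{(t)},\blambda^{(t)}) = \nabla_{\blambda}\widetilde{\mathcal{L}}_t(\bx^{(t)},\blambda^{(t)})$, the same splitting of the cross term $\langle \bp^{(t)}, \bx^{(t)} - \btx^{(t)}\rangle$ by Young's inequality (producing the coefficient $2\Vert\bp^{(t)}\Vert^2$), and the same telescoping with $\blambda^{(1)} = \mathbf{0}$ and the $4R^2$ diameter bound from Assumption~A.\ref{assump:X}. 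Your observation that the single boundary term $\frac{1}{2\eta}\left(\Vert\blambda\Vert^2 + 4R^2\right)$ is dominated by the summed expression in the statement also correctly accounts for the (slightly loose) form in which the lemma is stated.
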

\begin{proof}
	The initial steps for the proof are similar to those of the proof of Lemma~\ref{lem:proof_sample_init} till \eqref{eq:proof_2}, with the differences being usage of the Lagrangian $\widetilde{	\mathcal{L}}$ and projecting instead onto the convex set $\widetilde{\mathcal{X}}$. For any $\bx \in \widetilde{\mathcal{X}}^n$ and $\blambda \in \mathbb{R}^m$ with $\blambda \succeq \mathbf{0}$, the updates of Algorithm~\ref{algob} satisfy:
	\begin{align} \label{eq:proofb_1}
		\langle \btx^{(t)} - \bx , \bp^{(t)} \rangle & \leq \frac{1}{2\eta} \left( \Vert \bx - \btx^{(t)} \Vert^2 - \Vert \bx - \btx^{(t+1)} \Vert^2 \right) + \frac{\eta}{2} \Vert \bp^{(t)} \Vert^2
	\end{align}
	
	\begin{align} \label{eq:proofb_2}
		\langle \blambda {-} \blambda^{(t)}, \nabla_{\blambda}\widetilde{\mathcal{L}}_t(\bx^{(t)},\blambda^{(t)} \rangle & \leq \frac{1}{2\eta} ( \verts{\blambda {-} \blambda^{(t)}  }^2 {-} \verts{\blambda {-} \blambda^{(t+1)}  }^2 ) + \frac{\eta}{2} \Vert \nabla_{\blambda}\widetilde{\mathcal{L}}_t(\bx^{(t)},\blambda^{(t)}) \Vert^2
	\end{align}
	Now, from the convexity of $\calH(\bx,\blambda^{(t)}) $ in $\bx$, we have:
	\begin{align*}
		\calH(\bx,\blambda^{(t)}) \geq \calH(\bx^{(t)},\blambda^{(t)}) +  \langle \nabla_{\bx} \calH(\bx^{(t)},\blambda^{(t)}), \bx - \bx^{(t)}    \rangle
	\end{align*}
	Similarly, from the concavity of $\calH(\bx^{(t)},\blambda)$ in $\blambda$:
	\begin{align*}
		\calH(\bx^{(t)},\blambda) \leq \calH(\bx^{(t)},\blambda^{(t)}) +  \langle \nabla_{\blambda} \calH(\bx^{(t)},\blambda^{(t)}), \blambda - \blambda^{(t)}    \rangle
	\end{align*}
	The above two equations yield:
	\begin{align*}
		\calH(\bx^{(t)},\blambda) - \calH(\bx,\blambda^{(t)}) 
		\leq \langle \nabla_{\blambda} \calH(\bx^{(t)},\blambda^{(t)}), \blambda - \blambda^{(t)}    \rangle  + \langle \nabla_{\bx} \calH(\bx^{(t)},\blambda^{(t)}), \btx^{(t)} {-} \bx    \rangle {+} \langle \nabla_{\bx} \calH(\bx^{(t)},\blambda^{(t)}), \bx^{(t)} {-} \btx^{(t)}    \rangle
	\end{align*}
	Now we note that $ \nabla_{\blambda} \calH(\bx^{(t)},\blambda^{(t)}) = \nabla_{\blambda} \widetilde{\mathcal{L}}_t (\bx^{(t)},\blambda^{(t)})$ and $\nabla_{\bx} \calH(\bx^{(t)},\blambda^{(t)}) = \bp^{(t)}$. Using bounds from \eqref{eq:proofb_1} and \eqref{eq:proofb_2}:
	\begin{align} \label{eq:proofb_3}
		\calH(\bx^{(t)},\blambda) - \calH(\bx,\blambda^{(t)})
		& \leq \frac{1}{2\eta} \left( \Vert \bx - \btx^{(t)} \Vert^2 - \Vert \bx - \btx^{(t+1)} \Vert^2 \right) + \frac{1}{2 \eta} \left(   \verts{\blambda - \blambda^{(t)}  }^2 - \verts{\blambda - \blambda^{(t+1)}  }^2 \right) + \frac{1}{2\eta} \verts{\bx^{(t)} - \btx^{(t)} }^2  \notag  \\
		& \quad  + \frac{\eta}{2} \left( 2\Vert \bp^{(t)} \Vert^2  + \Vert \nabla_{\blambda}\widetilde{\mathcal{L}}_t(\bx^{(t)},\blambda^{(t)}) \Vert^2  \right) 
	\end{align}

Summing over $t$, and taking expectation yields:
\begin{align} \label{eq:proofb_4}
	\sum_{t=1}^{T} \mathbb{E}	\left[\calH(\bx^{(t)},\blambda) - \calH( \bx,\blambda^{(t)})\right]
	&  \leq 
	\frac{\eta}{2} \sum_{t=1}^{T} \mathbb{E} \left( 2\Vert \bp^{(t)} \Vert^2  + \Vert \nabla_{\blambda}\widetilde{\mathcal{L}}_t(\bx^{(t)},\blambda^{(t)}) \Vert^2  \right) + \frac{1}{2\eta} \sum_{t=1}^{T} \verts{\bx^{(t)} {-} \btx^{(t)} }^2 \notag \\ 
	& \quad + \frac{1}{2 \eta} \sum_{t=1}^{T} \mathbb{E}  \Vert \bx - \btx^{(1)} \Vert^2  
+ \frac{1}{2 \eta} \sum_{t=1}^{T} \mathbb{E} \left(  \verts{\blambda - \blambda^{(t)}  }^2 - \verts{\blambda - \blambda^{(t+1)}  }^2 \right) 
\end{align}
Using the fact $\blambda^{(1)} = \mathbf{0}$, and bounding the term $\verts{\bx - \btx^{(t)}}$ by $4R^2$ using assumption A.\ref{assump:X} completes the proof.

\end{proof}

\begin{fact*} [Restating Fact \ref{prop:bound_f_diff}]
	Under assumptions A.\ref{assump:convexity} and A.\ref{assump:gradient}, for all $t \in [T]$, $i \in [n]$ and any $\bu, \bv \in \mathcal{X}$, we have:
	\begin{align*}
		\bbE _{\bxi_i^{(t)}}[ f_i(\bu , \bxi_i^{(t)}) - f_i(\bv , \bxi_i^{(t)}) ]^2 \leq 4 G_i^2 \verts{\bu - \bv}^2 
	\end{align*}
	where $\bbE_{\bxi_i^{(t)}} [.]$ denotes expectation w.r.t. the sampling at time-step $t$ for the node $i$.
\end{fact*}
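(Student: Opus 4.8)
The plan is to mirror the argument already used in the proof of Fact~\ref{fact:F_diff_bound}, since the claimed inequality is a squared, per-sample Lipschitz-type bound whose deterministic core is identical; the only new ingredient is that here the bound is kept inside the expectation over $\bxi_i^{(t)}$ rather than collapsed onto $F$. First I would fix an arbitrary realization $\bxi_i^{(t)}$ and invoke the convexity of $f_i(\cdot, \bxi_i^{(t)})$ (Assumption~A.\ref{assump:convexity}) to write the two first-order inequalities
\begin{align*}
	f_i(\bu, \bxi_i^{(t)}) &\geq f_i(\bv, \bxi_i^{(t)}) + \langle \nabla_{\bx_i} f_i(\bv, \bxi_i^{(t)}), \bu - \bv \rangle, \\
	f_i(\bv, \bxi_i^{(t)}) &\geq f_i(\bu, \bxi_i^{(t)}) + \langle \nabla_{\bx_i} f_i(\bu, \bxi_i^{(t)}), \bv - \bu \rangle .
\end{align*}
Rearranging each one isolates the difference $f_i(\bu, \bxi_i^{(t)}) - f_i(\bv, \bxi_i^{(t)})$ from above and below, so that its absolute value is at most the larger of the two inner products; bounding that maximum by the sum and applying Cauchy--Schwarz to each term yields
\begin{align*}
	| f_i(\bu, \bxi_i^{(t)}) - f_i(\bv, \bxi_i^{(t)}) | \leq \left( \verts{\nabla_{\bx_i} f_i(\bu, \bxi_i^{(t)})} + \verts{\nabla_{\bx_i} f_i(\bv, \bxi_i^{(t)})} \right) \verts{\bu - \bv} .
\end{align*}

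Next I would square both sides and apply the elementary inequality $(a+b)^2 \leq 2(a^2 + b^2)$ to the gradient-norm factor, giving
\begin{align*}
	[ f_i(\bu, \bxi_i^{(t)}) - f_i(\bv, \bxi_i^{(t)}) ]^2 \leq 2\left( \verts{\nabla_{\bx_i} f_i(\bu, \bxi_i^{(t)})}^2 + \verts{\nabla_{\bx_i} f_i(\bv, \bxi_i^{(t)})}^2 \right) \verts{\bu - \bv}^2 .
\end{align*}
Finally I would take the expectation $\bbE_{\bxi_i^{(t)}}$ of both sides. Since $\bu, \bv \in \mathcal{X}$ are fixed and $\verts{\bu - \bv}^2$ is deterministic, the second-moment gradient bound~\eqref{assump:gradient_func} from Assumption~A.\ref{assump:gradient} applies to each of the two expected squared norms, bounding each by $G_i^2$. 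This produces the factor $2(G_i^2 + G_i^2) = 4G_i^2$ and delivers exactly the stated inequality.

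The argument is essentially routine, and I expect no genuine obstacle: the only point requiring care is bookkeeping of the constant to land precisely on $4G_i^2$, namely the factor $2$ from $(a+b)^2 \leq 2(a^2+b^2)$ multiplied against the two independent applications of the per-sample gradient bound. Notably, none of the bandit-specific machinery (the smoothing of $f_i$ over the sphere $\mathbb{S}$, the perturbation scale $\zeta$) enters here; this fact is a statement purely about the raw objectives $f_i$ and is later used with $\bu = \bx_i^{(t)} + \zeta \bu_i^{(t)}$, $\bv = \bx_i^{(t)}$ (and an analogous substitution) to control the smoothing error in \eqref{eq:interim_4} and \eqref{eq:interim_7}.
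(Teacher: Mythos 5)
Your proposal is correct and follows essentially the same route as the paper's own proof: two first-order convexity inequalities, Cauchy--Schwarz to get $| f_i(\bu,\bxi_i^{(t)}) - f_i(\bv,\bxi_i^{(t)}) | \leq ( \verts{\nabla_{\bx_i} f_i(\bu,\bxi_i^{(t)})} + \verts{\nabla_{\bx_i} f_i(\bv,\bxi_i^{(t)})} ) \verts{\bu-\bv}$, then squaring and taking expectation with the second-moment bound \eqref{assump:gradient_func}. Your explicit invocation of $(a+b)^2 \leq 2(a^2+b^2)$ merely spells out the bookkeeping the paper leaves implicit in its final step, so there is nothing further to fix.
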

\begin{proof}
	From Assumption A.\ref{assump:convexity}, the convexity of $f_i(\bx,\bxi)$ in $\bx$ and $\bxi$ gives:
	\begin{align*}
		& f_i(\bu  , \bxi_i^{(t)}) \geq f(\bv , \bxi_i^{(t)}) +  \langle \nabla_{\bx} f(\bv , \bxi_i^{(t)}), \bu - \bv \rangle \\
		& f_i(\bv, \bxi_i^{(t)}) \geq f(\bu, \bxi_i^{(t)}) + \langle \nabla_{\bx} f(\bu , \bxi_i^{(t)}), \bv - \bu \rangle
	\end{align*}
	The above two equations imply:
	\begin{align*}
		| f_i(\bu , \bxi_i^{(t)}) - f_i(\bv , \bxi_i^{(t)}) |  
		& \leq   \max \{  \langle \nabla_{\bx} f_i(\bu  , \bxi_i^{(t)}), \bu - \bv \rangle , - \langle \nabla_{\bx} f_i(\bv , \bxi_i^{(t)}), \bu - \bv \rangle   \} \\
		& \leq    \left( \verts{  \nabla_{\bx} f_i(\bu , \bxi_i^{(t)})  } + \verts{\nabla_{\bx} f_i(\bv , \bxi_i^{(t)})}   \right) \verts{\bu - \bv}
	\end{align*}
	where the second inequality follows from the Cauchy-Schwarz inequality.
	Taking the square of both sides in the above equation followed by taking the expectation over $\bxi$ and using \eqref{assump:gradient_func} from Assumption A.\ref{assump:gradient}, yields:
	\begin{align*}
		\bbE _{\bxi_i^{(t)}}[ f_i(\bu , \bxi_i^{(t)}) - f_i(\bv , \bxi_i^{(t)}) ]^2 \leq 4 G_i^2 \verts{\bu - \bv}^2  
	\end{align*}
\end{proof}

\begin{prop*} [Restating Proposition \ref{prop:interim_1}] For the update steps given in Algorithm \ref{algob}, under assumptions A.\ref{assump:convexity}-A.\ref{assump:g}, we have:
	\begin{enumerate} [(i)]
		\item $\mathbb{E}\verts{\bp^{(t)}}^2 \leq 4d^2(1+m)G^2 + 4(1+m)\tilde{G}^2 \mathbb{E} \verts{\blambda^{(t)}}^2$
		\item $\mathbb{E} \verts{\nabla_{\blambda} \widetilde{\mathcal{L}}_t(\bx^{(t)},\blambda^{(t)}) }^2 \leq 2C^2 + 2 \delta^2 \eta^2 \mathbb{E} \verts{\blambda^{(t)}}^2$
	\end{enumerate}
\end{prop*}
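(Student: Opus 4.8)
The plan is to establish the two bounds separately, observing that part (ii) essentially duplicates Fact~\ref{fact:grad_lagrangian_bound}(a) while part (i) carries the genuine new content arising from the two-point zeroth-order gradient estimate. For part (ii), I would first note that the dual variable enters $\widetilde{\mathcal{L}}$ exactly as it enters the sample-feedback Lagrangian, since $f$ and $\tilde{f}$ do not depend on $\blambda$; hence from \eqref{eq:bandit_vector_lagrangian} we have $\nabla_{\blambda}\widetilde{\mathcal{L}}_t(\bx^{(t)},\blambda^{(t)}) = \bg(\bx^{(t)}) - \delta\eta\blambda^{(t)}$. Applying $\verts{\mathbf{a}-\mathbf{b}}^2 \leq 2\verts{\mathbf{a}}^2 + 2\verts{\mathbf{b}}^2$ and then bounding $\verts{\bg(\bx^{(t)})}^2 = \sum_{i}\sum_{j\in\mathcal{N}_i} g_{ij}(\bx_i^{(t)},\bx_j^{(t)})^2 \leq C^2$ via Assumption~\ref{assump:g} immediately yields $2C^2 + 2\delta^2\eta^2\mathbb{E}\verts{\blambda^{(t)}}^2$, reproducing the argument of Fact~\ref{fact:grad_lagrangian_bound}(a) verbatim.

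For part (i), the key step is controlling the norm of the two-point estimate block. Writing $\bp_i^{(t)} = A_i^{(t)}\bu_i^{(t)} + 2\sum_{j\in\mathcal{N}_i}\lambda_{ij}^{(t)}\nabla_{\bx_i}g_{ij}(\bx_i^{(t)},\bx_j^{(t)})$ with $A_i^{(t)} = \frac{d}{2\zeta}[f_i(\bx_i^{(t)}+\zeta\bu_i^{(t)},\bxi_i^{(t)}) - f_i(\bx_i^{(t)}-\zeta\bu_i^{(t)},\bxi_i^{(t)})]$, I would first bound $\mathbb{E}\verts{A_i^{(t)}\bu_i^{(t)}}^2 = \mathbb{E}[(A_i^{(t)})^2]$, using that $\bu_i^{(t)}$ lies on the unit sphere. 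Invoking Fact~\ref{prop:bound_f_diff} with $\bu = \bx_i^{(t)}+\zeta\bu_i^{(t)}$ and $\bv = \bx_i^{(t)}-\zeta\bu_i^{(t)}$ (both in $\mathcal{X}$ by the construction of $\widetilde{\mathcal{X}}$), the expected squared two-point difference is at most $4G_i^2\verts{2\zeta\bu_i^{(t)}}^2 = 16G_i^2\zeta^2$, so the $\zeta^{-2}$ in $(A_i^{(t)})^2$ cancels and $\mathbb{E}[(A_i^{(t)})^2]\leq 4d^2 G_i^2$; summing over $i$ gives $\mathbb{E}\sum_{i}\verts{A_i^{(t)}\bu_i^{(t)}}^2 \leq 4d^2 G^2$ since $G^2 = \sum_i G_i^2$.

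Next I would assemble the full bound. Treating each $\bp_i^{(t)}$ as a sum of $1+|\mathcal{N}_i|\leq 1+m$ vectors, the inequality $(\sum_{k=1}^{K}a_k)^2 \leq K\sum_k a_k^2$ gives $\verts{\bp_i^{(t)}}^2 \leq (1+m)\big[\verts{A_i^{(t)}\bu_i^{(t)}}^2 + 4\sum_{j\in\mathcal{N}_i}(\lambda_{ij}^{(t)})^2\verts{\nabla_{\bx_i}g_{ij}}^2\big]$, where the factor $4 = 2^2$ comes from squaring the coefficient $2$ on the constraint gradients. Summing over $i$, bounding $\verts{\nabla_{\bx_i}g_{ij}}^2 \leq G_{ij}^2 \leq \tilde{G}^2$ through \eqref{assump:gradient_constraint} in Assumption~\ref{assump:gradient}, and recognizing $\sum_{i}\sum_{j\in\mathcal{N}_i}(\lambda_{ij}^{(t)})^2 = \verts{\blambda^{(t)}}^2$, I obtain $\mathbb{E}\verts{\bp^{(t)}}^2 \leq (1+m)\big[4d^2 G^2 + 4\tilde{G}^2\mathbb{E}\verts{\blambda^{(t)}}^2\big]$, which is the claim. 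The main obstacle is the first step of part (i): getting the $d^2$ scaling and the exact cancellation of $\zeta$ correct, since this is precisely where the zeroth-order estimator enters and where Fact~\ref{prop:bound_f_diff}—an in-expectation Lipschitz-type bound rather than a pointwise one—must be applied, together with the validity of the perturbation $\bx_i^{(t)}\pm\zeta\bu_i^{(t)}\in\mathcal{X}$; the remaining bookkeeping with the $(1+m)$ and factor-of-$4$ constants is routine.
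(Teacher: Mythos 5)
Your proof is correct and takes essentially the same route as the paper's: the same decomposition of $\bp_i^{(t)}$ into $1+|\mathcal{N}_i|$ vectors (yielding the $(1+m)$ factor and the squared coefficient $4$ on the constraint-gradient terms), the same cancellation of $\zeta$ producing $4d^2G_i^2$, and the identical two-term bound for $\nabla_{\blambda}\widetilde{\mathcal{L}}_t$ via Assumption~\ref{assump:g}. The only cosmetic difference is that you invoke Fact~\ref{prop:bound_f_diff} for the two-point difference (with the welcome explicit check that $\bx_i^{(t)} \pm \zeta\bu_i^{(t)} \in \mathcal{X}$, which the paper glosses over), whereas the appendix re-derives that same convexity estimate inline.
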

\begin{proof}
	(i) From the definition of $\bp_i^{(t)}$ in \eqref{eq:bandit_p}:
	\begin{align*}
		\bp_i^{(t)} &:= \frac{d}{2 \zeta} \left[ f_i(\bx_i^{(t)} + \zeta \bu_i^{(t)} , \bxi_i^{(t)} ) - f_i(\bx_i^{(t)} - \zeta \bu_i^{(t)} , \bxi_i^{(t)} )  \right] \bu_i^{(t)}+ 2 \sum_{j \in \mathcal{N}_i} \lambda_{ij}^{(t)} \nabla_{\bx_i} g_{ij} (\bx_i^{(t)},\bx_j^{(t)})
	\end{align*}
	Using the fact $\verts{\bu_i^{(t)}}_2=1$ for all $i \in [n]$ (as they lie on the unit sphere $\mathbb{S}$), we have:
	\begin{align} \label{eq:proof_16}
		 \verts{\bp_i^{(t)}}_2^2   & \leq 2 \verts{\frac{d}{2 \zeta} [ f_i(\bx_i^{(t)} {+} \zeta \bu_i^{(t)} , \bxi_i^{(t)} ) {-} f_i(\bx_i^{(t)} {-} \zeta \bu_i^{(t)} , \bxi_i^{(t)} )  ] }_2^2   + 4 \verts{\sum_{j \in \mathcal{N}_i} \lambda_{ij}^{(t)} \nabla_{\bx_i} g_{ij} (\bx_i^{(t)},\bx_j^{(t)})}_2^2 \notag \\
		& \leq (1{+}|\mathcal{N}_i|) \verts{\frac{d}{2 \zeta} [ f_i(\bx_i^{(t)} {+} \zeta \bu_i^{(t)} , \bxi_i^{(t)} ) {-} f_i(\bx_i^{(t)} {-} \zeta \bu_i^{(t)} , \bxi_i^{(t)} )  ] }_2^2   + 4 (1+|\mathcal{N}_i|) \sum_{j \in \mathcal{N}_i} \left(\lambda_{ij}^{(t)}\right)^2 \verts{ \nabla_{\bx_i} g_{ij} (\bx_i^{(t)},\bx_j^{(t)})}_2^2 
	\end{align}
We now bound the first term in the R.H.S. of \eqref{eq:proof_16}. Using the convexity of $f_i(\bx,\bxi)$ 
in $\bx$ and $\bxi$,
, we note that:
\begin{align*}
	f_i(\bx_i^{(t)} + \zeta \bu_i^{(t)} , \bxi_i^{(t)}) & \geq f(\bx_i^{(t)} - \zeta \bu_i^{(t)} , \bxi_i^{(t)})  + 2 \zeta \langle \nabla_{\bx} f(\bx_i^{(t)} - \zeta \bu_i^{(t)} , \bxi_i^{(t)}), \bu_i^{(t)} \rangle \\
	f_i(\bx_i^{(t)} - \zeta \bu_i^{(t)} , \bxi_i^{(t)}) & \geq f(\bx_i^{(t)} + \zeta \bu_i^{(t)} , \bxi_i^{(t)}) - 2 \zeta \langle \nabla_{\bx} f(\bx_i^{(t)} + \zeta \bu_i^{(t)} , \bxi_i^{(t)}), \bu_i^{(t)} \rangle
\end{align*}
The above two equations imply:
\begin{align*}
	| f_i(\bx_i^{(t)} + \zeta \bu_i^{(t)} , \bxi_i^{(t)}) - f_i(\bx_i^{(t)} - \zeta \bu_i^{(t)} , \bxi_i^{(t)}) |  & \leq 2 \zeta \max \{  -\langle \nabla_{\bx} f_i(\bx_i^{(t)} - \zeta \bu_i^{(t)} , \bxi_i^{(t)}), \bu_i^{(t)} \rangle , \langle \nabla_{\bx} f_i(\bx_i^{(t)} + \zeta \bu_i^{(t)} , \bxi_i^{(t)}), \bu_i^{(t)} \rangle   \} \\
	& \leq   2 \zeta \left( \verts{  \nabla_{\bx} f_i(\bx_i^{(t)} + \zeta \bu_i^{(t)} , \bxi_i^{(t)})  }  + \verts{\nabla_{\bx} f_i(\bx_i^{(t)} - \zeta \bu_i^{(t)} , \bxi_i^{(t)})}   \right) \verts{\bu_i^{(t)}}
\end{align*}
Taking the square of both sides in the above equation followed by taking the expectation over $\bxi$, using \eqref{assump:gradient_func} from Assumption \ref{assump:gradient} and the fact that $\bbE \|\bu_i^{(t)}\|^2 = 1$, yields:
\begin{align} \label{eq:proof_17}
	\bbE [ f_i(\bx_i^{(t)} + \zeta \bu_i^{(t)} , \bxi_i^{(t)}) - f_i(\bx_i^{(t)} - \zeta \bu_i^{(t)} , \bxi_i^{(t)}) ]^2 \leq 16 G_i^2 \zeta^2  
\end{align}
The bound for the second term in the R.H.S. of \eqref{eq:proof_16} can be given by using \eqref{assump:gradient_constraint} in Assumption \ref{assump:gradient} as:
\begin{align} \label{eq:proof_18}
	 4 \sum_{j \in \mathcal{N}_i} \left(\lambda_{ij}^{(t)}\right)^2 \verts{ \nabla_{\bx_i} g_{ij} (\bx_i^{(t)},\bx_j^{(t)})}_2^2 \leq 4 \sum_{j \in \mathcal{N}_i} \left(\lambda_{ij}^{(t)}\right)^2 G_{ij}^2  \leq 4 \widetilde{G}^2 \sum_{j \in \mathcal{N}_i} \left(\lambda_{ij}^{(t)}\right)^2
\end{align}
Plugging in the bounds from \eqref{eq:proof_17} and \eqref{eq:proof_18} in \eqref{eq:proof_16} and taking expectation gives:
\begin{align*}
	\bbE \verts{ \bp_i^{(t)} }^2 \leq (1+|\mathcal{N}_i|) \left[ 4 d^2G_i^2 + 4 \tilde{G}^2 \sum_{j \in \mathcal{N}_i} \bbE \left(\lambda_{ij}^{(t)}\right)^2 \right] 
\end{align*}
Summing over the values of $i$, and using the fact that $|\mathcal{N}_i| \leq m$ and $G^2 =  \sum_{i=1}^n G_i^2 $, we get:
\begin{align*}
	\bbE \verts{ \bp^{(t)} }^2 \leq (1+m) \left[ 4 d^2G^2 + 4 \tilde{G}^2 \bbE \verts{\blambda^{(t)}}^2 \right] 
\end{align*}
This completes the proof for (i) in Proposition \ref{prop:interim_1} \\

(ii)
From the definition of $\widetilde{\mathcal{L}}$ given in \eqref{eq:bandit_vector_lagrangian}, we have:
\begin{align*}
	\widetilde{\mathcal{L}}_t(\bx, \blambda) = \widetilde{f}(\bx,\bxi^{(t)} ) + \langle \blambda, \bg(\bx)  \rangle - \frac{\delta \eta}{2} \verts{ \blambda}_2^2
\end{align*}
Taking gradient w.r.t. $\blambda$ and using the triangle inequality gives:
\begin{align*}
	\verts{  \nabla_{\blambda} \widetilde{\mathcal{L}}_t(\bx^{(t)}, \blambda^{(t)})  } \leq \verts{\bg(\bx^{(t)})} + \delta \eta \verts{ \blambda^{(t)} }
\end{align*} 
Taking square of both sides and expectation yields:
\begin{align*}
	\bbE \verts{  \nabla_{\blambda} \widetilde{\mathcal{L}}_t(\bx^{(t)}, \blambda^{(t)})  }_2^2 \leq 2 \bbE \verts{\bg(\bx^{(t)})}_2^2 + 2\delta^2 \eta^2 \bbE \verts{ \blambda^{(t)} }_2^2
\end{align*}
Using Assumption \ref{assump:g} to bound the first term in the R.H.S. of above equation completes the proof for (ii) in Proposition \ref{prop:interim_1}.

\end{proof}

%

\begin{prop*}[Restating Proposition \ref{prop:interim_2}]
		For any $\blambda \in \mathbb{R}^{m} \text{ with } \blambda \succeq \mathbf{0}$, the updates of Algorithm \ref{algob} satisfy:
	\begin{align*}
		\sum_{t=1}^{T} \mathbb{E}	\left[\calH(\bx^{(t)},\blambda) - \calH((1-\alpha)\bx^{*}+\alpha \tilde{\by}_0,\blambda^{(t)})\right] = \sum_{t=1}^{T} \mathbb{E}	\left[\widetilde{\mathcal{L}}_t(\bx^{(t)},\blambda) - \widetilde{\mathcal{L}}_t((1-\alpha)\bx^{*}+\alpha \tilde{\by}_0,\blambda^{(t)})\right] 
	\end{align*}
	where $\bx^{*}$ is the optimal parameter value for the objective \eqref{eq:main_obj}, and $\mathcal{H}$, $\widetilde{\mathcal{L}}$ are defined in \eqref{eq:bandit_vector_H} and \eqref{eq:bandit_vector_lagrangian} respectively.
\end{prop*}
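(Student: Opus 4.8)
The plan is to use the single structural fact that, by its very definition \eqref{eq:bandit_vector_H}, the surrogate $\calH$ differs from the smoothed Lagrangian $\widetilde{\mathcal{L}}_t$ only by a term that is \emph{affine} (indeed linear) in the primal variable. Reading \eqref{eq:bandit_vector_H} as a function of a free argument $\bx$, we have $\calH(\bx,\blambda)=\widetilde{\mathcal{L}}_t(\bx,\blambda)+\langle \bc^{(t)},\bx\rangle$ with the \emph{point-independent} coefficient $\bc^{(t)}:=\bp^{(t)}-\nabla_{\bx}\widetilde{\mathcal{L}}_t(\bx^{(t)},\blambda^{(t)})$; this is precisely the construction that forces $\nabla_{\bx}\calH(\bx^{(t)},\blambda^{(t)})=\bp^{(t)}$ while leaving $\nabla_{\blambda}\calH=\nabla_{\blambda}\widetilde{\mathcal{L}}_t$ untouched. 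Crucially, $\bc^{(t)}$ depends on the round-$t$ iterate and the sampled gradient but not on the argument at which $\calH$ is evaluated.

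First I would substitute the two evaluation points into this expression. Because the added term is linear in $\bx$ and independent of $\blambda$, the $\widetilde{\mathcal{L}}_t$ parts line up exactly and the difference telescopes to
\begin{align*}
\calH(\bx^{(t)},\blambda)-\calH\big((1{-}\alpha)\bx^{*}{+}\alpha\tilde{\by}_0,\blambda^{(t)}\big) &= \widetilde{\mathcal{L}}_t(\bx^{(t)},\blambda)-\widetilde{\mathcal{L}}_t\big((1{-}\alpha)\bx^{*}{+}\alpha\tilde{\by}_0,\blambda^{(t)}\big) \\
&\quad + \big\langle \bc^{(t)},\; \bx^{(t)}-\big((1{-}\alpha)\bx^{*}{+}\alpha\tilde{\by}_0\big)\big\rangle .
\end{align*}
Summing over $t$ and taking expectations, the claimed identity is equivalent to showing that the accumulated cross terms vanish, i.e. $\sum_{t=1}^{T}\bbE\big[\langle \bc^{(t)},\,\bx^{(t)}-((1{-}\alpha)\bx^{*}{+}\alpha\tilde{\by}_0)\rangle\big]=0$.

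The heart of the argument is a conditional-unbiasedness (tower-property) computation. Let $\mathcal{F}_t$ be the $\sigma$-algebra generated by all randomness up to and including the sample $\bxi^{(t)}$ of round $t$ but \emph{before} the perturbation directions $\{\bu_i^{(t)}\}_{i=1}^{n}$ are drawn. Since $\bp^{(t)}$ is built from the symmetric two-point queries \eqref{eq:bandit_p}, Fact~\ref{fact:bandit_interim1}(ii) together with the symmetry of the sphere gives $\bbE[\bp^{(t)}\mid\mathcal{F}_t]=\nabla_{\bx}\widetilde{\mathcal{L}}_t(\bx^{(t)},\blambda^{(t)})$, hence $\bbE[\bc^{(t)}\mid\mathcal{F}_t]=\mathbf{0}$. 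The iterate $\bx^{(t)}$ is formed in lines~2--8 of Algorithm~\ref{algob} --- before any round-$t$ perturbation --- so it is $\mathcal{F}_t$-measurable, and the comparison point $(1{-}\alpha)\bx^{*}{+}\alpha\tilde{\by}_0$ is deterministic; pulling these out of the conditional expectation yields $\bbE[\langle\bc^{(t)},\bx^{(t)}-((1{-}\alpha)\bx^{*}{+}\alpha\tilde{\by}_0)\rangle]=\langle\mathbf{0},\cdot\rangle=0$ term by term. Summing closes the proof.

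I expect the only real obstacle to be the measurability bookkeeping: one must verify that $\bp^{(t)}$ is unbiased \emph{conditionally} on the current primal/dual iterate (not merely on average) and that $\bx^{(t)},\blambda^{(t)}$ are measurable with respect to the conditioning $\sigma$-algebra, which hinges on the ordering of operations in Algorithm~\ref{algob} (the constraint-gradient part of $\bp^{(t)}$ uses only $\mathcal{F}_t$-measurable quantities, while the stochastic part has conditional mean equal to the smoothed primal gradient). Everything else is a direct expansion exploiting that $\calH-\widetilde{\mathcal{L}}_t$ is linear in $\bx$.
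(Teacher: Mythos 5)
Your proof is correct and is essentially the paper's own argument: you expand $\calH-\widetilde{\mathcal{L}}_t$ as a term linear in the primal argument with coefficient $\bp^{(t)}-\nabla_{\bx}\widetilde{\mathcal{L}}_t(\bx^{(t)},\blambda^{(t)})$, and then annihilate the resulting cross term via the tower property, using the conditional unbiasedness $\bbE[\bp^{(t)}\mid \bx^{(t)},\bxi^{(t)},\blambda^{(t)}]=\nabla_{\bx}\widetilde{\mathcal{L}}_t(\bx^{(t)},\blambda^{(t)})$ (from Fact~\ref{fact:bandit_interim1} and sphere symmetry) together with the measurability of $\bx^{(t)}$ with respect to the conditioning, exactly as in the paper's proof in Appendix~\ref{sec:app_bandit}. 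The only point the paper makes explicit that you leave implicit is the dual symmetry $\lambda_{ij}^{(t)}=\lambda_{ji}^{(t)}$, which is what makes the deterministic part $2\sum_{j\in\mathcal{N}_i}\lambda_{ij}^{(t)}\nabla_{\bx_i}g_{ij}(\bx_i^{(t)},\bx_j^{(t)})$ of $\bp_i^{(t)}$ coincide with the constraint portion of $\nabla_{\bx_i}\widetilde{\mathcal{L}}_t$ in \eqref{eq:bandit_lagrangian_primal_interim1}.
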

\begin{proof}
	From the definition of $\calH(\bx,\blambda)$ given in \eqref{eq:bandit_vector_H}, we have:
	\begin{align} \label{eq:interim_1}
		\sum_{t=1}^{T} \mathbb{E}	\left[\calH(\bx^{(t)},\blambda) - \calH((1-\alpha)\bx^{*}+\alpha \tilde{\by}_0,\blambda^{(t)})\right] 
		 & = \sum_{t=1}^{T} \mathbb{E}	\left[\widetilde{\mathcal{L}}_t(\bx^{(t)},\blambda) - \widetilde{\mathcal{L}}_t((1-\alpha)\bx^{*}+\alpha \tilde{\by}_0,\blambda^{(t)})\right] \notag \\
		& + \sum_{t=1}^{T} \bbE \left[  \left\langle \bp^{(t)} {-}  \nabla_{\bx} \widetilde{\mathcal{L}}_t (\bx^{(t)}, \blambda^{(t)}) , \bx^{(t)} {-} (1-\alpha)\bx^{*} {-} \alpha \tilde{\by}_0  \right\rangle \right]
	\end{align}
We now focus on the last term in the R.H.S. of equation \eqref{eq:interim_1}. From the definition of $\bp_i$ given in \eqref{eq:bandit_p}, we have:
\begin{align} \label{eq:interim_2}
	\bp_i^{(t)} & = \frac{d}{2 \zeta} \left[ f_i(\bx_i^{(t)} + \zeta \bu_i^{(t)} , \btheta_i^{(t)}) - f_i(\bx_i^{(t)} - \zeta \bu_i^{(t)} , \btheta_i^{(t)} ) \right] \bu_i^{(t)}  + 2 \sum_{j \in \mathcal{N}_i} \lambda_{ij}^{(t)} \nabla_{\bx_i} g_{ij}(\bx_i^{(t)}, \bx_j^{(t)})
\end{align}
For a given $\bx_i^{(t)}, \btheta_i^{(t)}$, the gradient of the function $\widetilde{f}(\bx_i^{(t)}, \btheta_i^{(t)}) := \bbE_{\bu_i^{(t)} \sim \mathcal{U} (\mathbb{S})} \left[ f_i(\bx_i + \zeta \bu_i^{(t)}, \btheta_i^{(t)}) \right] $ (where $\mathcal{U}(\mathbb{S})$ denotes the uniform distribution on the unit sphere $\mathbb{S}$)  w.r.t. $\bx_i^{(t)}$  for $\zeta >0$ using Fact \ref{fact:bandit_interim1} is:
\begin{align} \label{eq:interim_3}
	\nabla_{\bx_i} \widetilde{f}_i(\bx_i^{(t)}, \btheta_i^{(t)}) & = \frac{d}{2 \zeta} \bbE_{\bu_i^{(t)}} \left[ \left(f_i(\bx_i^{(t)} + \zeta \bu_i^{(t)} , \btheta_i^{(t)} )  - f_i(\bx_i^{(t)} - \zeta \bu_i^{(t)} , \btheta_i^{(t)} )\right) \bu_i^{(t)}   \right]
\end{align}
Taking the expectation w.r.t. $\bu_i^{(t)}$ in \eqref{eq:interim_2} and substituting \eqref{eq:interim_3} in it gives:
\begin{align*}
	\bbE_{\bu_i^{(t)}} \left[\bp_i^{(t)}\right] &= \nabla_{\bx_i}\widetilde{f}_i(\bx_i^{(t)}, \btheta_i^{(t)}) + 2 \sum_{j \in \mathcal{N}_i} \lambda_{ij}^{(t)} \nabla_{\bx_i} g_{ij}(\bx_i^{(t)}, \bx_j^{(t)}) \stackrel{(a)}{=} \nabla_{\bx_i} \widetilde{\mathcal{L}}_t (\bx^{(t)} , \btheta^{(t)})
\end{align*}
where (a) follows from \eqref{eq:bandit_lagrangian_primal_interim1} and the symmetry of dual variables, i.e., $\lambda_{ij}^{(t)} = \lambda_{ij}^{(t)}$ for all $t \in [T]$ and for all $i \in[n], j \in \mathcal{N}_i$. Stacking the vectors $\bbE_{\bu_i^{(t)}} [\bp_i^{(t)}]$ for $i \in [n]$, the expectation of vector $\bp^{(t)}$ over $\{ \bu_i^{(t)} \}_{i=1}^{n}$ is given as:
\begin{align*}
	\bbE_{\bu_1^{(t)} , \hdots , \bu_n^{(t)}} [ \bp^{(t)} ] = \bbE \left[\bp^{(t)} | \bx^{(t)}, \btheta^{(t)}, \blambda^{(t)} \right] = \nabla_{\bx} \widetilde{\mathcal{L}}_t (\bx^{(t)} , \btheta^{(t)})
\end{align*}
Using the equality in above, we note that for any $t \in [T]$
\begin{align*}
	& \bbE \left[  \left\langle \bp^{(t)} -  \nabla_{\bx} \widetilde{\mathcal{L}}_t (\bx^{(t)}, \blambda^{(t)}) , \bx^{(t)} - (1-\alpha)\bx^{*} - \alpha \tilde{\by}_0  \right\rangle \right]\\
	 & = 
	\bbE \left[  \left\langle \bbE [\bp^{(t)} | \bx^{(t)}, \btheta^{(t)}, \blambda^{(t)}] -  \nabla_{\bx} \widetilde{\mathcal{L}}_t (\bx^{(t)}, \blambda^{(t)}) ,  \bx^{(t)} - (1-\alpha)\bx^{*} - \alpha \tilde{\by}_0  \right\rangle \right] \\
	& = 0
\end{align*}
Plugging the above result in \eqref{eq:interim_1} completes the proof.
\end{proof}

\begin{lemma*} [Restating Lemma \ref{lem:e_tb}]  
		Consider the error $\be^{(t)} := \bx^{(t)} - \btx^{(t)}$ for any $t\in[T]$. We have:
	\begin{align*}
		\mathbb{E}\Vert \be^{(t)}\Vert^2 \leq \frac{2 \eta^2}{\omega} \sum_{k=0}^{t-2} \left(  1 - \frac{\omega}{2} \right)^k \mathbb{E}\verts{\bp^{(t-1)}}^2
	\end{align*}
\end{lemma*}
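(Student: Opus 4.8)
The plan is to mirror the proof of Lemma~\ref{lem:e_t} from the sample-feedback case, since the error-feedback bookkeeping in lines 2--6 of Algorithm~\ref{algob} is structurally identical to that of Algorithm~\ref{algo}. The only substantive changes are that the primal descent direction is now the bandit gradient estimate $\bp^{(t)}$ in place of $\nabla_\bx \mathcal{L}(\bx^{(t)},\blambda^{(t)})$, and that all projections are onto $\widetilde{\mathcal{X}}^n$ rather than $\mathcal{X}^n$. First I would reduce the claim to a bound on the copy error. Since $\bx^{(t)} = \projXtn(\bhx^{(t)})$ and $\btx^{(t)} \in \widetilde{\mathcal{X}}^n$, Fact~\ref{fact:proj_bound} gives $\verts{\be^{(t)}} = \verts{\btx^{(t)} - \projXtn(\bhx^{(t)})} \le \verts{\btx^{(t)} - \bhx^{(t)}}$. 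Defining $S^{(t)} := \mathbb{E}\verts{\btx^{(t)} - \bhx^{(t)}}^2$, it then suffices to establish the desired recursion for $S^{(t)}$, because $\mathbb{E}\verts{\be^{(t)}}^2 \le S^{(t)}$.

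Next I would unroll one step of the copy update $\bhx^{(t)} = \bhx^{(t-1)} + \mathcal{C}(\btx^{(t)} - \bhx^{(t-1)})$. The compression property \eqref{eq:comp_op} yields $S^{(t)} \le (1-\omega)\,\mathbb{E}\verts{\btx^{(t)} - \bhx^{(t-1)}}^2$. Substituting the primal update $\btx^{(t)} = \projXtn(\btx^{(t-1)} - \eta \bp^{(t-1)})$ and inserting $\pm\,\btx^{(t-1)}$, I split the resulting norm into the previous copy error and the projected gradient step through the elementary inequality $\verts{a+b}^2 \le (1+\alpha)\verts{a}^2 + (1+1/\alpha)\verts{b}^2$. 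Choosing $\alpha = \omega/2$, the contraction factor becomes $(1-\omega)(1+\omega/2) \le 1 - \omega/2$, and a second application of Fact~\ref{fact:proj_bound} (now using $\btx^{(t-1)} \in \widetilde{\mathcal{X}}^n$) bounds the step term by $\eta^2 \verts{\bp^{(t-1)}}^2$ with coefficient $(1-\omega)(1+2/\omega) \le 2/\omega$. This gives the one-step recursion $S^{(t)} \le (1-\tfrac{\omega}{2})\,S^{(t-1)} + \tfrac{2\eta^2}{\omega}\,\mathbb{E}\verts{\bp^{(t-1)}}^2$.

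Finally I would unroll this recursion down to the base case $S^{(1)} = 0$, which holds because the first round communicates without compression so that $\btx^{(1)} = \bhx^{(1)}$. Geometric accumulation then yields $S^{(t)} \le \frac{2\eta^2}{\omega}\sum_{k=0}^{t-2}(1-\tfrac{\omega}{2})^k\,\mathbb{E}\verts{\bp^{(t-1-k)}}^2$ (the summand carrying the index $t-1-k$, as in the main-text statement of the lemma), and combining with $\mathbb{E}\verts{\be^{(t)}}^2 \le S^{(t)}$ concludes the argument. No step here is genuinely hard, as the whole chain parallels Lemma~\ref{lem:e_t}; the only point that demands care is ensuring every projection and feasibility argument is phrased over $\widetilde{\mathcal{X}}^n$ (so that $\btx^{(t-1)} \in \widetilde{\mathcal{X}}^n$ may legitimately be fed into Fact~\ref{fact:proj_bound}) and faithfully tracking the replacement of $\nabla_\bx\mathcal{L}$ by $\bp^{(t)}$, which is precisely what distinguishes the bandit case from the sample case.
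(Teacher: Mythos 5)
Your proposal is correct and coincides with the paper's own argument: the paper proves Lemma~\ref{lem:e_tb} by simply invoking the proof of Lemma~\ref{lem:e_t} with $\nabla_{\bx}\mathcal{L}_t(\bx^{(t)},\blambda^{(t)})$ replaced by $\bp^{(t)}$ and all projections taken onto $\widetilde{\mathcal{X}}^n$, which is exactly the chain of steps (reduction to the copy error $S^{(t)}$ via Fact~\ref{fact:proj_bound}, one-step contraction with $\alpha=\omega/2$, unrolling from $S^{(1)}=0$) you spell out. You also correctly read the summand as $\mathbb{E}\verts{\bp^{(t-1-k)}}^2$, matching the main-text statement rather than the appendix restatement's typo.
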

\begin{proof}
	The proof is very similar to that of Lemma~\ref{lem:e_t} in Appendix~\ref{sec:app_feedback}, with the only difference being that $\nabla_{\bx}\mathcal{L}_{t}(\bx^{(t)},\blambda^{(t)} )$ is replaced by $\bp^{(t)}$ and we instead project on the set $\widetilde{\mathcal{X}}^{n}$.
\end{proof}

\end{document}